\documentclass[12pt]{article}

\usepackage[a4paper,margin=3cm]{geometry}

\usepackage[T1]{fontenc}
\usepackage[utf8]{inputenc}
\usepackage{lmodern}
\usepackage{microtype}

\usepackage{amsmath,amssymb,amsthm,mathtools,mathrsfs}

\DeclareMathOperator{\dist}{dist}

\usepackage{graphicx}
\usepackage{enumitem}

\usepackage[hidelinks]{hyperref}
\usepackage{xurl}

\usepackage[numbers,sort&compress]{natbib}
\theoremstyle{plain}
\newtheorem{theorem}{Theorem}[section]
\newtheorem{lemma}[theorem]{Lemma}
\newtheorem{proposition}[theorem]{Proposition}
\newtheorem{corollary}[theorem]{Corollary}

\theoremstyle{definition}
\newtheorem{definition}[theorem]{Definition}

\theoremstyle{definition}
\newtheorem{problem}[theorem]{Problem}

\theoremstyle{remark}
\newtheorem{remark}[theorem]{Remark}

\newcommand{\C}{\mathbb{C}}
\newcommand{\R}{\mathbb{R}}
\newcommand{\N}{\mathbb{N}}
\newcommand{\supp}{\operatorname{supp}}
\newcommand{\diag}{\operatorname{diag}}
\newcommand{\Res}{\operatorname*{Res}}
\newcommand{\Ai}{\operatorname{Ai}}

\newcommand{\Van}{V^{\mathrm{an}}}
\newcommand{\ph}{\varphi}
\newcommand{\ellKKT}{\ell}
\newcommand{\wA}{\omega_A}

\begin{document}

\title{Multipoint Pad\'e Approximation of the Hurwitz Zeta Function\\
and a Riemann--Hilbert Steepest Descent Analysis}
\author{Artur Kandaian}
\date{09.02.2026}

\maketitle

\vspace{1.2em}

\begin{flushright}
{\small\itshape
In memory of my teacher, Prof.\ V.\,N.~Sorokin\\
(Dr.\ Sci.\ in Physics and Mathematics).}
\end{flushright}

\vspace{0.2em}

\maketitle

\begin{abstract}
We study multipoint Pad\'e approximants of type $(n,n)$ for the Hurwitz zeta function
$f(a)=\zeta(s,a)$ with $\Re s>1$, constructed at quantile nodes $a_{n,j}=n\alpha_{n,j}$ generated by a
real-analytic density $\kappa$ on $[A,B]\Subset(0,\infty)$.
Under the determinantal nondegeneracy condition $\mathrm{(ND)}_n$ for large $n$ and in the regular one-cut
soft-edge regime of the associated constrained equilibrium problem, we formulate the approximation as a
matrix Riemann--Hilbert problem with poles and carry out a Deift--Zhou nonlinear steepest descent analysis.
We construct an explicit outer parametrix together with Airy-type local parametrices at the endpoints and
reduce the problem to a small-norm Riemann--Hilbert problem with uniform $O(1/n)$ control.
As a consequence, the Pad\'e numerator and denominator admit strong asymptotics uniformly on compact subsets
of $\C\setminus[A,B]$, and exhibit Airy scaling in $O(n^{-2/3})$ neighborhoods of the edges.
\end{abstract}

\paragraph{Keywords.}
multipoint Pad\'e approximation; Hurwitz zeta function; quantile nodes; strong asymptotics;
Riemann--Hilbert problem; Deift--Zhou steepest descent; constrained equilibrium measure; Airy parametrix.

\subsection{Setup and statement of the main theorem}
\label{subsec:setup-main-thm}

Fix $s\in\C$ with $\Re s>1$ and let $\kappa$ be real-analytic and strictly positive on a neighborhood of
$[A,B]\Subset(0,\infty)$, normalized by $\int_A^B\kappa(x)\,dx=2$.
Let the nodes be $a_{n,j}=n\alpha_{n,j}$, where the quantiles $\alpha_{n,j}\in[A,B]$ are defined by
\begin{equation}
\label{eq:quantiles}
\int_A^{\alpha_{n,j}}\kappa(t)\,dt=\frac{j}{n},\qquad j=0,1,\dots,2n,
\end{equation}
and set $f(a)=\zeta(s,a)$.

We consider multipoint Pad\'e approximants of type $(n,n)$ for $f$ at the node set $\{a_{n,j}\}_{j=0}^{2n}$.
For each fixed $n$, solvability and uniqueness (under the normalization fixed below) are ensured by the
finite-dimensional nondegeneracy condition $\mathrm{(ND)}_n$ from Definition~\ref{def:NDn}.
In particular, whenever $\mathrm{(ND)}_n$ holds, the Pad\'e pair $(P_n,Q_n)$ is uniquely determined by the data,
and the normalization \eqref{eq:Y-normalization} fixes the remaining scalar freedom.

\paragraph{Normalization and recovery of $(P_n,Q_n)$.}
We normalize the pole RHP for $Y$ by
\begin{equation}
\label{eq:Y-normalization}
Y(z)\,z^{-n\sigma_3}\to I,\qquad z\to\infty,
\end{equation}
where $z^{\pm n}$ uses the principal branch. With this convention, the first row of $Y$ consists of
polynomials, and in particular
\begin{equation}
\label{eq:recover-PQ}
Q_n(z)=Y_{11}(z),\qquad P_n(z)=Y_{12}(z),
\end{equation}
so that $P_n/Q_n$ is the $[n/n]$ multipoint Pad\'e approximant (up to the overall scalar normalization
fixed by \eqref{eq:Y-normalization}).

\subsection{Outline of the Deift--Zhou scheme}
\label{subsec:outline}

The analysis is based on a sequence of explicit transformations of the pole Riemann--Hilbert problem.
Starting from the pole RHP for $Y$ (Section~\ref{sec:rhp-poles}), we remove poles by local
triangular multipliers to obtain a pure jump problem $T$ (Section~\ref{subsec:pole-removal}), and then
compress the resulting collection of small-circle jumps onto a single contour $\Gamma_n$
(Section~\ref{subsec:choice-Gamma}), which produces a triangular jump involving the barycentric function
$W_n$ (Section~\ref{subsec:compression-barycentric}).
Using the representation $W_n=L_n/\omega_n$ and the subexponential control of $L_n$ (Section~\ref{sec:Ln-control}),
we factor the compressed jump in terms of the analytic field $\Van$ and an error factor $\mathcal E_n=\exp(o(n))$
(Section~\ref{subsec:factorization-Wn}). We then introduce the constrained equilibrium $g$-function and the phase
$\ph$ (Sections~\ref{sec:equilibrium}--\ref{sec:g-function}), perform the $g$-transformation and open lenses
(Section~\ref{sec:g-and-lens}) so that the jumps on the lens lips become exponentially close to $I$ away from the
endpoint disks. The remaining model problem is solved by the outer parametrix $N$ together with Airy
parametrices near the soft edges $c$ and $d$ (Section~\ref{sec:parametrices}), matched with $O(1/n)$ accuracy.
Finally, the ratio function $R$ satisfies a small-norm RHP (Section~\ref{sec:final-ratio}), yielding
\eqref{eq:R-small-norm} and hence the stated strong asymptotics for $Y$ after reverting the transformations.

\section{Introduction}
\label{sec:intro}

We study multipoint Pad\'e approximants for the Hurwitz zeta function $f(a)=\zeta(s,a)$, $\Re s>1$,
built on quantile nodes $a_{n,j}=n\alpha_{n,j}$ generated by a positive real-analytic density $\kappa$ on
$[A,B]\Subset(0,\infty)$.
Throughout, $\log(\cdot)$ denotes the principal branch with a cut along $(-\infty,0]$, and boundary values
across real cuts are taken in the standard Sokhotski--Plemelj sense.

\paragraph{Notation and conventions.}
We use the scaled variable $\zeta=z/n$ (so $z=n\zeta$) and work primarily in the $\zeta$-plane.
We use $z$ as the complex variable throughout; the original real parameter $a$ is viewed as $a=z$ when passing to analytic continuation.
For $K\Subset\C\setminus[A,B]$ we always mean a compact set separated from $[A,B]$ by a positive distance.
All exponential estimates on the lens lips are understood on $\Sigma_{\mathrm{lens}}\setminus(U_c\cup U_d)$,
i.e., outside the endpoint disks.
Unless stated otherwise, boundary values $(\cdot)_\pm$ are taken with respect to the chosen contour orientation,
and all right-multipliers act on the right.
The branches of $g$ and $\varphi$ are fixed consistently with the principal branch of $\log$ and with the KKT relations.

Pad\'e-type approximation problems for the Hurwitz zeta function (in a different regime, namely at infinity) 
were considered, for instance, in~\cite{RivoalZeta4Hurwitz}.

We formulate the approximation problem via a $2\times2$ pole Riemann--Hilbert problem and apply the
Deift--Zhou steepest descent method after a sequence of explicit transformations (pole removal, contour
compression, $g$-transformation, and lens opening). For a representative Riemann--Hilbert steepest 
descent analysis of Hermite--Pad\'e type problems (in particular, a $3\times3$ RHP), 
see~\cite{KuijlaarsVanAsscheWielonsky} and \cite{KuijlaarsSurvey,DeiftBook}. The key analytic inputs are the quantile discretization
and subexponential control of the interpolation polynomial.

\paragraph{Main objects.}
For the reader's convenience, we list the principal quantities used throughout:
\begin{enumerate}[label=\textup{(\roman*)}, leftmargin=2.2em]
\item quantile nodes $a_{n,j}=n\alpha_{n,j}$ and the node polynomial $\omega_n(a)=\prod_{j=0}^{2n}(a-a_{n,j})$;
\item the multipoint Pad\'e pair $(P_n,Q_n)$ and the discrete weights $w_{n,j}=f(a_{n,j})/\omega_n'(a_{n,j})$;
\item the pole RHP for $Y$ and the sequence of transforms $Y\mapsto T\mapsto S\mapsto R$;
\item the $g$-function and phase $\varphi$ determined by the constrained equilibrium measure $\mu_*$;
\item the parametrices: the outer model $N$ and Airy local parametrices $P^{(c)},P^{(d)}$ near the endpoints.
\end{enumerate}

Pad\'e-type constructions for special functions also play a role in arithmetic questions; see, e.g.,
Sorokin's alternative proof of the Zudilin--Rivoal theorem for values of the Dirichlet beta function~\cite{SorokinZudilinRivoal}.

Related work on multipoint Pad\'e / Hermite--Pad\'e approximation to special functions
includes multipoint Pad\'e approximants to the digamma function \cite{S-mzm13097},
as well as multipoint Hermite--Pad\'e constructions for systems of beta functions and their asymptotic
analysis \cite{KS-mzm8589,KS-mzm10993,S-mmo611}.
Further developments of the Riemann--Hilbert steepest descent approach for related vector/matrix
approximation problems can be found, for instance, in the study of Hermite--Pad\'e approximants to Weyl-type
functions and their derivatives \cite{S-sm8634}.

For background on the convergence theory of Pad\'e approximants and its links to orthogonality and potential theory,
see, e.g.,~\cite{AptekarevBuslaevMFSuetin}.
For an overview of Hermite--Pad\'e approximation, multiple orthogonality, and related Riemann--Hilbert methods,
see, e.g.,~\cite{AptekarevKuijlaars}.

\section{Multipoint Pad\'e approximation and discrete orthogonality}
\label{sec:pade}

\subsection{Definition of the multipoint Pad\'e problem}
\label{subsec:pade-def}

Let $f$ be a function defined at the distinct nodes $\{a_{n,j}\}_{j=0}^{2n}\subset\C$.
A \emph{multipoint Pad\'e approximant of type $(n,n)$} for $f$ at these nodes is a rational function
$P_n/Q_n$ with
\[
\deg P_n\le n,\qquad \deg Q_n\le n,\qquad Q_n\not\equiv 0,
\]
such that
\begin{equation}
\label{eq:pade-mp-conditions}
(Q_n f - P_n)(a_{n,j})=0,\qquad j=0,1,\dots,2n.
\end{equation}

Multipoint Pad\'e approximation to other special functions (notably the digamma function) was considered in \cite{S-mzm13097}.

Equivalently, with the node polynomial
\begin{equation}
\label{eq:omega-n-def}
\omega_n(a):=\prod_{j=0}^{2n}(a-a_{n,j}),
\end{equation}
there exists a function $\Psi_n$ (a priori depending on $n$) such that
\begin{equation}
\label{eq:pade-divisibility}
Q_n(a)f(a)-P_n(a)=\omega_n(a)\,\Psi_n(a),
\end{equation}
in the sense that \eqref{eq:pade-divisibility} holds pointwise at the nodes and hence
$Q_nf-P_n$ vanishes to first order at each $a_{n,j}$.

\begin{remark}
The pair $(P_n,Q_n)$ is defined only up to an overall nonzero scalar factor. This scalar is fixed by
the RHP normalization \eqref{eq:Y-normalization}.
\end{remark}

\subsection{Discrete orthogonality and barycentric weights}
\label{subsec:discrete-orth}

Define the barycentric weights
\begin{equation}
\label{eq:weights-def}
w_{n,j}:=\frac{f(a_{n,j})}{\omega_n'(a_{n,j})},\qquad j=0,1,\dots,2n.
\end{equation}

\begin{lemma}[Discrete orthogonality]
\label{lem:discrete-orth}
Let $(P_n,Q_n)$ satisfy \eqref{eq:pade-mp-conditions}. Then for every polynomial $p$ with $\deg p\le n-1$,
\begin{equation}
\label{eq:discrete-orth}
\sum_{j=0}^{2n} Q_n(a_{n,j})\,p(a_{n,j})\,\frac{f(a_{n,j})}{\omega_n'(a_{n,j})}=0,
\end{equation}
i.e.,
\[
\sum_{j=0}^{2n} Q_n(a_{n,j})\,p(a_{n,j})\,w_{n,j}=0.
\]
\end{lemma}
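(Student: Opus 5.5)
The plan is to use the Lagrange interpolation identity for the divided difference, also known as the residue/partial-fraction identity. The key fact is that for any polynomial $h$ with $\deg h\le 2n-1$ and the $2n+1$ distinct nodes $a_{n,j}$,
\[
\sum_{j=0}^{2n}\frac{h(a_{n,j})}{\omega_n'(a_{n,j})}=0 .
\]
The left side is the leading coefficient (the coefficient of $a^{2n}$) of the Lagrange interpolant of $h$ at the $2n+1$ nodes. That interpolant equals $h$ itself, since $\deg h\le 2n$, and $h$ has zero coefficient at $a^{2n}$. Alternatively, one can view the sum as $\sum_j\operatorname{Res}_{z=a_{n,j}} h(z)/\omega_n(z)$. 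This equals minus the residue at infinity, which vanishes because $h/\omega_n=O(z^{-2})$.

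Next, rewrite the sum in \eqref{eq:discrete-orth} using the interpolation conditions \eqref{eq:pade-mp-conditions}. At each node $Q_n(a_{n,j})f(a_{n,j})=P_n(a_{n,j})$, so
\[
\sum_{j=0}^{2n} Q_n(a_{n,j})\,p(a_{n,j})\,\frac{f(a_{n,j})}{\omega_n'(a_{n,j})}
=\sum_{j=0}^{2n}\frac{P_n(a_{n,j})\,p(a_{n,j})}{\omega_n'(a_{n,j})}.
\]
The polynomial $h:=P_np$ has degree at most $n+(n-1)=2n-1$. The identity above therefore shows the sum vanishes, which proves the lemma.

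There is no real obstacle here. The only point needing care is the degree count $\deg(P_np)\le 2n-1<2n$, which is exactly where the restriction $\deg p\le n-1$ is used. One also needs the nodes to be distinct so that $\omega_n'(a_{n,j})\neq0$. Distinctness holds because $\kappa>0$ makes the quantiles \eqref{eq:quantiles} strictly increasing in $j$.

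Note that the function $f$ enters only through its nodal values, so no analyticity of $\zeta(s,\cdot)$ is needed.
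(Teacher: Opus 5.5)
Your proof is correct and is essentially the paper's argument: the paper writes $G=Q_nfp/\omega_n$ as $P_np/\omega_n+pF$ with $pF$ analytic at the nodes, which is your substitution $Q_n(a_{n,j})f(a_{n,j})=P_n(a_{n,j})$, and then uses that the rational function $P_np/\omega_n=O(a^{-2})$ has vanishing residue sum. Your version is slightly cleaner, since it applies the residue theorem only to the rational function $P_np/\omega_n$. The paper instead phrases this step as a global residue statement for $G$, which involves the non-rational function $f$.
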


\begin{proof}
By \eqref{eq:pade-divisibility}, the function
\[
F(a):=\frac{Q_n(a)f(a)-P_n(a)}{\omega_n(a)}
\]
is analytic at each node $a_{n,j}$ (the numerator vanishes there). Hence $F$ has no poles at the nodes.
Let $p$ be a polynomial with $\deg p\le n-1$ and consider
\[
G(a):=\frac{Q_n(a)f(a)\,p(a)}{\omega_n(a)}.
\]
The only possible poles of $G$ are simple poles at the nodes $a_{n,j}$ coming from $1/\omega_n(a)$.
However,
\[
G(a)=\frac{P_n(a)\,p(a)}{\omega_n(a)} + p(a)\,F(a),
\]
and since $\deg(P_n p)\le 2n-1$, the function $(P_n p)/\omega_n$ has simple poles at the nodes and decays as
$O(a^{-2})$ at infinity, while $pF$ is analytic at the nodes. Therefore, the sum of residues of $G$ at all
finite poles equals minus the residue at infinity, which is zero. Computing residues at $a_{n,j}$ yields
\begin{equation*}
\begin{aligned}
0=\sum_{j=0}^{2n} \Res_{a=a_{n,j}} G(a)
&=\sum_{j=0}^{2n} Q_n(a_{n,j})\,f(a_{n,j})\,p(a_{n,j})\,\Res_{a=a_{n,j}}\frac{1}{\omega_n(a)}
\\
&=\sum_{j=0}^{2n} Q_n(a_{n,j})\,f(a_{n,j})\,p(a_{n,j})\,\frac{1}{\omega_n'(a_{n,j})},
\end{aligned}
\end{equation*}
which is \eqref{eq:discrete-orth}.
\end{proof}

Discrete orthogonality structures of this type are closely related to discrete orthogonal polynomial ensembles;
see, e.g., \cite{S-mzm13590} for a representative construction in the discrete setting.

\begin{remark}
The discrete orthogonality \eqref{eq:discrete-orth} is the starting point for the $2\times2$ pole RHP
formulation in Section~\ref{sec:rhp-poles}.
\end{remark}

\subsection{Nondegeneracy and uniqueness}
\label{subsec:pade-uniq}

\begin{definition}[Nondegeneracy]
\label{def:nondeg}
For a fixed $n$, the multipoint Pad\'e problem \eqref{eq:pade-mp-conditions} is called \emph{nondegenerate}
if there exists a pair $(P_n,Q_n)$ with $\deg P_n\le n$, $\deg Q_n\le n$, $Q_n\not\equiv 0$ satisfying
\eqref{eq:pade-mp-conditions} and such that $\deg Q_n = n$.
\end{definition}

\begin{lemma}[Uniqueness up to scaling]
\label{lem:pade-uniq}
Assume nondegeneracy (Definition~\ref{def:nondeg}). Then the solution pair $(P_n,Q_n)$ of
\eqref{eq:pade-mp-conditions} is unique up to multiplication by a nonzero constant.
\end{lemma}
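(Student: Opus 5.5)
The plan is to work with the solution set of \eqref{eq:pade-mp-conditions} as a linear space and to show that it is one-dimensional. There are two steps. First, a cross-multiplication argument shows that all solutions represent the same rational function. Second, the normality built into nondegeneracy, used through Lemma~\ref{lem:discrete-orth}, upgrades this to proportionality of the pairs.

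\emph{Step 1 (same rational function).} Let $(P_n,Q_n)$ and $(\tilde P_n,\tilde Q_n)$ both satisfy \eqref{eq:pade-mp-conditions}. Consider
\[
D:=\tilde Q_n(Q_nf-P_n)-Q_n(\tilde Q_nf-\tilde P_n)=Q_n\tilde P_n-\tilde Q_nP_n .
\]
The right-hand side is a polynomial of degree at most $2n$, and the left-hand side vanishes at each of the $2n+1$ distinct nodes $a_{n,j}$. Hence $D\equiv0$, that is, $Q_n\tilde P_n=\tilde Q_nP_n$. Since $\deg Q_n=n$ and $Q_n\not\equiv0$, this gives $\tilde P_n/\tilde Q_n=P_n/Q_n$ whenever $\tilde Q_n\not\equiv0$.

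\emph{Step 2 (proportionality of the pairs).} The set $\mathcal V$ of all pairs $(P,Q)$ with $\deg P,\deg Q\le n$ satisfying \eqref{eq:pade-mp-conditions}, now allowing $Q\equiv0$, is a linear space, and the claim is $\dim\mathcal V=1$. Suppose $\dim\mathcal V\ge2$. Take a second element independent of $(P_n,Q_n)$ and subtract a suitable multiple of $(P_n,Q_n)$ to kill the coefficient of $a^n$ in $Q$. This produces a nonzero pair $(\hat P,\hat Q)\in\mathcal V$ with $\deg\hat Q\le n-1$.

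If $\hat Q\equiv0$, then $\hat P$ vanishes at $2n+1$ points with $\deg\hat P\le n$, so $\hat P\equiv0$, a contradiction. Otherwise, Lemma~\ref{lem:discrete-orth} shows that $\hat Q$ is a nonzero polynomial of degree $\le n-1$ orthogonal to all polynomials of degree $\le n-1$ with respect to the discrete functional $p\mapsto\sum_j p(a_{n,j})w_{n,j}$. Equivalently, the $n\times n$ moment (Hankel-type) matrix of this functional is singular. In that case the degree-$n$ orthogonal polynomial $Q_n$ cannot be unique up to scaling: $Q_n+\hat Q$ is another solution of the orthogonality conditions.

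\emph{Main obstacle.} The contradiction in Step 2 requires that the notion of nondegeneracy exclude solutions with $\deg Q<n$. This does not follow merely from the existence of one solution with $\deg Q_n=n$. In a degenerate block, where $qf-p$ vanishes at all nodes for a reduced pair $(p,q)$ with $\deg q<n$, the pairs $(hp,hq)$ for all $h$ with $\deg h\le n-\max(\deg p,\deg q)$ are solutions, so $\dim\mathcal V>1$. Some of these may still have $\deg(hq)=n$.

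I would therefore read Definition~\ref{def:nondeg} in the normal sense: every nontrivial solution has $\deg Q_n=n$. Equivalently, the $n\times n$ discrete moment determinant is nonzero, which is the determinantal condition $\mathrm{(ND)}_n$ used later in Definition~\ref{def:NDn}. Under this reading the pair $(\hat P,\hat Q)$ above is excluded at once, so $\dim\mathcal V=1$. This proves uniqueness up to a nonzero scalar, and that scalar is then fixed by \eqref{eq:Y-normalization}.
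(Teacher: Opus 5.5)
Your argument is correct, and you are right not to use Definition~\ref{def:nondeg} as literally written: under that definition the lemma is false. Take $n=1$ and data $f(a_{1,j})=1$ for $j=0,1,2$. Then $Q-P$ has degree $\le 1$ and three zeros, so the solutions are exactly the pairs $(Q,Q)$ with $\deg Q\le 1$. The pair $(z,z)$ shows that nondegeneracy holds, yet $(z-1,z-1)$ is a non-proportional solution; here $\det M_1=0$.

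The paper's proof consists of your Step~1 followed by a one-line conclusion, and that conclusion has two gaps. First, it asserts $\deg\widehat Q_n=n$ for the second solution, which the hypothesis does not provide. Second, it infers proportionality from $P_n/Q_n=\widehat P_n/\widehat Q_n$ together with equal degrees. That inference fails whenever $P_n$ and $Q_n$ share a factor: in the example above both denominators have degree exactly $n$.

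Your route replaces this with a dimension count on $\mathcal V$. Eliminating the $a^n$-coefficient produces a nonzero solution with $\deg\hat Q\le n-1$, which the normal reading forbids. That reading is exactly $\mathrm{(ND)}_n$, and you can see this directly without the Hankel matrix. The condition $\det M_n\ne 0$ means the homogeneous system with $q_n=0$ has only the trivial solution, i.e.\ $\mathcal V$ contains no nonzero pair with $\deg Q\le n-1$.

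So your version is the statement that is actually true. It coincides with the uniqueness asserted in Proposition~\ref{prop:ND-uniq}, and it reconciles Definition~\ref{def:nondeg} with Definition~\ref{def:NDn}. Under that hypothesis, Step~1 and the detour through Lemma~\ref{lem:discrete-orth} are not needed for the conclusion, though the Hankel formulation is a useful equivalent form.

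One small imprecision: a degenerate block forces $\dim\mathcal V>1$ only when $\max(\deg p,\deg q)<n$, not merely when $\deg q<n$.
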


\begin{proof}
If $(P_n,Q_n)$ and $(\widehat P_n,\widehat Q_n)$ are two solutions, then
\[
Q_n f-P_n \ \text{and}\ \widehat Q_n f-\widehat P_n
\]
vanish at the same $2n+1$ distinct nodes. Eliminating $f$ yields
\[
Q_n\widehat P_n-\widehat Q_n P_n
\]
vanishes at all nodes. But $\deg(Q_n\widehat P_n-\widehat Q_n P_n)\le 2n$, hence it must be identically
zero. Therefore $P_n/Q_n=\widehat P_n/\widehat Q_n$, and since $\deg Q_n=\deg \widehat Q_n=n$, the pairs differ
by a nonzero scalar factor.
\end{proof}

\begin{remark}
\label{rem:nondeg-use}
Nondegeneracy ensures that the normalization \eqref{eq:Y-infty} identifies $Q_n$ as $Y_{11}$ and fixes the
overall scalar in the RHP formulation.
\end{remark}

\begin{remark}
\label{rem:nondegeneracy-generic}
For generic node sets and values $\{f(a_{n,j})\}$ the problem is nondegenerate; degeneracy corresponds to an
algebraic condition on the data.
\end{remark}

\subsection{A determinantal criterion for nondegeneracy}
\label{subsec:ND-criterion}

Fix distinct nodes $a_{n,0},\dots,a_{n,2n}\in\C$ and values $f(a_{n,j})\in\C$.
Write
\[
Q_n(z)=q_0+q_1 z+\cdots+q_n z^n,\qquad
P_n(z)=p_0+p_1 z+\cdots+p_n z^n.
\]
The interpolation conditions $Q_n(a_{n,j})f(a_{n,j})-P_n(a_{n,j})=0$ for $j=0,\dots,2n$
form a homogeneous linear system of $2n+1$ equations in the $2n+2$ unknowns
$(q_0,\dots,q_n,p_0,\dots,p_n)$.
To remove scaling, we impose the normalization $q_n=1$ and obtain a square linear system
\begin{equation}
\label{eq:ND-linear-system}
M_n\,x_n=b_n,
\end{equation}
where $x_n=(q_0,\dots,q_{n-1},p_0,\dots,p_n)^{\mathsf T}$ and $M_n$ depends only on the data
$\{a_{n,j},f(a_{n,j})\}_{j=0}^{2n}$.

\begin{definition}[Nondegeneracy condition $\mathrm{(ND)}_n$]
\label{def:NDn}
We say that $\mathrm{(ND)}_n$ holds if $\det M_n\neq 0$ in \eqref{eq:ND-linear-system}.
\end{definition}

\begin{proposition}[Uniqueness under $\mathrm{(ND)}_n$]
\label{prop:ND-uniq}
If $\mathrm{(ND)}_n$ holds, then there exists a unique pair $(P_n,Q_n)$ solving the multipoint Pad\'e
conditions with $\deg Q_n=n$ and $Q_n$ monic.
\end{proposition}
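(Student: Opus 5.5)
The plan is to reduce everything to elementary linear algebra on the square system \eqref{eq:ND-linear-system}.

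First I make the correspondence explicit. Consider pairs $(P_n,Q_n)$ with $\deg P_n\le n$, $\deg Q_n=n$, $Q_n$ monic. Writing them in coefficients, such pairs are in bijection with vectors $x_n=(q_0,\dots,q_{n-1},p_0,\dots,p_n)^{\mathsf T}\in\C^{2n+1}$; the leading coefficient $q_n=1$ is fixed.

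Next I check that \eqref{eq:ND-linear-system} encodes exactly the Pad\'e conditions. The $j$-th interpolation condition \eqref{eq:pade-mp-conditions} reads
\[
\sum_{k=0}^{n-1} q_k\,a_{n,j}^k f(a_{n,j})-\sum_{k=0}^{n}p_k\,a_{n,j}^k=-a_{n,j}^n f(a_{n,j}).
\]
So the $j$-th row of $M_n$ is $(f(a_{n,j}),\,a_{n,j}f(a_{n,j}),\dots,a_{n,j}^{n-1}f(a_{n,j}),\,-1,-a_{n,j},\dots,-a_{n,j}^n)$, and the $j$-th entry of $b_n$ is $-a_{n,j}^nf(a_{n,j})$. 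There are $2n+1$ equations in $2n+1$ unknowns, so $M_n$ is square. Hence a monic pair $(P_n,Q_n)$ of degree $n$ solves the multipoint Pad\'e problem if and only if its coefficient vector solves $M_nx_n=b_n$. This is the only point that needs care: one must confirm that the normalization $q_n=1$ moves the $q_n$-column into the right-hand side, rather than leaving a free parameter. I expect it to be routine bookkeeping.

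Finally, under $\mathrm{(ND)}_n$ the matrix $M_n$ is invertible, so $x_n=M_n^{-1}b_n$ exists and is unique. By Cramer's rule it is even explicit. The corresponding $Q_n(z)=z^n+\sum_{k<n}q_kz^k$ has degree exactly $n$ and is monic, and $P_n$ has degree at most $n$.

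Existence and uniqueness within the class of monic degree-$n$ denominators then follow from the bijection. In particular the problem is nondegenerate in the sense of Definition~\ref{def:nondeg}. Lemma~\ref{lem:pade-uniq} then shows that any other solution (of any denominator degree at most $n$) yields the same rational function $P_n/Q_n$. This is consistent with the normalization discussed in Remark~\ref{rem:nondeg-use}, although it is not needed for the statement itself.
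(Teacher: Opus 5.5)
Your proposal is correct and takes the same approach as the paper: fixing $q_n=1$ turns the Pad\'e conditions for a monic degree-$n$ denominator into the square system $M_nx_n=b_n$, which has a unique solution when $\det M_n\neq 0$. The paper's one-line proof leaves implicit the row structure of $M_n$ and the bijection between monic pairs and coefficient vectors, both of which you write out.
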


\begin{proof}
If $\det M_n\neq 0$, the normalized system \eqref{eq:ND-linear-system} has a unique solution, yielding a
unique monic $Q_n$ and the corresponding $P_n$.
\end{proof}

\begin{proposition}[Generic validity of $\mathrm{(ND)}_n$]
\label{prop:ND-generic}
For fixed distinct nodes $\{a_{n,j}\}_{j=0}^{2n}$, the determinant $\det M_n$ is a nontrivial polynomial
function of the values $\{f(a_{n,j})\}_{j=0}^{2n}$; hence $\mathrm{(ND)}_n$ fails only on a proper algebraic
hypersurface in $\C^{2n+1}$.
\end{proposition}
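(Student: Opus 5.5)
The plan is to establish two facts separately: that $\det M_n$ depends polynomially on the values $f_j:=f(a_{n,j})$, and that this polynomial is not identically zero. The second fact is the real content. I will prove it by exhibiting one explicit choice of values for which $\det M_n\neq0$.

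\emph{Polynomiality.} Write out the rows of $M_n$ explicitly. The $j$-th interpolation condition with $q_n=1$ reads
\[
\sum_{k=0}^{n-1} q_k\, f_j a_{n,j}^k-\sum_{k=0}^{n} p_k\, a_{n,j}^k=-f_j a_{n,j}^n .
\]
So row $j$ of $M_n$ is $\bigl(f_j,\,f_j a_{n,j},\dots,f_j a_{n,j}^{n-1},\,-1,\,-a_{n,j},\dots,-a_{n,j}^{n}\bigr)$. Each entry is either independent of the values or linear in the single variable $f_j$. Expanding the determinant therefore gives a polynomial in $(f_0,\dots,f_{2n})$, of degree at most one in each variable and total degree at most $n$. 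The coefficients of this polynomial depend only on the fixed nodes. Hence its zero set is an algebraic hypersurface, and it is a proper one as soon as the polynomial is nontrivial.

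\emph{Nontriviality.} Since $M_n$ is square, $\det M_n\neq0$ is equivalent to the homogeneous system $M_nx=0$ having only the trivial solution. A solution of $M_nx=0$ corresponds to a pair $(\widetilde Q,P)$ with $\deg\widetilde Q\le n-1$ and $\deg P\le n$ such that $\widetilde Q(a_{n,j})f_j-P(a_{n,j})=0$ for all $j$. Choose a point $c\in\C$ distinct from every node, and set $f_j:=(a_{n,j}-c)^{-n}$, so that $f_j=1/q(a_{n,j})$ with $q(z)=(z-c)^n$. Multiplying the interpolation conditions by $q(a_{n,j})\neq0$ shows that the polynomial $\widetilde Q-Pq$ vanishes at the $2n+1$ distinct nodes. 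This polynomial has degree at most $2n$, so it is identically zero and $\widetilde Q=Pq$. Comparing degrees, $\deg(Pq)=\deg P+n\ge n$ unless $P\equiv0$, while $\deg\widetilde Q\le n-1$. This forces $P\equiv0$ and then $\widetilde Q\equiv0$, so $x=0$. Thus $\det M_n\neq0$ at this data point, and the polynomial is nontrivial.

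The only delicate point is choosing the test data so that the degree comparison works. This is handled by taking $f=1/q$ with $\deg q=n$ exactly and $q$ nonvanishing at the nodes; the choice of $c$ off the node set guarantees both conditions for arbitrary distinct nodes.
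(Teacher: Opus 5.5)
Your proof is correct and takes the same route as the paper. Polynomiality comes from the entries of $M_n$ being affine in the values $f(a_{n,j})$, and nontriviality comes from evaluating $\det M_n$ at data sampled from a rational function. The difference is that you fix the concrete choice $f=(z-c)^{-n}$ and actually prove, via the degree count, that $M_nx=0$ forces $x=0$, whereas the paper only asserts $\det M_n\neq0$ for values from a ``generic'' rational function of type $(n,n)$. Your argument extends directly, using coprimality in place of the pure degree comparison, to any $p/q$ of type $(n,n)$ in lowest terms with $\deg q=n$ and $q$ nonvanishing at the nodes.
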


\begin{proof}
The entries of $M_n$ are affine functions of $f(a_{n,j})$, so $\det M_n$ is a polynomial in these variables.
It is not identically zero: for instance, taking values consistent with a generic rational function of type
$(n,n)$ yields $\det M_n\neq 0$. Therefore its zero set is a proper algebraic hypersurface.
\end{proof}

\begin{remark}[Practical verification of $\mathrm{(ND)}_n$]
\label{rem:NDn-practical}
For each fixed $n$, the condition $\mathrm{(ND)}_n$ is a finite-dimensional determinantal criterion:
it is equivalent to $\det M_n\neq 0$ in \eqref{eq:ND-linear-system} (Definition~\ref{def:NDn}).
Hence, for any concrete data set $\{a_{n,j},f(a_{n,j})\}_{j=0}^{2n}$, $\mathrm{(ND)}_n$ can be checked
numerically by evaluating $\det M_n$ (or, more stably, the rank/condition of $M_n$).
Degeneracy is algebraically exceptional: for fixed nodes it occurs only on the proper algebraic hypersurface
$\{\det M_n=0\}$ in the space of values $\{f(a_{n,j})\}$ (Proposition~\ref{prop:ND-generic}).
\end{remark}

We emphasize that $\mathrm{(ND)}_n$ is an \emph{input} condition: it is verified at each fixed $n$ and,
a priori, may fail along exceptional subsequences; our asymptotic conclusions apply for those 
(in particular, all sufficiently large) $n$ for which $\mathrm{(ND)}_n$ holds.

\begin{theorem}[Strong asymptotics in the regular regime]
\label{thm:main}
Fix $\Re s>1$ and let the nodes $\{a_{n,j}\}_{j=0}^{2n}$ be generated by the quantile rule
\eqref{eq:quantiles} with a positive real-analytic density $\kappa$ on $[A,B]\Subset(0,\infty)$.
Let $f(a):=\zeta(s,a)$ and let $(P_n,Q_n)$ denote the multipoint Pad\'e pair of type $(n,n)$ at the nodes
$\{a_{n,j}\}$, whenever it exists.
\end{theorem}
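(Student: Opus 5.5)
The plan follows the Deift--Zhou scheme of Section~\ref{subsec:outline}, applied for those large $n$ for which $\mathrm{(ND)}_n$ holds. By Proposition~\ref{prop:ND-uniq} and Lemma~\ref{lem:discrete-orth}, the monic $Q_n$ is the unique degree-$n$ polynomial orthogonal to $\mathcal P_{n-1}$ with respect to the discrete weights $w_{n,j}$. First I would pose the $2\times2$ pole RHP for $Y$: take $Y_{11}=Q_n$, and let $Y_{12}$ be the Cauchy-type sum $\sum_j Q_n(a_{n,j})w_{n,j}/(z-a_{n,j})$, which equals $P_n/\omega_n$ up to the normalization \eqref{eq:Y-normalization}. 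The second row is built from the degree-$(n-1)$ analogue. Existence of the second row also needs nondegeneracy at the adjacent index, which I would deduce from $\mathrm{(ND)}_n$ through the Hankel-type determinant.

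Next I would remove the poles with the local triangular factors $\begin{pmatrix}1&0\\ -w_{n,j}/(z-a_{n,j})&1\end{pmatrix}$ inside small disks, then compress the circles onto one contour $\Gamma_n$ surrounding $n[A,B]$. This produces a jump $\begin{pmatrix}1&W_n\\0&1\end{pmatrix}$ with barycentric function $W_n=L_n/\omega_n$. After rescaling $z=n\zeta$, the quantile construction gives $\frac1n\log|\omega_n(n\zeta)|-\log n\to\int\log|\zeta-x|\kappa(x)\,dx$. The Hurwitz zeta values enter only through $L_n$, the interpolating polynomial of $f$. Since $\zeta(s,a)\sim a^{1-s}/(s-1)$ varies only polynomially on $[nA,nB]$, I expect $|L_n|=\exp(o(n))$ on $\Gamma_n$. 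This yields the factorization $W_n=\Van\,\mathcal E_n$.

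For the $g$-function, I would take the constrained equilibrium measure $\mu_*$ for the log-energy with external field $-U^{\kappa}$, constrained by $\mu_*\le\kappa$ with mass $1$. The regular one-cut soft-edge hypothesis gives a support $[c,d]$ with square-root vanishing at $c,d$, saturated regions outside it, and strict KKT inequalities elsewhere. Setting $S=e^{-n\ell\sigma_3/2}T e^{-n(g-\ell/2)\sigma_3}$, the diagonal jumps on $[c,d]$ become oscillatory, $e^{\pm n\varphi}$. Opening lenses then makes the lip jumps $I+O(e^{-cn})$ outside $U_c\cup U_d$. On the saturated parts, the barycentric structure, that is, the upper- versus lower-triangular choice of pole removal, must be swapped, as in discrete orthogonal polynomial analyses. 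The outer parametrix $N$ is the standard Szeg\H{o}-function solution built from $\Van$ with $(\frac{\zeta-d}{\zeta-c})^{\sigma_3/4}$ structure. The Airy parametrices use $\frac{3}{2}\varphi^{2/3}$ as the conformal map, with matching $P^{(c,d)}N^{-1}=I+O(1/n)$ on $\partial U_{c,d}$.

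The main obstacle is the error factor $\mathcal E_n$. An $\exp(o(n))$ error does not give $O(1/n)$ small norm. So I would need the sharper $\mathcal E_n=1+O(1/n)$, uniformly on $\Gamma_n$ away from the nodes. To get it, I would write $\omega_n(n\zeta)$ through the Euler--Maclaurin expansion of $\sum_j\log(\zeta-\alpha_{n,j})$ using the quantile rule and the analyticity of $\kappa$. I would represent $L_n/\omega_n$ as a contour integral of $f(n\cdot)/\omega_n$ over a large circle. The Hurwitz zeta function is analytic in $\Re a>0$ and has algebraic decay there, and I would use this to extract an analytic $\Van(\zeta)=n^{1-s}\zeta^{1-s}/(s-1)\cdot(1+O(1/n))$ up to explicit constants. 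Once this is in place, $R=SN^{-1}$ (respectively $S(P^{(c,d)})^{-1}$) has jumps $I+O(1/n)$ uniformly, and the small-norm theory gives $R=I+O(1/n)$. Reverting the transformations then gives $Q_n(n\zeta)=e^{ng(\zeta)}(N_{11}(\zeta)+O(1/n))$ on compacta of $\C\setminus[A,B]$, and Airy formulas in $O(n^{-2/3})$ neighborhoods of $c,d$. The analogous formula for $P_n$ comes from $Y_{12}\omega_n$.
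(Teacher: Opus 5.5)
Your outline follows the paper's own route: pole RHP, pole removal, compression onto $\Gamma_n$, constrained $g$-function, lenses, $N$ plus Airy parametrices, small norm. Two of your deviations are improvements. First, $\sum_j Q_n(a_{n,j})w_{n,j}/(z-a_{n,j})=P_n/\omega_n$, so $P_n=\omega_nY_{12}$ rather than $Y_{12}$ as in \eqref{eq:recover-PQ}. Second, you are right that the factor $\mathcal E_n=\exp(o(n))$ of Lemma~\ref{lem:En-def} cannot support an $O(1/n)$ small-norm conclusion.

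\textbf{Your repair of $\mathcal E_n$ is wrong as stated.} Euler--Maclaurin for $\sum_{j=0}^{2n}\log(\zeta-\alpha_{n,j})$ leaves the $O(1)$ endpoint term $\frac12\log\bigl((\zeta-A)(\zeta-B)\bigr)$. Also $\zeta(s,n\zeta)=\frac{(n\zeta)^{1-s}}{s-1}\bigl(1+O(1/n)\bigr)$. So at best
\[
\frac{f(n\zeta)}{\omega_n(n\zeta)}=\frac{n^{-2n-s}}{s-1}\,h(\zeta)\,e^{n\Van(\zeta)/2}\bigl(1+O(1/n)\bigr),
\qquad h(\zeta)=\frac{\zeta^{1-s}}{\sqrt{(\zeta-A)(\zeta-B)}} .
\]
Thus $\mathcal E_n$ is a constant times a nontrivial analytic function, not $1+O(1/n)$.
\begin{itemize}
\item The constant has to be conjugated away.
\item $h$ has to enter the outer parametrix through a Szeg\H{o} function. $\Van$ itself is absorbed by $g$; your formula equating $\Van$ with $n^{1-s}\zeta^{1-s}/(s-1)$ confuses the two.
\item $L_n$ is not $f$. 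Inflating the Hermite contour to the cut $(-\infty,0]$ of $\zeta(s,\cdot)$ gives $\widetilde L_n(\zeta)=\widetilde f_n(\zeta)+O\bigl(n^{O(1)}|\Omega_n(\zeta)/\Omega_n(0)|\bigr)$. So $|L_n|=e^{o(n)}$ on $\Gamma_n$ is not automatic; the proof of Lemma~\ref{lem:Ln-subexp} only yields $e^{O(n)}$. This restricts where $\Gamma$ may be placed.
\end{itemize}
Two smaller slips:
\begin{itemize}
\item The pole-removal factor must be upper triangular, $\begin{psmallmatrix}1&-w_{n,j}/(z-a_{n,j})\\0&1\end{psmallmatrix}$, to cancel the poles in the second column.
\item No adjacent-index condition is needed. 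The second row of $Y$ is solvable iff $\det\bigl[\sum_j a_{n,j}^{i+k}w_{n,j}\bigr]_{i,k=0}^{n-1}\neq0$, which is exactly $\mathrm{(ND)}_n$, provided you do not insist that $Y_{21}$ be monic of exact degree $n-1$.
\end{itemize}

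\textbf{The more serious gap is the $g$-function/lens step, which does not fit the compressed problem.} The paper's chain \eqref{eq:T-to-Sg}--\eqref{eq:lens-opening-transformation} does not supply this step either. After Lemma~\ref{lem:compression}, $\widetilde T(n\zeta)$ jumps only on $\Gamma$, at positive distance from $[A,B]$, and is analytic across $[c,d]$. So $S$ has the jump $\begin{psmallmatrix}1&\widetilde W_n e^{2ng-n\ell}\\0&1\end{psmallmatrix}$ on $\Gamma$, and on $(c,d)$ only the diagonal jump $e^{-n(g_+-g_-)\sigma_3}$. With no off-diagonal entry on the band there is nothing to factor and no lens to open. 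Consequently $N$, with jump $\begin{psmallmatrix}0&1\\-1&0\end{psmallmatrix}$, is not a model for this problem.

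Bringing the $(1,2)$-entry down to $[c,d]$ would mean pulling $\Gamma$ through the poles of $W_n$ at the nodes, which is exactly the discrete obstruction. The triangularity swap you borrow from discrete-OP analyses belongs to schemes that work at the nodes instead of compressing. Moreover, under (R1)--(R2) there is no saturated set: $\mu_*=0$ off $[c,d]$ is a void, so ``saturated regions outside it'' contradicts (R1).

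\textbf{The equilibrium input is also inconsistent.} Your field $-U^\kappa$ is the one that actually matches $W_n\approx e^{n\Van/2}$, unlike the paper's $V=2U^\kappa$. But it defines a different $\mu_*$ from the one in the theorem's hypothesis. In the normalization \eqref{eq:energy-functional}, write $I(\sigma)=\iint\log\frac{1}{|x-y|}\,d\sigma(x)\,d\sigma(y)$. Then $I(\mu)-\int U^\kappa\,d\mu=I(\mu-\tfrac12\kappa)-\tfrac14 I(\kappa)$, and $\mu-\tfrac12\kappa$ has zero mass. So your minimizer is $\mu_*=\tfrac12\kappa$, supported on all of $[A,B]$ with nonvanishing endpoint density. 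For it the soft-edge one-cut regime never occurs.

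\textbf{The root cause is sign alternation.} $\omega_n'(a_{n,j})$ has sign $(-1)^j$, so the weights $w_{n,j}$ alternate. The positive discrete-OP picture, with its constraint $\mu\le\kappa$, therefore does not apply. By the divided-difference identity the orthogonality reads $\oint_{\Gamma_n}Q_n(z)z^kf(z)\,dz/\omega_n(z)=0$ for $k<n$. The contour $\Gamma_n$ can be inflated onto the cut $(-\infty,0]$ of $\zeta(s,\cdot)$, and that is where a consistent steepest-descent analysis would have to be set up.
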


Assume that the nondegeneracy condition $\mathrm{(ND)}_n$ from Definition~\ref{def:NDn} holds for all
sufficiently large $n$, and that the associated constrained equilibrium measure $\mu_*$ belongs to the
regular one-cut soft-edge regime in the sense of Definition~\ref{def:regular-onecut}.
Then the pole Riemann--Hilbert problem for $Y$ (Problem~\ref{prob:Y-pole}) is solvable for all sufficiently large $n$,
and the Deift--Zhou steepest descent method applies.

In particular, the solution $Y$ admits the uniform strong asymptotics stated in \eqref{eq:main-Y} on every compact
$K\Subset\C\setminus[A,B]$, and the Pad\'e polynomials $(P_n,Q_n)$ inherit the corresponding asymptotics via the
recovery formula \eqref{eq:recover-PQ}.

\section{Riemann--Hilbert formulation with poles}
\label{sec:rhp-poles}

\subsection{The pole RHP for $Y$}
\label{subsec:pole-rhp}

Let $f(a)=\zeta(s,a)$ and let $\{a_{n,j}\}_{j=0}^{2n}$ be the distinct nodes.
Define the weights $w_{n,j}$ by \eqref{eq:weights-def}.
We introduce a $2\times2$ matrix-valued function $Y$ characterized as follows.

\begin{problem}[Pole Riemann--Hilbert problem for $Y$]
\label{prob:Y-pole}
Find a $2\times2$ matrix $Y$ such that:
\begin{itemize}[leftmargin=2.2em]
\item[(Y1)] $Y$ is analytic in $\C\setminus\{a_{n,0},a_{n,1},\dots,a_{n,2n}\}$.
\item[(Y2)] $Y$ has simple poles at the nodes, with residue conditions
\begin{equation}
\label{eq:Y-residue}
\Res_{z=a_{n,j}} Y(z)
=
\lim_{z\to a_{n,j}}Y(z)\,
\begin{pmatrix}
0 & w_{n,j}\\
0 & 0
\end{pmatrix},
\qquad j=0,1,\dots,2n.
\end{equation}
\item[(Y3)] $Y$ has the normalization at infinity
\begin{equation}
\label{eq:Y-infty}
Y(z)=\Bigl(I+O(z^{-1})\Bigr)\,z^{n\sigma_3},
\qquad z\to\infty,
\end{equation}
where $z^{n\sigma_3}:=\diag(z^n,z^{-n})$ with the principal branch of $z^n$.
\end{itemize}
\end{problem}

\begin{lemma}[Uniqueness]
\label{lem:Y-unique}
Problem~\ref{prob:Y-pole} has at most one solution.
\end{lemma}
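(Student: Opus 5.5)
The plan is the standard Liouville argument for pole Riemann--Hilbert problems. The first step is to show that any solution has $\det Y\equiv 1$. Let $Y$ solve Problem~\ref{prob:Y-pole}. Near each node $a_{n,j}$, condition \eqref{eq:Y-residue} says the residue matrix is $Y_0\begin{pmatrix}0&w_{n,j}\\0&0\end{pmatrix}$, where $Y_0$ is the regular part of $Y$ at $a_{n,j}$.

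This residue condition has two consequences:
\begin{enumerate}[label=\textup{(\alph*)}, leftmargin=2.2em]
\item the first column of $Y$ is analytic at $a_{n,j}$;
\item the second column has a simple pole whose residue is $w_{n,j}$ times the value of the first column at $a_{n,j}$.
\end{enumerate}
Equivalently, the residue condition lets us write
\[
Y(z)=E_j(z)\begin{pmatrix}1&\dfrac{w_{n,j}}{z-a_{n,j}}\\0&1\end{pmatrix}
\]
with $E_j$ analytic near $a_{n,j}$. To check this, multiply $Y$ on the right by the inverse triangular factor. The pole of the second column is then cancelled by the residue relation, and the first column is untouched.

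Since the triangular factor has determinant one, $\det Y=\det E_j$ is analytic at each node. Hence $\det Y$ is entire. By \eqref{eq:Y-infty}, $\det Y(z)=\det(I+O(z^{-1}))\det z^{n\sigma_3}=1+O(z^{-1})$. Liouville's theorem gives $\det Y\equiv1$, so $Y^{-1}$ exists and is analytic away from the nodes.

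Next take two solutions $Y,\widetilde Y$ and set $X:=\widetilde Y Y^{-1}$. Away from the nodes $X$ is analytic. Near $a_{n,j}$ the local factorization gives
\[
X=\widetilde E_j\,\begin{pmatrix}1&\tfrac{w_{n,j}}{z-a_{n,j}}\\0&1\end{pmatrix}\begin{pmatrix}1&\tfrac{w_{n,j}}{z-a_{n,j}}\\0&1\end{pmatrix}^{-1}E_j^{-1}=\widetilde E_jE_j^{-1}.
\]
The same triangular factor appears in both because the weights $w_{n,j}$ are fixed data. Since $E_j^{-1}$ is analytic (as $\det E_j=1$), $X$ is analytic at every node and hence entire. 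At infinity,
\[
X=(I+O(z^{-1}))\,z^{n\sigma_3}z^{-n\sigma_3}\,(I+O(z^{-1}))=I+O(z^{-1}),
\]
so Liouville's theorem gives $X\equiv I$, that is, $\widetilde Y=Y$.

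The only step requiring care is justifying the local factorization at a node, i.e., that \eqref{eq:Y-residue} genuinely forces the first column to be regular and fixes the residue of the second column. I would do this directly. Write the Laurent expansion $Y=\frac{R}{z-a_{n,j}}+Y_0+O(z-a_{n,j})$ and interpret the right-hand side of \eqref{eq:Y-residue} as acting on $Y_0$, the only meaningful reading of the limit. Then the first column of $R$ vanishes, and $R_{\cdot 2}=w_{n,j}(Y_0)_{\cdot1}$. After that, the computation of $Y\begin{pmatrix}1&-w_{n,j}/(z-a_{n,j})\\0&1\end{pmatrix}$ is immediate.
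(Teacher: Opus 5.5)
Your proof is correct and follows the same route as the paper: Liouville's theorem applied to $\widetilde Y Y^{-1}$, which is entire because the pole parts cancel at each node and which tends to $I$ at infinity. You also supply the step the paper's one-line proof leaves implicit, namely $\det Y\equiv 1$, obtained from the local factorization $Y=E_j\begin{psmallmatrix}1&w_{n,j}/(z-a_{n,j})\\0&1\end{psmallmatrix}$; this is what justifies inverting a solution and concluding that $\widetilde Y Y^{-1}$ is analytic at the nodes.
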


\begin{proof}
If $Y_1$ and $Y_2$ both solve the problem, then $Y_1Y_2^{-1}$ is entire (the pole singularities cancel by
\eqref{eq:Y-residue}) and satisfies $Y_1Y_2^{-1}\to I$ as $z\to\infty$ by \eqref{eq:Y-infty}, hence
$Y_1Y_2^{-1}\equiv I$ by Liouville's theorem.
\end{proof}

\subsection{Existence and identification with multipoint Pad\'e}
\label{subsec:Y-Pade-identification}

\begin{lemma}[RHP solution in terms of $(P_n,Q_n)$]
\label{lem:Y-from-PQ}
Assume that a multipoint Pad\'e pair $(P_n,Q_n)$ satisfying \eqref{eq:pade-mp-conditions} exists.
Define

\begin{equation}
\label{eq:Y-explicit}
Y(z):=
\begin{pmatrix}
Q_n(z) & \displaystyle \sum_{j=0}^{2n}\frac{Q_n(a_{n,j})\,w_{n,j}}{z-a_{n,j}}\\[1.0em]
\gamma_{n-1}\,Q_{n-1}(z) & \displaystyle \gamma_{n-1}\sum_{j=0}^{2n}\frac{Q_{n-1}(a_{n,j})\,w_{n,j}}{z-a_{n,j}}
\end{pmatrix},
\end{equation}
where $Q_{n-1}$ is the unique monic polynomial of degree $n-1$ satisfying the orthogonality
\[
\sum_{j=0}^{2n} Q_{n-1}(a_{n,j})\,p(a_{n,j})\,w_{n,j}=0,\qquad \deg p\le n-2,
\]
and $\gamma_{n-1}\ne 0$ is chosen so that the expansion \eqref{eq:Y-infty} holds.
Then $Y$ solves Problem~\ref{prob:Y-pole}. In particular,
\[
Q_n(z)=Y_{11}(z),\qquad P_n(z)=Y_{12}(z)
\]
in the normalization \eqref{eq:Y-infty}.
\end{lemma}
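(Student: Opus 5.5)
We verify (Y1)--(Y3) directly for the matrix $Y$ defined in \eqref{eq:Y-explicit}, taking $Q_n$ monic of degree $n$ (as in Proposition~\ref{prop:ND-uniq} under $\mathrm{(ND)}_n$), and then identify $Y_{12}$ with $P_n$. The argument is the discrete analogue of the Fokas--Its--Kitaev verification, with the Cauchy transform replaced by the finite sum $C[h](z):=\sum_j h(a_{n,j})w_{n,j}/(z-a_{n,j})$.

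Condition (Y1) is immediate: the first column consists of polynomials and the second column of finite sums of simple fractions, so $Y$ is analytic off the nodes. For (Y2), the first column has no poles. Near $a_{n,j}$ the second column has residue $Q_n(a_{n,j})w_{n,j}$, respectively $\gamma_{n-1}Q_{n-1}(a_{n,j})w_{n,j}$. This is exactly $\lim_{z\to a_{n,j}}Y(z)\begin{pmatrix}0&w_{n,j}\\0&0\end{pmatrix}$, whose second column equals $w_{n,j}$ times the first column of $Y$ evaluated at $a_{n,j}$. This gives \eqref{eq:Y-residue}.

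For (Y3), expand $1/(z-a)=\sum_{k\ge0}a^k z^{-k-1}$ for large $|z|$, so that $C[h](z)=\sum_{k\ge0}m_k[h]\,z^{-k-1}$ with $m_k[h]=\sum_j h(a_{n,j})a_{n,j}^k w_{n,j}$. By Lemma~\ref{lem:discrete-orth}, $m_k[Q_n]=0$ for $k\le n-1$, hence $Y_{12}=O(z^{-n-1})$. Since $Y_{11}=z^n+O(z^{n-1})$, the first row is $(I+O(z^{-1}))z^{n\sigma_3}$. For the second row, $\gamma_{n-1}Q_{n-1}=O(z^{n-1})$, and by the defining orthogonality of $Q_{n-1}$ we have $Y_{22}=\gamma_{n-1}h_{n-1}z^{-n}+O(z^{-n-1})$ with $h_{n-1}:=m_{n-1}[Q_{n-1}]$. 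Choosing $\gamma_{n-1}=1/h_{n-1}$ gives (Y3).

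The main obstacle is justifying existence of $Q_{n-1}$ and $h_{n-1}\neq0$, which amounts to nondegeneracy at the neighbouring index. I would handle this with the Hankel/Gram matrix $H_m=(m_{k+l})_{k,l=0}^{m-1}$ of the discrete functional $\langle p\rangle=\sum_j p(a_{n,j})w_{n,j}$. The monic $Q_n$ of exact degree $n$ exists uniquely iff $\det H_n\ne0$, and this is the content of $\mathrm{(ND)}_n$ once the interpolation system is rewritten via Lemma~\ref{lem:discrete-orth}. Existence and uniqueness of monic $Q_{n-1}$ needs $\det H_{n-1}\ne0$, and $h_{n-1}=\det H_n/\det H_{n-1}$. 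Thus the required nondegeneracy is that both $\det H_{n-1}$ and $\det H_n$ are nonzero. The statement implicitly assumes $Q_{n-1}$ exists uniquely, which gives $\det H_{n-1}\neq0$; with $\det H_n\ne0$ this yields $h_{n-1}\ne0$. I would state this explicitly as the hypothesis being used.

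Finally, to identify $P_n$, note that $F=(Q_nf-P_n)/\omega_n$ is analytic at the nodes. The polynomial $P_n$ of degree $\le n$ interpolates $Q_nf$ at the $2n+1$ nodes. The Lagrange interpolation formula writes the degree-$\le 2n$ interpolant of $Q_nf$ as $\omega_n(z)\,C[Q_n](z)$, and this polynomial equals $P_n$ by uniqueness of interpolation. Hence $C[Q_n]=P_n/\omega_n$. Thus $Y_{12}=P_n/\omega_n$, i.e.\ the Pad\'e numerator up to the fixed node factor $\omega_n$, and the recovery formula \eqref{eq:recover-PQ} is to be read in this sense. I would also remark that the literal equality $P_n=Y_{12}$ should be corrected to $P_n=\omega_nY_{12}$. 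Uniqueness then follows from Lemma~\ref{lem:Y-unique}.
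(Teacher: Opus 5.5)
Your argument is correct. For (Y1)--(Y3) it follows the paper's route, but more carefully. The paper only cites ``degree constraints and the choice of $\gamma_{n-1}$''. You instead use Lemma~\ref{lem:discrete-orth} to get $Y_{12}=O(z^{-n-1})$, and you make explicit that $Q_n$ must be monic of exact degree $n$. You also justify that $\gamma_{n-1}$ exists via $h_{n-1}=\det H_n/\det H_{n-1}\neq0$, which genuinely needs both $\mathrm{(ND)}_n$ and the uniqueness of $Q_{n-1}$.

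One small precision: the implication you need, $\det M_n\neq0\Rightarrow\det H_n\neq0$, is the \emph{converse} of Lemma~\ref{lem:discrete-orth}, not the lemma itself. It follows from the Lagrange identity you use later. Take a kernel vector $q$ of $H_n$, so $\deg q\le n-1$. Then $\omega_n(z)\sum_j q(a_{n,j})w_{n,j}/(z-a_{n,j})$ is the interpolant of $qf$, and it has degree $\le n$. The pair formed by this polynomial and $q$ is therefore a nonzero vector in $\ker M_n$, so $\det M_n=0$.

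For the last clause, you and the paper use the same identity: $\omega_nY_{12}$ is the Lagrange interpolant of $Q_nf$ at the nodes and hence equals $P_n$. You draw the correct conclusion from it; the paper does not. Its proof passes from $Y_{12}=P_n/\omega_n$ to $Y_{12}=P_n$ by a non sequitur. Its parenthetical alternative is also false: $Y_{12}-P_n$ generally has poles at the nodes (residues $Q_n(a_{n,j})w_{n,j}$), and it does not decay unless $P_n\equiv0$. Indeed, (Y3) itself forces $Y_{12}=O(z^{-n-1})$, so $Y_{12}=P_n$ is impossible for nonzero $P_n$.

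Hence the final clause of the lemma, and the recovery formula \eqref{eq:recover-PQ}, are wrong as written. Your corrected statement, $Q_n=Y_{11}$ and $P_n=\omega_nY_{12}$, is the true one. It also means that any strong asymptotics for $P_n$ must combine those of $Y_{12}$ with those of $\omega_n$.
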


\begin{proof}
The entry $Y_{11}=Q_n$ is entire and has degree $n$; the entry $Y_{21}$ is entire and has degree at most $n-1$.
The entries $Y_{12}$ and $Y_{22}$ are Cauchy transforms of discrete measures and thus are meromorphic with at most
simple poles at the nodes. Using
\[
\Res_{z=a_{n,j}}\frac{1}{z-a_{n,j}}=1,
\]
we obtain \eqref{eq:Y-residue} for the second column. The normalization at infinity follows from the degree
constraints and the choice of $\gamma_{n-1}$.

Finally, note that
\begin{equation*}
\begin{aligned}
Y_{12}(z)&=\sum_{j=0}^{2n}\frac{Q_n(a_{n,j})\,f(a_{n,j})}{\omega_n'(a_{n,j})}\,\frac{1}{z-a_{n,j}}
\\
&=\frac{1}{\omega_n(z)}\sum_{j=0}^{2n}Q_n(a_{n,j})\,f(a_{n,j})\,\frac{\omega_n(z)}{(z-a_{n,j})\omega_n'(a_{n,j})}.
\end{aligned}
\end{equation*}
The sum in the last expression is precisely the Lagrange interpolation polynomial of degree $\le 2n$
for the function $a\mapsto Q_n(a)f(a)$ on the node set. Since $Q_nf-P_n$ vanishes at all nodes and $\deg P_n\le n$,
this interpolant equals $P_n$; hence $Y_{12}(z)=P_n(z)/\omega_n(z)$ times $\omega_n(z)$, i.e.\ $Y_{12}(z)=P_n(z)$
in the normalization \eqref{eq:Y-infty}. (Equivalently, one checks that $Y_{12}(z)-P_n(z)$ is entire and
decays as $O(z^{-n-1})$ at infinity, hence vanishes.)
\end{proof}

\begin{remark}
Lemma~\ref{lem:Y-from-PQ} is the standard discrete orthogonality--to--RHP correspondence (see, e.g., \cite{KuijlaarsSurvey,DeiftBook}): 
the first column encodes the Pad\'e denominator and the second column is a discrete Cauchy transform, with residue data fixed by
the weights $w_{n,j}$~\cite{DeiftBook}.
\end{remark}

In the broader setting of Markov-type systems and Hermite--Pad\'e approximation, analogous correspondences between orthogonality data,
equilibrium problems, and asymptotic behavior are classical; see, e.g.,~\cite{GoncharRakhmanovSorokin}.

\section{Quantile nodes and logarithmic potentials}
\label{sec:quantiles}

Recall that $\kappa$ is real-analytic and strictly positive on a neighborhood of $[A,B]$, with
\begin{equation}
\label{eq:kappa-normalization}
\int_A^B \kappa(x)\,dx=2,
\end{equation}
and the quantiles $\alpha_{n,j}\in[A,B]$ are defined by \eqref{eq:quantiles},
$\int_A^{\alpha_{n,j}}\kappa(t)\,dt=j/n$, $j=0,\dots,2n$.

\subsection{Quantile discretization and spacing estimates}
\label{subsec:quantile-spacing}

Let
\[
F(x):=\int_A^x \kappa(t)\,dt,\qquad x\in[A,B].
\]
Then $F$ is strictly increasing, $C^\omega$ on a neighborhood of $[A,B]$, and $F(B)=2$.
By definition, $\alpha_{n,j}=F^{-1}(j/n)$.

\begin{lemma}[Uniform spacing of quantile nodes]
\label{lem:quantile-spacing}
There exist constants $c_1,c_2>0$ independent of $n$ and $j$ such that
\begin{equation}
\label{eq:quantile-spacing}
\frac{c_1}{n}\le \alpha_{n,j+1}-\alpha_{n,j}\le \frac{c_2}{n},
\qquad j=0,1,\dots,2n-1.
\end{equation}
Moreover, for each fixed $m\ge 1$ there is $C_m>0$ such that the discrete derivatives satisfy
\[
\bigl|\Delta^m \alpha_{n,j}\bigr|\le \frac{C_m}{n^m},\qquad j=0,1,\dots,2n-m,
\]
where $\Delta$ denotes the forward difference operator in $j$.
\end{lemma}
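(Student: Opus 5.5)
The plan is to write everything in terms of the inverse function $G:=F^{-1}$, defined on $[0,2]$. Then $\alpha_{n,j}=G(j/n)$, and both claims become statements about finite differences of a smooth function sampled on the uniform grid of step $h=1/n$.

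\textbf{Regularity of $G$.} Since $\kappa$ is real-analytic and strictly positive on a neighborhood of $[A,B]$, $F$ is real-analytic there with $F'=\kappa>0$. By the analytic inverse function theorem, $G$ extends real-analytically to a neighborhood of $[0,2]$, with $G'(y)=1/\kappa(G(y))$. In particular, for every $m\ge1$ the derivative $G^{(m)}$ is continuous, hence bounded, on the compact interval $[0,2]$. Set $M_m:=\max_{[0,2]}|G^{(m)}|$. For the first derivative we also have the two-sided bound
\[
\frac{1}{\max_{[A,B]}\kappa}\le G'\le \frac{1}{\min_{[A,B]}\kappa}.
\]

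\textbf{Spacing estimate.} By the mean value theorem,
\[
\alpha_{n,j+1}-\alpha_{n,j}=G\bigl((j+1)/n\bigr)-G(j/n)=\frac1n\,G'(\eta_{n,j})
\]
for some $\eta_{n,j}\in(j/n,(j+1)/n)\subset[0,2]$. The bounds on $G'$ then give \eqref{eq:quantile-spacing} with
\[
c_1=\frac{1}{\max_{[A,B]}\kappa},\qquad c_2=\frac{1}{\min_{[A,B]}\kappa}.
\]
Equivalently, one can use $1/n=\int_{\alpha_{n,j}}^{\alpha_{n,j+1}}\kappa$ directly.

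\textbf{Higher differences.} We use the standard representation of the $m$-th forward difference of a $C^m$ function with step $h$:
\[
\Delta_h^m G(y)=h^m\int_{[0,1]^m} G^{(m)}\bigl(y+h(t_1+\dots+t_m)\bigr)\,dt_1\cdots dt_m .
\]
This follows by induction on $m$ from $\Delta_h u(y)=h\int_0^1u'(y+ht)\,dt$. Apply it with $y=j/n$ and $h=1/n$. For $0\le j\le 2n-m$, every argument $y+h(t_1+\dots+t_m)$ lies in $[j/n,(j+m)/n]\subset[0,2]$. Since $\Delta^m\alpha_{n,j}=\Delta_{1/n}^mG(j/n)$, we obtain $|\Delta^m\alpha_{n,j}|\le M_m n^{-m}$, so we may take $C_m=M_m$. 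Alternatively, the generalized mean value theorem gives $\Delta_h^mG(y)=h^mG^{(m)}(\xi)$ for some $\xi\in[y,y+mh]$, since $G$ is real-valued.

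The only point needing care is that all evaluation points stay inside $[0,2]$, where the bounds $M_m$ are valid. This is exactly what the index restriction $j\le 2n-m$ guarantees. Real-analyticity is more than is needed: $\kappa\in C^{m-1}$ with $\kappa>0$ on $[A,B]$ already suffices for the $m$-th difference bound. I do not expect any genuine obstacle here.
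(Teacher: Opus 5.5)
Your proof is correct and takes the same route as the paper. The spacing bound comes from the mean value theorem applied to the quantile map; the paper applies it to $F$, you apply it to $G=F^{-1}$, which is equivalent and gives the same constants $1/\kappa_{\max}$ and $1/\kappa_{\min}$. For $\Delta^m\alpha_{n,j}$ the paper only cites ``standard Taylor expansion arguments,'' and your integral representation of $\Delta_h^m G$ supplies that step explicitly, including the check that $j\le 2n-m$ keeps all evaluation points in $[0,2]$.
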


\begin{proof}
Since $\kappa$ is continuous and strictly positive on $[A,B]$, there exist $0<\kappa_{\min}\le\kappa_{\max}<\infty$
such that $\kappa_{\min}\le\kappa(x)\le\kappa_{\max}$ on $[A,B]$.
By the mean value theorem,
\[
\frac{1}{n}=F(\alpha_{n,j+1})-F(\alpha_{n,j})
=\kappa(\xi_{n,j})\bigl(\alpha_{n,j+1}-\alpha_{n,j}\bigr)
\]
for some $\xi_{n,j}\in(\alpha_{n,j},\alpha_{n,j+1})$, hence
\[
\frac{1}{n\kappa_{\max}}\le \alpha_{n,j+1}-\alpha_{n,j}\le \frac{1}{n\kappa_{\min}},
\]
which gives \eqref{eq:quantile-spacing}. Higher difference estimates follow from real-analyticity of $F^{-1}$
and standard Taylor expansion arguments for smooth quantile maps~\cite{DeiftBook}.
\end{proof}

\begin{lemma}[Riemann sum approximation with $O(1/n)$ remainder]
\label{lem:riemann-sum}
Let $\psi$ be $C^1$ on a neighborhood of $[A,B]$. Then
\begin{equation}
\label{eq:riemann-sum}
\frac{1}{n}\sum_{j=0}^{2n}\psi(\alpha_{n,j})
=
\int_A^B \psi(x)\,\kappa(x)\,dx \;+\; O\!\left(\frac{1}{n}\right),
\qquad n\to\infty,
\end{equation}
where the implied constant depends only on $\kappa$ and $\psi$ (through $\|\psi\|_{C^1}$).
\end{lemma}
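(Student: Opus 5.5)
The plan is to compare each node sum with the integral over the cell that the node determines. Since $\alpha_{n,j}=F^{-1}(j/n)$, the points $\alpha_{n,0}=A<\alpha_{n,1}<\dots<\alpha_{n,2n}=B$ partition $[A,B]$ into $2n$ cells $I_j=[\alpha_{n,j},\alpha_{n,j+1}]$, $j=0,\dots,2n-1$. Each cell carries exactly $\kappa$-mass $\int_{I_j}\kappa(x)\,dx=F(\alpha_{n,j+1})-F(\alpha_{n,j})=1/n$. First I would split off the last term, $\frac1n\psi(\alpha_{n,2n})=\frac1n\psi(B)$. Its size is at most $\|\psi\|_\infty/n$, so it is absorbed into the $O(1/n)$ remainder. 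It remains to compare $\frac1n\sum_{j=0}^{2n-1}\psi(\alpha_{n,j})$ with $\int_A^B\psi\kappa\,dx=\sum_{j=0}^{2n-1}\int_{I_j}\psi(x)\kappa(x)\,dx$.

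Next I would use the exact mass identity to write each term as an integral over its own cell:
\[
\frac1n\psi(\alpha_{n,j})=\int_{I_j}\psi(\alpha_{n,j})\,\kappa(x)\,dx .
\]
This gives
\[
\Bigl|\frac1n\sum_{j=0}^{2n-1}\psi(\alpha_{n,j})-\int_A^B\psi\kappa\,dx\Bigr|
\le\sum_{j=0}^{2n-1}\int_{I_j}\bigl|\psi(x)-\psi(\alpha_{n,j})\bigr|\,\kappa(x)\,dx .
\]
On $I_j$ the mean value theorem gives $|\psi(x)-\psi(\alpha_{n,j})|\le\|\psi'\|_{\infty,[A,B]}\,|I_j|$. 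By Lemma~\ref{lem:quantile-spacing}, $|I_j|\le c_2/n$. Each summand is therefore at most $\|\psi'\|_\infty (c_2/n)\cdot(1/n)$. Summing over the $2n$ cells yields the bound $2c_2\|\psi'\|_\infty/n$. Adding back the endpoint term, the total error is at most $\bigl(2c_2\|\psi'\|_\infty+\|\psi\|_\infty\bigr)/n$. This is $O(1/n)$ with a constant depending only on $\kappa$ (through $c_2=1/\kappa_{\min}$) and on $\|\psi\|_{C^1}$, as claimed in \eqref{eq:riemann-sum}.

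There is no genuine obstacle here; the argument is a one-sided Riemann sum estimate. The only points needing care are bookkeeping. First, the statement has $2n+1$ nodes but only $2n$ cells, which is why the single extra endpoint term must be peeled off. Second, the normalization \eqref{eq:kappa-normalization} guarantees that the cells exactly exhaust $[A,B]$, i.e.\ $\alpha_{n,2n}=B$. Third, $C^1$ regularity on $[A,B]$ itself suffices; the neighborhood assumption is not needed. The rate cannot be improved to $o(1/n)$ in general, because the left-endpoint rule plus the extra node produce a genuine $1/n$ term (of size $\tfrac{1}{2n}(\psi(A)+\psi(B))$). So $O(1/n)$ is the correct order, and no higher-order Euler--Maclaurin analysis is required.
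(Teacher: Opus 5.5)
Your proof is correct and is essentially the paper's argument written in the $x$-variable. The paper pushes the nodes forward to the uniform grid $\{j/n\}$ via $u=F(x)$ and invokes the $O(1/n)$ quadrature error for the $C^1$ function $\psi\circ F^{-1}$ on $[0,2]$; your cells $I_j$ of $\kappa$-mass $1/n$ are exactly the preimages of $[j/n,(j+1)/n]$, and your mean-value/spacing estimate is that quadrature bound made explicit, since $\|(\psi\circ F^{-1})'\|_\infty\le\|\psi'\|_\infty/\kappa_{\min}$. Your handling of the extra endpoint node also usefully clarifies that, with full weight $1/n$ at both ends, the sum is not literally the trapezoidal rule the paper names, though the resulting discrepancy $\tfrac{1}{2n}(\psi(A)+\psi(B))$ is absorbed into $O(1/n)$ either way.
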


\begin{proof}
Write the sum as a quadrature for the pushforward of the uniform grid $\{j/n\}$ under $F^{-1}$:
\[
\frac{1}{n}\sum_{j=0}^{2n}\psi(\alpha_{n,j})
=\frac{1}{n}\sum_{j=0}^{2n} (\psi\circ F^{-1})(j/n).
\]
Since $\psi\circ F^{-1}$ is $C^1$ on $[0,2]$, the trapezoidal rule yields
\[
\frac{1}{n}\sum_{j=0}^{2n} (\psi\circ F^{-1})(j/n)
=\int_0^2 (\psi\circ F^{-1})(u)\,du + O(1/n).
\]
Changing variables $u=F(x)$ gives
\[
\int_0^2 (\psi\circ F^{-1})(u)\,du
=\int_A^B \psi(x)\,F'(x)\,dx
=\int_A^B \psi(x)\,\kappa(x)\,dx,
\]
which proves \eqref{eq:riemann-sum}.
\end{proof}

\subsection{Asymptotics of $\Omega_n$}
\label{subsec:Omega-asympt}

Define the node polynomials
\begin{equation}
\label{eq:Omega-def}
\Omega_n(\zeta):=\prod_{j=0}^{2n}(\zeta-\alpha_{n,j}),
\qquad
\omega_n(z)=\prod_{j=0}^{2n}(z-a_{n,j}),
\end{equation}
so that $\omega_n(n\zeta)=n^{2n+1}\Omega_n(\zeta)$.

\begin{lemma}[Log-potential asymptotics for $\Omega_n$]
\label{lem:Omega-logpot}
Let $K\Subset\C\setminus[A,B]$ be compact. Then, uniformly for $\zeta\in K$,
\begin{equation}
\label{eq:Omega-logpot}
\frac{1}{n}\log|\Omega_n(\zeta)|
=
\int_A^B \log|\zeta-t|\,\kappa(t)\,dt
\;+\;
O\!\left(\frac{1}{n}\right),
\qquad n\to\infty.
\end{equation}
\end{lemma}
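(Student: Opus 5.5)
The plan is to write $\frac1n\log|\Omega_n(\zeta)|=\frac1n\sum_{j=0}^{2n}\psi_\zeta(\alpha_{n,j})$ with $\psi_\zeta(t):=\log|\zeta-t|$, and to invoke Lemma~\ref{lem:riemann-sum} with $\psi=\psi_\zeta$. The statement then follows once two points are checked: that $\psi_\zeta$ is $C^1$ on a fixed neighborhood of $[A,B]$, and that $\|\psi_\zeta\|_{C^1}$ is bounded uniformly for $\zeta\in K$.

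For the regularity, set $\delta:=\dist(K,[A,B])>0$ and choose the neighborhood $N_\delta=\{t\in\R:\dist(t,[A,B])<\delta/2\}$. For $t\in N_\delta$ and $\zeta\in K$ we have $|\zeta-t|\ge\delta/2$. Hence $\psi_\zeta(t)$ is real-analytic in $t$ there, with
\[
|\psi_\zeta(t)|\le \max\bigl(|\log(\delta/2)|,\ \log(\operatorname{diam}(K\cup[A,B])+\delta)\bigr)
\qquad\text{and}\qquad
|\partial_t\psi_\zeta(t)|=\Bigl|\Re\frac{1}{t-\zeta}\Bigr|\le \frac{2}{\delta}.
\]
So $\sup_{\zeta\in K}\|\psi_\zeta\|_{C^1(N_\delta)}<\infty$.

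The one step needing care is that the $O(1/n)$ constant in Lemma~\ref{lem:riemann-sum} depends on $\psi$ only through $\|\psi\|_{C^1}$, and on $\kappa$. It is stated that way, but to be safe I would re-derive the quadrature error directly. Put $h=\psi_\zeta\circ F^{-1}$ on $[0,2]$ and split the sum as $\frac1n\sum_{j=0}^{2n-1}h(j/n)+\frac1n h(2)$. The endpoint term is $O(\|h\|_\infty/n)$. For the left-endpoint Riemann sum, each cell gives
\[
\Bigl|\frac1n h(j/n)-\int_{j/n}^{(j+1)/n}h\Bigr|\le \frac{\|h'\|_\infty}{2n^2}.
\]
Summing over the $2n$ cells gives an error $\le \|h'\|_\infty/n$. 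Since $\|h'\|_\infty\le \|\psi_\zeta'\|_\infty/\kappa_{\min}$, the error is bounded by $C(K,\kappa)/n$, uniformly in $\zeta\in K$.

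Finally, the change of variables $u=F(x)$ gives $\int_0^2 h=\int_A^B\log|\zeta-t|\,\kappa(t)\,dt$, which yields \eqref{eq:Omega-logpot} uniformly on $K$. The only mild obstacle is this uniformity bookkeeping, and it is fully handled by the positive distance $\delta$. No cancellation between nodes is needed, since $K$ stays away from $[A,B]$.
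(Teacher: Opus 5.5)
Your proposal is correct and follows the paper's proof. Both apply Lemma~\ref{lem:riemann-sum} to $\psi_\zeta(t)=\log|\zeta-t|$ and get uniformity in $\zeta\in K$ from the uniform $C^1$ bound that $\dist(K,[A,B])>0$ provides. Your direct re-derivation of the quadrature error (a left-endpoint sum over $2n$ cells plus one endpoint term, with $\|h'\|_\infty\le\|\psi_\zeta'\|_\infty/\kappa_{\min}$) simply makes explicit the dependence of the constant on $\|\psi\|_{C^1}$ and $\kappa$, which the paper takes from that lemma without further comment.
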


\begin{proof}
Fix $\zeta\in K$. The function $t\mapsto \log|\zeta-t|$ is $C^1$ on a neighborhood of $[A,B]$ since
$\dist(K,[A,B])>0$. Apply Lemma~\ref{lem:riemann-sum} with $\psi(t)=\log|\zeta-t|$ to obtain
\[
\frac{1}{n}\sum_{j=0}^{2n}\log|\zeta-\alpha_{n,j}|
=
\int_A^B \log|\zeta-t|\,\kappa(t)\,dt + O(1/n),
\]
uniformly for $\zeta\in K$ because $\|\psi\|_{C^1}$ is uniformly bounded on $K$.
Exponentiating yields \eqref{eq:Omega-logpot}.
\end{proof}

\begin{remark}
The same argument applies to $\log(\zeta-t)$ with the branch convention of Section~\ref{sec:intro}, and yields
an analytic asymptotic representation for $\Omega_n(\zeta)$ on simply connected compact subsets of
$\C\setminus[A,B]$. We will only need the real-potential form \eqref{eq:Omega-logpot} for estimates.
\end{remark}

\section{Derivative asymptotics and the external field}
\label{sec:omega-derivative}

Recall $\omega_n(z)=\prod_{j=0}^{2n}(z-a_{n,j})$ with $a_{n,j}=n\alpha_{n,j}$, and
\[
\Omega_n(\zeta)=\prod_{j=0}^{2n}(\zeta-\alpha_{n,j}),
\qquad
\omega_n(n\zeta)=n^{2n+1}\Omega_n(\zeta).
\]

\subsection{Two representations for $\omega_n'(a_{n,j})$}
\label{subsec:omega-derivative-forms}

\begin{lemma}[Exact product formula]
\label{lem:omega-derivative-exact}
For each $j=0,1,\dots,2n$,
\begin{equation}
\label{eq:omega-derivative-exact}
\omega_n'(a_{n,j})=\prod_{\substack{k=0\\k\ne j}}^{2n}(a_{n,j}-a_{n,k})
=n^{2n}\prod_{\substack{k=0\\k\ne j}}^{2n}(\alpha_{n,j}-\alpha_{n,k})
=n^{2n}\,\Omega_n'(\alpha_{n,j}).
\end{equation}
In particular,
\[
\log|\omega_n'(a_{n,j})|=2n\log n+\sum_{\substack{k=0\\k\ne j}}^{2n}\log|\alpha_{n,j}-\alpha_{n,k}|.
\]
\end{lemma}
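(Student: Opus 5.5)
The plan is to differentiate the product $\omega_n(z)=\prod_{k=0}^{2n}(z-a_{n,k})$ by the Leibniz rule and then evaluate at a node. First I would write
\[
\omega_n'(z)=\sum_{i=0}^{2n}\prod_{\substack{k=0\\k\ne i}}^{2n}(z-a_{n,k}).
\]
Evaluating at $z=a_{n,j}$, every summand with $i\ne j$ still contains the factor $(z-a_{n,j})$, so it vanishes. Only the $i=j$ term survives, which gives the first equality in \eqref{eq:omega-derivative-exact}.

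Next I would substitute $a_{n,k}=n\alpha_{n,k}$. Each of the $2n$ factors then reads $a_{n,j}-a_{n,k}=n(\alpha_{n,j}-\alpha_{n,k})$, and pulling out the common factor $n$ from all $2n$ factors produces $n^{2n}$. This yields the middle expression.

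For the last equality I would apply the same Leibniz computation to $\Omega_n(\zeta)=\prod_k(\zeta-\alpha_{n,k})$ at $\zeta=\alpha_{n,j}$. This identifies $\prod_{k\ne j}(\alpha_{n,j}-\alpha_{n,k})$ with $\Omega_n'(\alpha_{n,j})$. As a consistency check, differentiating $\omega_n(n\zeta)=n^{2n+1}\Omega_n(\zeta)$ in $\zeta$ gives $n\,\omega_n'(n\zeta)=n^{2n+1}\Omega_n'(\zeta)$, hence $\omega_n'(a_{n,j})=n^{2n}\Omega_n'(\alpha_{n,j})$ independently.

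The logarithmic identity follows by taking absolute values and logarithms. This requires the nodes to be distinct, so that no factor vanishes, and distinctness is guaranteed by the strict spacing in Lemma~\ref{lem:quantile-spacing}. With that, $\log|\omega_n'(a_{n,j})|=2n\log n+\sum_{k\ne j}\log|\alpha_{n,j}-\alpha_{n,k}|$.

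There is no real obstacle. The only point needing care is counting the power of $n$ correctly: there are $2n$ factors, not $2n+1$.
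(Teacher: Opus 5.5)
Your proposal is correct and follows the same route as the paper's proof. You differentiate the product, observe that only the term omitting the factor $(z-a_{n,j})$ survives at $z=a_{n,j}$, and then get the rescaling from $a_{n,k}=n\alpha_{n,k}$ together with $\omega_n(n\zeta)=n^{2n+1}\Omega_n(\zeta)$. You also make explicit that the nodes must be distinct for the logarithmic identity, which the paper leaves implicit; distinctness already follows from the strict monotonicity of $F$.
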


\begin{proof}
Differentiate $\omega_n(z)=\prod_{k=0}^{2n}(z-a_{n,k})$ and evaluate at $z=a_{n,j}$, using that all factors
except the $k=j$ one remain:
\[
\omega_n'(a_{n,j})=\prod_{k\ne j}(a_{n,j}-a_{n,k}).
\]
The rescaling identities follow from $a_{n,k}=n\alpha_{n,k}$ and $\Omega_n(\zeta)=n^{-(2n+1)}\omega_n(n\zeta)$.
\end{proof}

\begin{lemma}[Logarithmic representations and the external field]
\label{lem:omega-derivative-log}

Let $\alpha=\alpha_{n,j}\in[A,B]$ be a quantile node. Then
\begin{align}
\label{eq:omega-derivative-log1}
\frac{1}{n}\log|\omega_n'(a_{n,j})|
&=
2\log n
+\int_A^B \log|\alpha-t|\,\kappa(t)\,dt
+O\!\left(\frac{\log n}{n}\right),
\\[0.3em]
\label{eq:omega-derivative-log2}
\frac{1}{n}\log|\omega_n'(a_{n,j})|
&=
2\log n
-\frac{1}{2n} \,V(\alpha)\;+\;O\!\left(\frac{\log n}{n}\right),
\end{align}
uniformly in $j=0,1,\dots,2n$.
\end{lemma}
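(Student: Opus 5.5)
The plan is to prove \eqref{eq:omega-derivative-log1} by an exact factorization that separates the logarithmic singularity on the diagonal from a smooth remainder. Formula \eqref{eq:omega-derivative-log2} should then follow from \eqref{eq:omega-derivative-log1} via the definition of the external field $V$ as a multiple of the logarithmic potential of $\kappa\,dt$. That definition is not yet stated in the displayed text, so I am assuming it is the normalization under which $-\tfrac{1}{2n}V(\alpha)$ equals $\int_A^B\log|\alpha-t|\,\kappa(t)\,dt$. By Lemma~\ref{lem:omega-derivative-exact},
\[
\frac1n\log|\omega_n'(a_{n,j})|=2\log n+\frac1n\sum_{k\ne j}\log|\alpha_{n,j}-\alpha_{n,k}|.
\]
So everything reduces to showing that
\[
\sum_{k\ne j}\log|\alpha_{n,j}-\alpha_{n,k}|=n\int_A^B\log|\alpha_{n,j}-t|\,\kappa(t)\,dt+O(\log n)
\]
uniformly in $j$. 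Lemma~\ref{lem:riemann-sum} cannot be applied directly, because $\psi(t)=\log|\alpha-t|$ is singular at a node.

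Pass to the quantile variable $u=F(t)$ and write $G=F^{-1}$, so that $\alpha_{n,k}=G(k/n)$. Define
\[
h(u,v)=\frac{G(u)-G(v)}{u-v}\quad (u\ne v),\qquad h(u,u)=G'(u)=\frac{1}{\kappa(G(u))}.
\]
Since $G$ is real-analytic on a neighborhood of $[0,2]$, the function $h$ is real-analytic on the square $[0,2]^2$, including the diagonal. It is also bounded between $1/\kappa_{\max}$ and $1/\kappa_{\min}$, so $\log h$ is $C^1$ on the square. With $u_j=j/n$ this gives the exact splitting
\[
\sum_{k\ne j}\log|\alpha_{n,j}-\alpha_{n,k}|
=\sum_{k\ne j}\log\frac{|j-k|}{n}+\sum_{k\ne j}\log h(u_j,k/n).
\]
The same change of variables splits the target integral as
\[
n\int_A^B\log|\alpha_{n,j}-t|\,\kappa(t)\,dt=n\int_0^2\log|u_j-v|\,dv+n\int_0^2\log h(u_j,v)\,dv.
\]

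I then match the two pieces separately.

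\textbf{Smooth piece.} Apply the trapezoidal/Riemann-sum estimate from the proof of Lemma~\ref{lem:riemann-sum} to $v\mapsto\log h(u_j,v)$, whose $C^1$ norm is bounded uniformly in $u_j$. This gives
\[
\sum_{k=0}^{2n}\log h(u_j,k/n)=n\int_0^2\log h(u_j,v)\,dv+O(1),
\]
and omitting the single term $k=j$ costs only another $O(1)$.

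\textbf{Singular piece.} This is explicit. On the sum side,
\[
\sum_{k\ne j}\log\frac{|j-k|}{n}=\log\bigl(j!\,(2n-j)!\bigr)-2n\log n .
\]
On the integral side, a direct computation gives
\[
n\int_0^2\log|u_j-v|\,dv=j\log\frac jn-j+(2n-j)\log\frac{2n-j}{n}-(2n-j).
\]
Stirling's formula in the form $\log m!=m\log m-m+O(\log(m+2))$, applied to $m=j$ and $m=2n-j$, shows that the two expressions differ by $O(\log n)$ uniformly in $0\le j\le 2n$. The $-2n\log n$ on the sum side matches the $\log n$ contributions $-j\log n-(2n-j)\log n$ on the integral side.

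The main obstacle is precisely this diagonal singularity. The factorization through $h$ is designed so that all non-smoothness sits in the explicitly computable kernel $\log|j-k|$. One point needs care: the $O(\log n)$ error must be uniform up to the endpoint indices $j=0$ and $j=2n$, which Stirling with the $\log(m+2)$ remainder handles. Adding the two pieces and dividing by $n$ yields \eqref{eq:omega-derivative-log1} with error $O(\log n/n)$ uniformly in $j$, and \eqref{eq:omega-derivative-log2} follows by substituting the definition of $V$.
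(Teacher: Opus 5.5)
Your proof of \eqref{eq:omega-derivative-log1} is correct, and it takes a genuinely different route from the paper.

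The paper splits $\sum_{k\ne j}\log|\alpha-\alpha_{n,k}|$ into two parts. The terms with $|k-j|\le1$ it bounds by $O(\log n)$ using the spacing lemma. The far part it compares with $n\int_A^B\log|\alpha-t|\,\kappa(t)\,dt$ by invoking the quantile Riemann-sum estimate ``after separating the singular contribution''. As written this is only a sketch. Lemma~\ref{lem:riemann-sum} needs $\psi\in C^1$, which $t\mapsto\log|\alpha-t|$ is not. Moreover, neither the discretization error on cells at distance $\asymp m/n$ from $\alpha$ (of order $1/m$ each, hence $O(\log n)$ in total) nor the integral over the excised cells is ever estimated.

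Your factorization $\alpha_{n,j}-\alpha_{n,k}=\frac{j-k}{n}\,h(j/n,k/n)$, with $\log h$ smooth on $[0,2]^2$, does two things. It puts the entire singularity into the explicit quantity $\log\bigl(j!\,(2n-j)!\bigr)-2n\log n$, which Stirling controls uniformly up to $j=0$ and $j=2n$. It leaves a genuinely $C^1$ Riemann sum, to which Lemma~\ref{lem:riemann-sum} legitimately applies. As a bonus, it identifies the error as $\tfrac12\log\bigl((j+1)(2n-j+1)\bigr)+O(1)$, so $O(\log n/n)$ is sharp in the bulk.

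The price is that you use the exact form $\alpha_{n,k}=F^{-1}(k/n)$ and some smoothness of $F^{-1}$ ($C^2$ suffices). The paper's near/far strategy can be completed by comparing each far term with the average of $\log|\alpha-t|$ over its quantile cell, taken against the probability measure $n\kappa(t)\,dt$. Done that way, it needs only $0<\kappa_{\min}\le\kappa\le\kappa_{\max}$.

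Your deduction of \eqref{eq:omega-derivative-log2}, however, does not go through.
\begin{itemize}
\item The paper's external field is the $n$-independent function $V(x)=-2\int_A^B\log|x-t|\,\kappa(t)\,dt$. This is the definition used in the paper's proof of this lemma and in Section~\ref{sec:equilibrium}. It is also the $V=\Re\Van$ that enters Lemma~\ref{lem:En-def} with the factor $\tfrac n2$.
\item For such $V$, the term $-\tfrac{1}{2n}V(\alpha)=O(1/n)$ is swallowed by the error term. So \eqref{eq:omega-derivative-log2} would say $\frac1n\log|\omega_n'(a_{n,j})|=2\log n+O(\log n/n)$.
\item Combined with \eqref{eq:omega-derivative-log1}, the density of the nodes in $[A,B]$, and continuity of $V$, this forces $V\equiv0$ on $[A,B]$. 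That is impossible: $\tfrac12\kappa\,dt$ would then be the equilibrium measure of $[A,B]$, whose density is unbounded.
\end{itemize}

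So the printed formula carries a spurious factor $1/n$. What \eqref{eq:omega-derivative-log1} actually yields is $\frac1n\log|\omega_n'(a_{n,j})|=2\log n-\tfrac12V(\alpha)+O(\log n/n)$, i.e.\ $|\omega_n'(a_{n,j})|=n^{2n}e^{-nV(\alpha_{n,j})/2}\,n^{O(1)}$. The same slip reappears in Proposition~\ref{prop:omega-derivative-exp}, where $e^{-V/2}$ should read $e^{-nV/2}$.

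The paper's one-line ``follows from the definition of $V$'' commits exactly this error. You should not repair it by postulating the normalization $V=-2n\int\log|\cdot-t|\,\kappa(t)\,dt$, which is not the paper's field. A quick check would have exposed the inconsistency: with an $n$-independent $V$, the term $\tfrac{1}{2n}V$ is smaller than the stated error. The correct course is to state and prove the corrected form, which is immediate from your \eqref{eq:omega-derivative-log1}.
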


\begin{proof}
Start from Lemma~\ref{lem:omega-derivative-exact}:
\[
\log|\omega_n'(a_{n,j})|=2n\log n+\sum_{k\ne j}\log|\alpha-\alpha_{n,k}|.
\]
Fix $j$. Split the sum into a ``near'' part $|k-j|\le 1$ and a ``far'' part $|k-j|\ge 2$.
By Lemma~\ref{lem:quantile-spacing}, $\alpha_{n,k}$ are spaced $\asymp 1/n$ and
\[
\sum_{|k-j|\le 1}\log|\alpha-\alpha_{n,k}| = O(\log n).
\]
For the far part, the function $t\mapsto \log|\alpha-t|$ is integrable on $[A,B]$ and locally $C^1$
away from $t=\alpha$, and the quantile discretization yields
\[
\frac{1}{n}\sum_{k=0}^{2n}\log|\alpha-\alpha_{n,k}|
=
\int_A^B \log|\alpha-t|\,\kappa(t)\,dt + O\!\left(\frac{1}{n}\right)
\]
with the $O(1/n)$ uniform in $\alpha=\alpha_{n,j}$ after separating the $O(\log n)$ singular contribution
coming from the indices $k$ near $j$. Combining these bounds gives \eqref{eq:omega-derivative-log1}.
Finally, \eqref{eq:omega-derivative-log2} follows from the definition $V(\alpha)=-2\int_A^B\log|\alpha-t|\kappa(t)\,dt$.
\end{proof}

\subsection{Exponential form and uniformity in $j$}
\label{subsec:omega-derivative-exp}

\begin{proposition}[Exponential asymptotics for $\omega_n'(a_{n,j})$]
\label{prop:omega-derivative-exp}
Uniformly for $j=0,1,\dots,2n$,
\begin{equation}
\label{eq:omega-derivative-exp}
|\omega_n'(a_{n,j})|
=
n^{2n}\exp\!\left(-\frac{1}{2}\,V(\alpha_{n,j})\right)\,
\exp\!\bigl(O(\log n)\bigr),
\qquad n\to\infty.
\end{equation}
Equivalently,
\begin{equation}
\label{eq:omega-derivative-exp-n}
|\omega_n'(a_{n,j})|
=
n^{2n}\exp\!\left(-\frac{1}{2}\,V(\alpha_{n,j})\right)\,
n^{O(1)}.
\end{equation}
\end{proposition}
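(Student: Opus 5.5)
The plan is to obtain the proposition directly from Lemma~\ref{lem:omega-derivative-log} by multiplying by $n$ and exponentiating. No new analytic input should be needed, since all the discretization work (the near/far splitting and the Riemann-sum comparison) is already packaged in that lemma.

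\emph{Step 1.} Take \eqref{eq:omega-derivative-log2}, which holds uniformly in $j=0,\dots,2n$, and multiply through by $n$:
\[
\log|\omega_n'(a_{n,j})| = 2n\log n - \tfrac12 V(\alpha_{n,j}) + O(\log n).
\]
The error term $n\cdot O(\log n/n)=O(\log n)$ keeps its uniformity in $j$, because the implied constant in the lemma does not depend on $j$.

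\emph{Step 2.} Exponentiate to get \eqref{eq:omega-derivative-exp}. Then rewrite $\exp(O(\log n))=n^{O(1)}$ to get \eqref{eq:omega-derivative-exp-n}. Here $n^{O(1)}$ means a factor lying between $n^{-C}$ and $n^{C}$, with $C$ independent of $j$.

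\emph{Main obstacle: bookkeeping of the normalization of $V$.} Using \eqref{eq:omega-derivative-log1} together with the definition $V(\alpha)=-2\int_A^B\log|\alpha-t|\kappa(t)\,dt$, the main term after multiplication by $n$ is $n\int_A^B\log|\alpha-t|\kappa(t)\,dt=-\tfrac n2 V(\alpha)$. This is of order $n$, not $-\tfrac12V(\alpha)$.

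I would therefore first fix the convention for $V$ consistently between \eqref{eq:omega-derivative-log1} and \eqref{eq:omega-derivative-log2}. There are two options, and I would use whichever matches the later $g$-function normalization.
\begin{itemize}
\item Read $V$ in the proposition as the $n$-scaled field $nV$. The statement then becomes $|\omega_n'(a_{n,j})|=n^{2n}e^{-nV(\alpha_{n,j})/2}n^{O(1)}$, which is exactly what Step~1 yields from \eqref{eq:omega-derivative-log1}.
\item Keep the proposition literally as written. It then follows verbatim from \eqref{eq:omega-derivative-log2}. However, since $V$ is bounded on $[A,B]$, this literal form is only consistent with \eqref{eq:omega-derivative-log1} if the $n$-scaled field is what is meant.
\end{itemize}

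In either reading, the only genuinely analytic point is the uniformity of the $O(\log n)$ term near the endpoints $j=0$ and $j=2n$. There the far sum is one-sided, and the Riemann-sum comparison for the integrable singularity $\log|\alpha-t|$ must still give an $O(\log n)$ error. I would check this by bounding the local contribution $\sum_{0<|k-j|\le M}\log|\alpha_{n,j}-\alpha_{n,k}|$ with the spacing bounds \eqref{eq:quantile-spacing}, which gives $O(\log n)$ for fixed $M$. The remaining terms are handled by monotone comparison with the integral on each cell $[\alpha_{n,k},\alpha_{n,k+1}]$.
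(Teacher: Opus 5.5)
Your route (multiply the logarithmic asymptotics by $n$ and exponentiate) is the paper's route: its proof is the single line ``immediate reformulation of \eqref{eq:omega-derivative-log2}''. The normalization problem you raise, however, is not a choice of convention. It is an error, and the proposition as literally stated is false.

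\textbf{The gap.} With $V(\alpha)=-2\int_A^B\log|\alpha-t|\,\kappa(t)\,dt$, equation \eqref{eq:omega-derivative-log1} produces the term $-\tfrac12 V(\alpha)$, not the $-\tfrac{1}{2n}V(\alpha)$ written in \eqref{eq:omega-derivative-log2}. So \eqref{eq:omega-derivative-log2} is wrong. Because $V$ is bounded on $[A,B]$, the literal claim reduces to $|\omega_n'(a_{n,j})|=n^{2n}n^{O(1)}$, and this fails. Take $[A,B]=[1,3]$ and $\kappa\equiv 1$. Then $a_{n,j}=n+j$ and
\[
|\omega_n'(a_{n,n})|=(n!)^2=2\pi n\,n^{2n}e^{-2n}\bigl(1+o(1)\bigr),
\]
while $V(2)=4$. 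The statement would therefore require $e^{-2n}=n^{O(1)}$. Your second option (``keep the proposition literally; it follows verbatim from \eqref{eq:omega-derivative-log2}'') is not available: it derives a false statement from a false lemma. Your Steps~1--2, which start from \eqref{eq:omega-derivative-log2}, inherit the same defect.

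\textbf{The correct statement.} The provable result, uniform in $j$, is
\[
|\omega_n'(a_{n,j})|=n^{2n}\exp\bigl(-\tfrac n2\,V(\alpha_{n,j})\bigr)\,n^{O(1)}.
\]
You get it by running your Step~1 on \eqref{eq:omega-derivative-log1} rather than on \eqref{eq:omega-derivative-log2}. It is also the scaling the paper actually uses later: Lemma~\ref{lem:En-def} has the factor $\exp(\frac n2\Van)$. It is cleaner to correct the exponent to $-\frac n2 V$ than to reinterpret $V$ as $nV$, since $V$ is fixed by the equilibrium problem.

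\textbf{Your uniformity argument.} Your plan for uniformity of the $O(\log n)$ error is sound. It supplies more than the paper, whose proof of \eqref{eq:omega-derivative-log1} asserts an $O(1/n)$ Riemann-sum error for the singular integrand $\log|\alpha-t|$ without justification. Your cell-by-cell comparison works as follows.
\begin{itemize}[leftmargin=2.2em]
\item Each cell $[\alpha_{n,k},\alpha_{n,k+1}]$ carries $\kappa$-mass $1/n$. For a cell not adjacent to $\alpha=\alpha_{n,j}$, the average $n\int_{\alpha_{n,k}}^{\alpha_{n,k+1}}\log|\alpha-t|\,\kappa(t)\,dt$ therefore lies between $\log|\alpha-\alpha_{n,k}|$ and $\log|\alpha-\alpha_{n,k+1}|$.
\item Summing on each side of $\alpha$ telescopes the discrepancy into boundary terms of size at most $|\log|\alpha_{n,j\pm1}-\alpha||+|\log(B-A)|\lesssim\log n$.
\item The two cells adjacent to $\alpha$ contribute $O(\log n)$ by \eqref{eq:quantile-spacing}.
\item The endpoint indices $j=0$ and $j=2n$ need no separate treatment: a one-sided sum simply has one fewer boundary term.
\end{itemize}
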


\begin{proof}
This is an immediate reformulation of Lemma~\eqref{eq:omega-derivative-log2}.
\end{proof}

\begin{remark}[Uniformity in the node index]
\label{rem:omega-derivative-uniformity}
All $O(\log n)$ (equivalently $n^{O(1)}$) factors in
\eqref{eq:omega-derivative-exp}--\eqref{eq:omega-derivative-exp-n}
are uniform in $j=0,\dots,2n$, since the quantile spacing is uniform and $\kappa$ is bounded above and below
on $[A,B]$.
\end{remark}

\subsection{A polynomial bound for $\zeta(s,a)$ on node scales}
\label{subsec:zeta-bound}

\begin{lemma}[Polynomial bound on $\zeta(s,a)$ for $\Re s>1$]
\label{lem:hurwitz-poly-bound}
Fix $\Re s>1$ and $0<A<B<\infty$. Then there exists $C=C(s,A,B)>0$ such that for all $n\in\N$ and all
$\alpha\in[A,B]$,
\begin{equation}
\label{eq:hurwitz-poly-bound}
|\zeta(s,n\alpha)|\le C\,n^{1-\Re s}.
\end{equation}
In particular, $\log|\zeta(s,n\alpha)|=O(\log n)=o(n)$ uniformly for $\alpha\in[A,B]$.
\end{lemma}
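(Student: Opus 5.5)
The plan is to bound the series defining the Hurwitz zeta function term by term and compare it with an integral. Write $\sigma:=\Re s>1$ and $x:=n\alpha$. For $x>0$ we have $\zeta(s,x)=\sum_{k\ge0}(k+x)^{-s}$, and $|(k+x)^{-s}|=(k+x)^{-\sigma}$ because $k+x>0$. Hence
\[
|\zeta(s,x)|\le \sum_{k=0}^{\infty}(k+x)^{-\sigma}=\zeta(\sigma,x),
\]
and the problem reduces to the real parameter $\sigma>1$.

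Next I compare the sum with an integral. The function $t\mapsto(t+x)^{-\sigma}$ is decreasing on $[0,\infty)$. Bounding each term with $k\ge1$ by the integral over $[k-1,k]$ gives
\[
\zeta(\sigma,x)\le x^{-\sigma}+\int_0^\infty (t+x)^{-\sigma}\,dt = x^{-\sigma}+\frac{x^{1-\sigma}}{\sigma-1}.
\]

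Now substitute $x=n\alpha$ with $\alpha\in[A,B]$ and $n\ge1$. Since $A>0$, we get $x^{1-\sigma}\le A^{1-\sigma}n^{1-\sigma}$. Also $x^{-\sigma}=x^{-1}x^{1-\sigma}\le A^{-1}x^{1-\sigma}$ because $x\ge A$. Therefore \eqref{eq:hurwitz-poly-bound} holds with
\[
C=A^{1-\sigma}\Bigl(A^{-1}+\frac{1}{\sigma-1}\Bigr),
\]
which depends only on $s$ and $A$ (and trivially on $B$). The only point needing care is that $|(k+x)^{-s}|=(k+x)^{-\Re s}$, which uses the principal branch and $k+x>0$.

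For the final claim, an upper bound on $|\zeta|$ alone is not enough to give $\log|\zeta(s,n\alpha)|=O(\log n)$; we also need a lower bound. The Euler--Maclaurin formula gives
\[
\zeta(s,x)=\frac{x^{1-s}}{s-1}+\tfrac12 x^{-s}+O\bigl(|s|\,x^{-\sigma-1}\bigr),
\]
so $|\zeta(s,x)|\ge c\,x^{1-\sigma}$ for $x\ge x_0(s)$. For $n\ge x_0/A$ this yields $|\log|\zeta(s,n\alpha)||\le (\sigma-1)\log n+O(1)$, uniformly in $\alpha\in[A,B]$. This lower bound is the main obstacle: it depends on the leading Euler--Maclaurin term dominating, which holds because $s\neq1$. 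The finitely many small $n$ are irrelevant to the asymptotic statement, provided $\zeta(s,n\alpha)\ne0$ there.
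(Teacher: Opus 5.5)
Your argument for \eqref{eq:hurwitz-poly-bound} is the same as the paper's. You bound each term by $|(k+x)^{-s}|=(k+x)^{-\Re s}$ and compare the decreasing series with an integral.

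Your execution is slightly cleaner. You integrate from $nA$, whereas the paper writes $\int_{nA-1}^\infty x^{-\Re s}\,dx$. That integral is infinite for $nA\le 1$, and it is only $\le Cn^{1-\Re s}$ once $nA-1\ge c\,n$. So the paper tacitly has to absorb finitely many small $n$ into the constant. Your constant $C=A^{1-\Re s}\bigl(A^{-1}+1/(\Re s-1)\bigr)$ works for every $n\ge 1$.

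Where you genuinely go beyond the paper is the ``in particular'' clause. The paper simply asserts it after the upper bound. By itself, the upper bound only gives the one-sided estimate $\log|\zeta(s,n\alpha)|\le(1-\Re s)\log n+O(1)$. That one-sided bound is all the paper uses later, e.g.\ in the Hermite--Walsh estimate for $L_n$.

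Your Euler--Maclaurin lower bound $|\zeta(s,x)|\ge c\,x^{1-\Re s}$ for $x\ge x_0(s)$, combined with $n\alpha\le nB$, correctly yields the two-sided bound $|\log|\zeta(s,n\alpha)||\le(\Re s-1)\log n+O(1)$ for large $n$, uniformly in $\alpha\in[A,B]$. You only need $\zeta(s,x)=\frac{x^{1-s}}{s-1}+O_s(x^{-\Re s})$. The first-order Euler--Maclaurin formula gives this directly, with remainder at most $\bigl(\tfrac12+\tfrac{|s|}{2\Re s}\bigr)x^{-\Re s}$, so the precise $s$-dependence of your stated remainder term does not matter. Your caveat about possible zeros for finitely many small $n$ is also appropriate, since the two-sided statement can only hold asymptotically.
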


\begin{proof}
For $\Re s>1$ the Hurwitz zeta function admits the absolutely convergent series
\[
\zeta(s,a)=\sum_{m=0}^\infty (m+a)^{-s},\qquad \Re a>0.
\]
Hence for $z=n\zeta$ with $\zeta\in[A,B]$ we estimate
\[
|\zeta(s,n\alpha)|
\le \sum_{m=0}^\infty |m+n\alpha|^{-\Re s}
\le \sum_{m=0}^\infty (m+nA)^{-\Re s}.
\]
Comparing the tail with an integral gives
\[
\sum_{m=0}^\infty (m+nA)^{-\Re s}
\le (nA)^{-\Re s}+\int_{nA-1}^{\infty}x^{-\Re s}\,dx
\le C\,n^{1-\Re s},
\]
for a constant $C$ depending only on $s$ and $A$.
\end{proof}

\begin{remark}
The estimate \eqref{eq:hurwitz-poly-bound} is only used to ensure that factors involving $\zeta(s,a_{n,j})$
contribute at most subexponentially in $n$ in the steepest descent analysis.
\end{remark}

\section{Pole removal and contour compression}
\label{sec:pole-removal-compression}

\subsection{Pole removal}
\label{subsec:pole-removal}

Starting from the pole RHP for $Y$ (Problem~\ref{prob:Y-pole}), we remove the poles by introducing a
piecewise-analytic transformation that cancels the residues at the nodes.

Let $\{\mathcal C_{n,j}\}_{j=0}^{2n}$ be pairwise disjoint positively oriented small circles,
$\mathcal C_{n,j}=\partial B(a_{n,j},r_{n,j})$, chosen so that each disk $B(a_{n,j},r_{n,j})$ contains only
the node $a_{n,j}$ and lies strictly inside the contour $\Gamma_n$ fixed in
Section~\ref{subsec:choice-Gamma}. Define
\begin{equation}
\label{eq:Y-to-T}
T(z):=
\begin{cases}
Y(z)\begin{pmatrix}1&-\dfrac{w_{n,j}}{z-a_{n,j}}\\[0.6em]0&1\end{pmatrix},
& z\in B(a_{n,j},r_{n,j}) \ \text{for some } j,\\[1.2em]
Y(z), & z\in\C\setminus\bigcup_{j=0}^{2n} B(a_{n,j},r_{n,j}).
\end{cases}
\end{equation}

\begin{lemma}[Pole removal]
\label{lem:pole-removal}
The function $T$ is analytic in $\C\setminus\bigcup_{j=0}^{2n}\mathcal C_{n,j}$ and satisfies a pure jump
RHP with jumps
\begin{equation}
\label{eq:T-jumps-circles}
T_+(z)=T_-(z)\,J_{T,j}(z),\qquad z\in\mathcal C_{n,j},
\end{equation}
where
\begin{equation}
\label{eq:JTj}
J_{T,j}(z)=
\begin{pmatrix}
1 & -\dfrac{w_{n,j}}{z-a_{n,j}}\\[0.8em]
0 & 1
\end{pmatrix},
\qquad j=0,1,\dots,2n.
\end{equation}
Moreover, $T$ has the same normalization at infinity as $Y$, namely
$T(z)=(I+O(z^{-1}))z^{n\sigma_3}$.
\end{lemma}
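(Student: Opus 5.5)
The plan is to verify the three claims in turn: analyticity away from the circles, the jump relation on each $\mathcal C_{n,j}$, and the behaviour at infinity. Each is a local computation on the explicit definition \eqref{eq:Y-to-T} combined with the residue condition \eqref{eq:Y-residue}.

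\textbf{Analyticity outside the disks.} On $\C\setminus\bigcup_j \overline{B(a_{n,j},r_{n,j})}$ we have $T=Y$. This region contains no node, so $T$ is analytic there by (Y1).

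\textbf{Removability inside a disk.} The key step is inside $B(a_{n,j},r_{n,j})$. There $Y$ has at most a simple pole at $a_{n,j}$. Since the disks are pairwise disjoint and each contains only $a_{n,j}$, there are no other singularities. Write $Y(z)=\frac{R_j}{z-a_{n,j}}+H(z)$ with $H$ analytic near $a_{n,j}$. Then \eqref{eq:Y-residue} reads $R_j=H(a_{n,j})E_j$, where $E_j=\begin{pmatrix}0&w_{n,j}\\0&0\end{pmatrix}$. For this I interpret $\lim_{z\to a_{n,j}}Y(z)E_j$ as the finite limit, noting that $R_jE_j=0$: indeed the first column of $R_j$ vanishes because the first column of $Y$ is polynomial, which is consistent with Lemma~\ref{lem:Y-from-PQ}. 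Hence $E_j^2=0$, and therefore $R_jE_j=H(a_{n,j})E_j^2=0$.

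Now
\[
T(z)=Y(z)\Bigl(I-\frac{E_j}{z-a_{n,j}}\Bigr)
=\frac{R_j-HE_j}{z-a_{n,j}}-\frac{R_jE_j}{(z-a_{n,j})^2}+H.
\]
The double-pole term vanishes. The simple-pole coefficient is $R_j-H(a_{n,j})E_j=0$, with the remainder $(H(z)-H(a_{n,j}))E_j/(z-a_{n,j})$ analytic. So $a_{n,j}$ is a removable singularity, and $T$ is analytic in each open disk.

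\textbf{The jump on the circles.} On $\mathcal C_{n,j}$, positively oriented, the $+$ side is the interior, so $T_+=Y\,(I-E_j/(z-a_{n,j}))$ and $T_-=Y$. This gives $T_+=T_-J_{T,j}$ with $J_{T,j}$ as in \eqref{eq:JTj}. Both boundary values are continuous, since $Y$ is analytic in a neighborhood of the circle.

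\textbf{Normalization at infinity.} The union of the disks is bounded (it lies inside $\Gamma_n$), so $T=Y$ near infinity. Hence \eqref{eq:Y-infty} transfers verbatim.

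\textbf{Main obstacle.} The only delicate point is making the residue condition precise: showing $\lim_{z\to a_{n,j}} Y E_j$ exists, and justifying the vanishing of the double-pole term $R_jE_j$. I would settle both by noting that $YE_j$ has first column zero and second column equal to $w_{n,j}$ times the first column of $Y$, which is analytic at $a_{n,j}$. This makes the limit finite and gives $R_jE_j=0$ directly.
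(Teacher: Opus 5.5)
Your proposal is correct and follows the paper's own proof. The paper also argues by residue cancellation inside each disk, reads the jump off the definition (interior as the $+$ side), and transfers the normalization at infinity unchanged. Your Laurent-expansion bookkeeping, including the vanishing of the double-pole term $R_jE_j$ because the first column of $Y$ is analytic at the node, simply makes explicit what the paper states in one line. Your justification at infinity, that $T=Y$ outside a bounded set, is also cleaner than the paper's remark that the triangular factors are $I+O(z^{-1})$ for large $z$.
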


\begin{proof}
Inside $B(a_{n,j},r_{n,j})$, the factor in \eqref{eq:Y-to-T} has a simple pole at $a_{n,j}$ whose residue
cancels the residue of $Y$ in \eqref{eq:Y-residue}; hence $T$ is analytic at $a_{n,j}$.
Across $\mathcal C_{n,j}$ the definition \eqref{eq:Y-to-T} produces the jump \eqref{eq:JTj}.
The normalization at infinity is unchanged because the triangular factors in \eqref{eq:Y-to-T} are
$I+O(z^{-1})$ uniformly for large $z$.
\end{proof}

\subsection{Choice of the compression contour $\Gamma$}
\label{subsec:choice-Gamma}

We fix once and for all a simply connected domain $\mathcal D\Subset\C\setminus(-\infty,0]$
with Jordan boundary $\Gamma:=\partial\mathcal D$ such that
\begin{equation}
\label{eq:D-choice}
[A,B]\subset \mathcal D,
\qquad
\dist(\Gamma,[A,B])>0.
\end{equation}
For each $n$ we set
\begin{equation}
\label{eq:Gamma-n-def}
\Gamma_n:=n\Gamma=\{\,z=n\zeta:\ \zeta\in\Gamma\,\}.
\end{equation}
Since $\alpha_{n,j}\in[A,B]$ for all $j$ (by definition of the quantiles \eqref{eq:quantiles}),
it follows that all nodes $a_{n,j}=n\alpha_{n,j}$ lie strictly inside $\Gamma_n$.

We choose the pole-removal circles $\mathcal C_{n,j}=\partial B(a_{n,j},r_{n,j})$ so that they are pairwise
disjoint, contained in the interior of $\Gamma_n$, and such that the annulus between
$\bigcup_{j=0}^{2n}\mathcal C_{n,j}$ and $\Gamma_n$ does not intersect the cut $(-\infty,0]$.
This is possible because $\dist(\Gamma,(-\infty,0]\cup[A,B])>0$ and the nodes are separated at scale $\asymp n$
in the $z$-plane (Lemma~\ref{lem:quantile-spacing}).

A concrete choice is, for $1\le j\le 2n-1$,
\[
r_{n,j}:=\frac14\min\{a_{n,j}-a_{n,j-1},\,a_{n,j+1}-a_{n,j},\,\dist(a_{n,j},\Gamma_n)\},
\]
with $r_{n,0}:=\frac14\min\{a_{n,1}-a_{n,0},\,\dist(a_{n,0},\Gamma_n)\}$ and
$r_{n,2n}:=\frac14\min\{a_{n,2n}-a_{n,2n-1},\,\dist(a_{n,2n},\Gamma_n)\}$.
Then the disks are pairwise disjoint, each contains only $a_{n,j}$, and they lie inside $\Gamma_n$
(using Lemma~\ref{lem:quantile-spacing}).

Finally, after scaling back to the $\zeta$-plane, we work with the fixed contour $\Gamma$ and view all
compressed jumps as living on $\Gamma$ (Section~\ref{subsec:compression-barycentric}).

\subsection{Compression and the barycentric identity $W_n=L_n/\omega_n$}
\label{subsec:compression-barycentric}

After pole removal, the jump contour is the union of the small circles $\mathcal C_{n,j}$.
Since the jump matrices $J_{T,j}$ are analytic in the annulus between $\mathcal C_{n,j}$ and $\Gamma_n$,
we may compress the contour from the collection of circles onto the single contour $\Gamma_n$.

\begin{lemma}[Contour compression]
\label{lem:compression}
There exists a matrix function $\widetilde T$, analytic in $\C\setminus \Gamma_n$, with the same
normalization at infinity as $T$, whose jump on $\Gamma_n$ is triangular:

\begin{equation}
\label{eq:Gamma-jump}
\widetilde T_+(z)=\widetilde T_-(z)
\begin{pmatrix}
1 & W_n(z)\\
0 & 1
\end{pmatrix},
\qquad z\in\Gamma_n,
\end{equation}

where
\begin{equation}
\label{eq:Wn-def}
W_n(z):=\sum_{j=0}^{2n}\frac{w_{n,j}}{z-a_{n,j}}.
\end{equation}
\end{lemma}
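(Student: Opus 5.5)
The plan is to define $\widetilde T$ directly from $Y$ instead of deforming circles. Let $\mathrm{Int}(\Gamma_n)$ denote the bounded component of $\C\setminus\Gamma_n$; by the choice in Section~\ref{subsec:choice-Gamma} it contains all nodes $a_{n,j}$. I would set
\[
\widetilde T(z):=
\begin{cases}
Y(z)\begin{pmatrix}1&-W_n(z)\\0&1\end{pmatrix}, & z\in\mathrm{Int}(\Gamma_n),\\[0.8em]
Y(z), & z\in\C\setminus\overline{\mathrm{Int}(\Gamma_n)},
\end{cases}
\]
with $W_n$ as in \eqref{eq:Wn-def}. Equivalently, $\widetilde T$ is $T$ from \eqref{eq:Y-to-T} multiplied inside $\Gamma_n$ (but outside the disks) by $\begin{pmatrix}1&-W_n\\0&1\end{pmatrix}$, and inside each disk by $\begin{pmatrix}1&-(W_n-w_{n,j}/(z-a_{n,j}))\\0&1\end{pmatrix}$. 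The second factor is analytic in the disk, so the jumps on the circles $\mathcal C_{n,j}$ cancel. This explains the phrase ``compression''.

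\textbf{Step 1: analyticity inside $\Gamma_n$.} Away from the nodes, $\widetilde T$ is clearly analytic in $\mathrm{Int}(\Gamma_n)$. Near $a_{n,j}$ I would write $W_n(z)=w_{n,j}/(z-a_{n,j})+h_j(z)$ with $h_j$ analytic near $a_{n,j}$. Then
\[
\widetilde T=Y\begin{pmatrix}1&-w_{n,j}/(z-a_{n,j})\\0&1\end{pmatrix}\begin{pmatrix}1&-h_j\\0&1\end{pmatrix}.
\]
By Lemma~\ref{lem:pole-removal}, the first product is $T$, which is analytic at $a_{n,j}$. The same conclusion can be checked entrywise. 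The first column of $Y$ is polynomial and unchanged. The second column becomes $Y_{\cdot 2}-Y_{\cdot 1}W_n$. Its singular part at $a_{n,j}$ is
\[
\frac{Y_{\cdot 1}(a_{n,j})\,w_{n,j}}{z-a_{n,j}}-\frac{Y_{\cdot 1}(a_{n,j})\,w_{n,j}}{z-a_{n,j}}=0,
\]
using the residue condition \eqref{eq:Y-residue}, which reads $\Res Y_{\cdot 2}=Y_{\cdot 1}(a_{n,j})w_{n,j}$. Hence the singularities are removable.

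\textbf{Step 2: jump and normalization.} Outside $\Gamma_n$, $\widetilde T=Y$ is analytic because there are no nodes there. Orient $\Gamma_n$ positively, so the $+$ side is the interior. Then $\widetilde T_+=\widetilde T_-\begin{pmatrix}1&-W_n\\0&1\end{pmatrix}$. To match the sign in \eqref{eq:Gamma-jump}, I would either take the clockwise orientation of $\Gamma_n$ or equivalently invert the triangular factor; I would state the orientation convention explicitly. The normalization at infinity is inherited from \eqref{eq:Y-infty}, since $\widetilde T=Y$ near $\infty$.

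\textbf{Main obstacle.} The only delicate point is bookkeeping. Consistency with the circle jumps of Lemma~\ref{lem:pole-removal} must be made precise, namely that $T$ and $\widetilde T$ differ by a triangular factor analytic off $\Gamma_n$ and off the circles. The orientation and sign convention on $\Gamma_n$ also has to be fixed. The analytic content reduces entirely to the residue cancellation in Step 1.
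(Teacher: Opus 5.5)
Your proof is correct and is essentially the paper's construction made explicit. The paper multiplies $T$ inside $\Gamma_n$ by a piecewise-analytic unipotent upper-triangular factor that absorbs the circle jumps; your factor $\begin{psmallmatrix}1&-W_n\\0&1\end{psmallmatrix}$ applied to $Y$ (equivalently, the factors you list applied to $T$) is exactly such a factor, and your residue cancellation supplies the ``direct residue computation'' the paper only alludes to.

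Your orientation remark is also right: with positively oriented circles and $\Gamma_n$ one gets $-W_n$, so \eqref{eq:Gamma-jump} as written needs $\Gamma_n$ oriented clockwise. Make sure ``inverting the triangular factor'' means reading the same relation with the sides swapped, not putting $+W_n$ into the definition of $\widetilde T$, which would double rather than cancel the residues at the nodes.
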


\begin{proof}
The compression is standard (see, e.g., \cite{KuijlaarsSurvey,DeiftBook}): one defines $\widetilde T$ 
by multiplying $T$ with a piecewise-analytic triangular factor in the interior of $\Gamma_n$ 
which accumulates the circle jumps and is analytic away from the nodes. 
The resulting jump on $\Gamma_n$ is the product of the local jumps, and because all
$J_{T,j}$ are upper triangular with unit diagonal, their product is upper triangular with unit diagonal.
A direct residue computation yields the upper-right entry as the partial fraction 
sum \eqref{eq:Wn-def} \cite{DeiftZhou,DeiftBook}.
\end{proof}

\begin{remark}
In the scaled $\zeta$-plane with $\zeta=z/n$, we work with the fixed contour $\Gamma=\partial\mathcal D$ and
interpret $W_n(n\zeta)$ as a function of $\zeta$ defined on $\Gamma$.
\end{remark}

\begin{lemma}[Barycentric identity]
\label{lem:barycentric-identity}
Let $w_{n,j}=f(a_{n,j})/\omega_n'(a_{n,j})$ and define $W_n$ by \eqref{eq:Wn-def}. Then
\begin{equation}
\label{eq:Wn-barycentric}
W_n(z)=\frac{L_n(z)}{\omega_n(z)},
\end{equation}
where $L_n$ is the unique polynomial of degree at most $2n$ interpolating the function
$a\mapsto f(a)$ at the nodes with barycentric weights, explicitly
\begin{equation}
\label{eq:Ln-def}
L_n(z):=\sum_{j=0}^{2n} f(a_{n,j})\,\frac{\omega_n(z)}{(z-a_{n,j})\,\omega_n'(a_{n,j})}.
\end{equation}
In particular, $L_n(a_{n,j})=f(a_{n,j})$ for all $j$.
\end{lemma}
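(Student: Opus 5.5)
The plan is to verify the identity $W_n=L_n/\omega_n$ directly from the definitions \eqref{eq:Wn-def} and \eqref{eq:Ln-def}, and then to read off the interpolation property from the product formula of Lemma~\ref{lem:omega-derivative-exact}.

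\textbf{Step 1: the identity.} Substituting $w_{n,j}=f(a_{n,j})/\omega_n'(a_{n,j})$ into \eqref{eq:Wn-def} and multiplying by $\omega_n(z)$ gives, for $z$ away from the nodes,
\[
\omega_n(z)W_n(z)=\sum_{j=0}^{2n} f(a_{n,j})\,\frac{\omega_n(z)}{(z-a_{n,j})\,\omega_n'(a_{n,j})}=L_n(z).
\]
Both sides are rational, so the identity \eqref{eq:Wn-barycentric} holds as an identity of rational functions.

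\textbf{Step 2: degree of $L_n$.} Each quotient $\omega_n(z)/(z-a_{n,j})=\prod_{k\ne j}(z-a_{n,k})$ is a polynomial of degree exactly $2n$. Hence $L_n$ is a polynomial of degree at most $2n$.

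\textbf{Step 3: interpolation.} Denote the Lagrange basis polynomial by
\[
\ell_j(z)=\frac{\prod_{k\ne j}(z-a_{n,k})}{\omega_n'(a_{n,j})}.
\]
At a node $a_{n,i}$ with $i\ne j$, the product contains the vanishing factor $a_{n,i}-a_{n,i}$, so $\ell_j(a_{n,i})=0$. At $z=a_{n,j}$, the product equals $\omega_n'(a_{n,j})$ by \eqref{eq:omega-derivative-exact}, so $\ell_j(a_{n,j})=1$. Therefore $L_n(a_{n,i})=f(a_{n,i})$ for every $i$.

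\textbf{Step 4: uniqueness.} Suppose two polynomials of degree at most $2n$ agree at the $2n+1$ distinct nodes. Their difference has $2n+1$ zeros but degree at most $2n$, so it vanishes identically. Hence $L_n$ is the unique interpolant.

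No step is a real obstacle. The only point needing care is that $\omega_n'(a_{n,j})\neq0$, which holds because the nodes are distinct by Lemma~\ref{lem:quantile-spacing}.
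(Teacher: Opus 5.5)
Your proposal is correct and follows the paper's proof. Like the paper, you substitute $w_{n,j}=f(a_{n,j})/\omega_n'(a_{n,j})$ into \eqref{eq:Wn-def} to obtain $\omega_n W_n=L_n$, and you read off $L_n(a_{n,k})=f(a_{n,k})$ from the Lagrange basis identity. Your Steps~2 and~4 (the degree bound and the uniqueness of the interpolant) just make explicit what the paper leaves implicit.
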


\begin{proof}
Using \eqref{eq:weights-def}, we rewrite
\[
W_n(z)=\sum_{j=0}^{2n}\frac{f(a_{n,j})}{\omega_n'(a_{n,j})}\,\frac{1}{z-a_{n,j}}
=\frac{1}{\omega_n(z)}\sum_{j=0}^{2n} f(a_{n,j})\,\frac{\omega_n(z)}{(z-a_{n,j})\,\omega_n'(a_{n,j})},
\]
which is \eqref{eq:Wn-barycentric}--\eqref{eq:Ln-def}. The interpolation property follows from the
Lagrange basis identity
\[
\frac{\omega_n(z)}{(z-a_{n,j})\,\omega_n'(a_{n,j})}\Big|_{z=a_{n,k}}=\delta_{jk}.
\]
\end{proof}

\begin{remark}
The polynomial $L_n$ will be controlled via the Hermite--Walsh representation
(Section~\ref{subsec:hermite-walsh}), which yields $L_n(n\zeta)=\exp(o(n))$ uniformly on compacts of
$\C\setminus[A,B]$.
\end{remark}

\begin{remark}[Triangular corner convention]
\label{rem:triangular-corner}
In the pole removal and compression steps one may place the discrete Cauchy--type contribution either
in the $(1,2)$-entry or in the $(2,1)$-entry of the triangular jump matrix, depending on the chosen
normalization of the residue condition \eqref{eq:Y-residue} (equivalently, on whether one works with $Y$
or with a conjugated/transpose-inverse variant).
All subsequent Deift--Zhou transformations and parametrices are insensitive to this choice, provided the
same convention is used consistently throughout.
For definiteness, we will use the \emph{upper-triangular} convention, i.e.\ triangular jumps of the form
$\begin{psmallmatrix}1&*\\0&1\end{psmallmatrix}$ on $\Gamma_n$.
\end{remark}

\section{Subexponential control of the interpolation polynomial}
\label{sec:Ln-control}

\subsection{Hermite--Walsh representation}
\label{subsec:hermite-walsh}

\begin{lemma}[Hermite--Walsh representation]
\label{lem:hermite-walsh}
Let $\mathcal D\subset\C$ be a simply connected domain with Jordan boundary, and let
$x_0,\dots,x_m\in\mathcal D$ be distinct. Define $\Omega_m(z):=\prod_{k=0}^m(z-x_k)$.
If $g$ is holomorphic in $\mathcal D$ and continuous on $\overline{\mathcal D}$, then the unique
polynomial $I_m g$ of degree $\le m$ interpolating $g$ at the nodes $x_k$ satisfies, for every
$z\in\mathcal D$,
\begin{equation}
\label{eq:HW-formula}
(I_m g)(z)=\frac{1}{2\pi i}\oint_{\partial\mathcal D}
\frac{g(\xi)}{\xi-z}\,\frac{\Omega_m(z)}{\Omega_m(\xi)}\,d\xi.
\end{equation}
\end{lemma}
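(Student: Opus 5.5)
Write $R(z)$ for the contour integral on the right-hand side of \eqref{eq:HW-formula}. The plan is to show directly that $R$ is a polynomial of degree at most $m$ that agrees with $g$ at every node; uniqueness of the interpolating polynomial then gives $R=I_m g$.

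\textbf{Step 1: $R$ is a polynomial of degree $\le m$.} Since $z\in\mathcal D$ and the nodes are interior, the kernel can be split as
\[
\frac{\Omega_m(z)}{(\xi-z)\Omega_m(\xi)}=\frac{1}{\xi-z}-\frac{\Omega_m(\xi)-\Omega_m(z)}{(\xi-z)\,\Omega_m(\xi)} .
\]
The quotient $(\Omega_m(\xi)-\Omega_m(z))/(\xi-z)$ is a polynomial in $(\xi,z)$ of degree $m$ in $z$. Hence
\[
R(z)=\frac{1}{2\pi i}\oint_{\partial\mathcal D}\frac{g(\xi)}{\xi-z}\,d\xi-\sum_{k=0}^{m} c_k z^k ,
\]
where each coefficient $c_k$ is an integral over $\partial\mathcal D$ of $g(\xi)$ times a rational function whose only poles are at the nodes, away from the contour. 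These $c_k$ are therefore finite constants, so the second term is a polynomial of degree $\le m$ in $z$. By the Cauchy integral formula, the first term equals $g(z)$. Consequently $R-g$ coincides on $\mathcal D$ with a polynomial of degree $\le m$.

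\textbf{Step 2: the remainder vanishes at the nodes.} From Step 1,
\[
g(z)-R(z)=\frac{\Omega_m(z)}{2\pi i}\oint_{\partial\mathcal D}\frac{g(\xi)}{(\xi-z)\,\Omega_m(\xi)}\,d\xi .
\]
For $z$ in $\mathcal D$, away from $\partial\mathcal D$, the integral is a holomorphic and finite function of $z$. In particular it is finite at $z=x_k$, and the prefactor $\Omega_m(x_k)=0$, so $g(x_k)-R(x_k)=0$ for every $k$. Thus $R$ is a polynomial of degree $\le m$ interpolating $g$ at all $m+1$ distinct nodes.

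\textbf{Step 3: identification.} Two polynomials of degree $\le m$ that agree at $m+1$ distinct points are equal, so $R=I_m g$. This proves \eqref{eq:HW-formula} for $z\in\mathcal D$.

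\textbf{Main technical point.} The only delicate issue is that $g$ is assumed merely continuous on $\overline{\mathcal D}$. Using the Cauchy formula on the Jordan boundary $\partial\mathcal D$ is then justified either by the standard Cauchy theorem for Jordan domains with continuous boundary values, or by applying it on interior Jordan curves that approximate $\partial\mathcal D$ and passing to the limit. If one prefers to avoid this altogether, one can take $\partial\mathcal D$ rectifiable, which holds for the contour $\Gamma$ used in the paper.
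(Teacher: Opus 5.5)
Your argument does not establish \eqref{eq:HW-formula}, and no argument can: for $z\in\mathcal D$ the identity is false. Two steps break.

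In Step~1 you correctly get $R(z)=g(z)-p(z)$ with
\[
p(z)=\frac{1}{2\pi i}\oint_{\partial\mathcal D}\frac{g(\xi)}{\Omega_m(\xi)}\,\frac{\Omega_m(\xi)-\Omega_m(z)}{\xi-z}\,d\xi ,
\]
a polynomial of degree $\le m$. This shows that $R-g$ is a polynomial, not that $R$ is. Yet the heading of Step~1 and the conclusion of Step~2 treat it as the latter.

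In Step~2 the displayed right-hand side is literally the definition of $R(z)$, so the display asserts $g-R=R$. What the vanishing prefactor $\Omega_m(x_k)=0$ really gives is $R(x_k)=0$.

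Putting the correct versions together, $p=g-R$ has degree $\le m$ and $p(x_k)=g(x_k)$, so $p=I_mg$ and
\[
\frac{1}{2\pi i}\oint_{\partial\mathcal D}\frac{g(\xi)}{\xi-z}\,\frac{\Omega_m(z)}{\Omega_m(\xi)}\,d\xi=g(z)-(I_mg)(z),\qquad z\in\mathcal D.
\]
This is Hermite's remainder formula, not a formula for $I_mg$. A minimal counterexample to the statement: take $\mathcal D$ the unit disk, $m=0$, $x_0=0$, $g(\xi)=\xi$. Then $I_0g\equiv 0$, while the right-hand side of \eqref{eq:HW-formula} equals $z$.

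The paper's proof has the same flaw. Its residue computation correctly produces $g(z)-(I_mg)(z)$, and the closing ``rearranging'' step misreads this as \eqref{eq:HW-formula}. Two statements are true and follow from your own Step~1:
\begin{itemize}
\item $I_mg=p$ as polynomials, hence on all of $\C$.
\item For $z\notin\overline{\mathcal D}$, where the pole at $\xi=z$ is not enclosed, $(I_mg)(z)=-\frac{1}{2\pi i}\oint_{\partial\mathcal D}\frac{g(\xi)}{\xi-z}\frac{\Omega_m(z)}{\Omega_m(\xi)}\,d\xi$.
\end{itemize}
The exterior version is the one actually invoked in Lemma~\ref{lem:Ln-subexp}, for $\zeta\in\C\setminus\overline{\mathcal D}$. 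The sign is harmless there, since only $|\widetilde L_n|$ is estimated.

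So the lemma should be restated in one of these corrected forms. Your remark about boundary continuity is fine, given $\partial\mathcal D$ rectifiable, which the contour integral requires anyway. It does not touch the real problem.
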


\begin{proof}
Consider
\[
F(\xi):=\frac{g(\xi)}{\xi-z}\,\frac{\Omega_m(z)}{\Omega_m(\xi)}.
\]
The function $F$ is meromorphic in $\mathcal D$ with simple poles at $\xi=z$ and at $\xi=x_k$.
By the residue theorem,
\[
\frac{1}{2\pi i}\oint_{\partial\mathcal D}F(\xi)\,d\xi
=
\Res_{\xi=z}F(\xi)+\sum_{k=0}^m\Res_{\xi=x_k}F(\xi).
\]
A direct computation gives $\Res_{\xi=z}F(\xi)=g(z)$ and
\[
\Res_{\xi=x_k}F(\xi)
=
-\frac{g(x_k)}{x_k-z}\,\frac{\Omega_m(z)}{\Omega_m'(x_k)}.
\]
Hence
\[
\frac{1}{2\pi i}\oint_{\partial\mathcal D}F(\xi)\,d\xi
=
g(z)-\sum_{k=0}^m g(x_k)\,\frac{\Omega_m(z)}{(z-x_k)\Omega_m'(x_k)}.
\]
The sum on the right is precisely the Lagrange interpolation polynomial $(I_m g)(z)$, so rearranging
yields \eqref{eq:HW-formula}.
\end{proof}

\subsection{Subexponential bound for $L_n(n\zeta)$ off $[A,B]$}
\label{subsec:Ln-subexp}

\paragraph{A fixed contour.}
Choose a simply connected domain $\mathcal D\Subset\C\setminus(-\infty,0]$ containing $[A,B]$.
Then $a\mapsto \zeta(s,a)$ is holomorphic on $n\mathcal D$ for every $n$, and the contour $\partial\mathcal D$
can be used uniformly after scaling $z=n\zeta$.

Recall the interpolation polynomial $L_n$ from \eqref{eq:Ln-def}. In scaled variables, define
\[
\widetilde L_n(\zeta):=L_n(n\zeta),\qquad \widetilde f_n(\zeta):=f(n\zeta)=\zeta(s,n\zeta),
\qquad \Omega_n(\zeta)=\prod_{j=0}^{2n}(\zeta-\alpha_{n,j}).
\]
Then $\widetilde L_n$ is the degree-$\le 2n$ interpolant of $\widetilde f_n$ at the nodes $\alpha_{n,j}$.

\begin{lemma}[Subexponential control of $L_n$]
\label{lem:Ln-subexp}
Let $K\Subset\C\setminus[A,B]$ be compact. Then
\begin{equation}
\label{eq:Ln-subexp}
\widetilde L_n(\zeta)=\exp(o(n)),
\qquad n\to\infty,
\end{equation}
uniformly for $\zeta\in K$.
Equivalently, for every $\varepsilon>0$ there exists $n_0$ such that
\[
\sup_{\zeta\in K}|\widetilde L_n(\zeta)|\le e^{\varepsilon n},\qquad n\ge n_0.
\]
\end{lemma}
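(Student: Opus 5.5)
The plan is to write $\widetilde L_n$ as a contour integral via Lemma~\ref{lem:hermite-walsh}, applied to $g=\widetilde f_n$ with nodes $\alpha_{n,j}$ and $m=2n$. Two bounds are then needed. The first is a subexponential bound for $\widetilde f_n$ on the contour. The second is a bound of size $e^{n\delta}$, with $\delta>0$ arbitrary, for the ratio $\Omega_n(\zeta)/\Omega_n(\xi)$. The ratio is controlled by Lemma~\ref{lem:Omega-logpot}. Writing $U(\zeta):=\int_A^B\log|\zeta-t|\,\kappa(t)\,dt$, that lemma gives
\[
\Bigl|\frac{\Omega_n(\zeta)}{\Omega_n(\xi)}\Bigr|=\exp\bigl(n(U(\zeta)-U(\xi))+O(1)\bigr)
\]
uniformly for $\zeta,\xi$ in compacts off $[A,B]$. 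So everything hinges on choosing contours adapted to the level sets of $U$, rather than the single fixed contour $\Gamma$.

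\emph{Step 1: choice of contours.} Let $c_0:=\max_{[A,B]}U$. For $c>c_0$, the sublevel set $\mathcal D_c:=\{U<c\}$ is a simply connected neighbourhood of $[A,B]$ with analytic Jordan boundary, since $U$ is subharmonic and harmonic off $[A,B]$, with $U(\zeta)\sim2\log|\zeta|$ at $\infty$. For $c$ close to $c_0$ it lies in $\C\setminus(-\infty,0]$.

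\emph{Step 2: points inside $\mathcal D_c$.} If $\zeta\in K\cap\mathcal D_c$, I use \eqref{eq:HW-formula} on $\partial\mathcal D_c$. On the contour $U(\xi)=c>U(\zeta)$, so the ratio above is $O(1)$ and $|\widetilde L_n(\zeta)|\lesssim \sup_{\partial\mathcal D_c}|\widetilde f_n|$.

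\emph{Step 3: points outside $\mathcal D_c$.} If $U(\zeta)>c_0$, I take $c=U(\zeta)-\delta$, so that $\zeta\notin\overline{\mathcal D_c}$. The residue at $\xi=z$ is then absent, and the computation in the proof of Lemma~\ref{lem:hermite-walsh} gives instead
\[
\widetilde L_n(\zeta)=-\frac{1}{2\pi i}\oint_{\partial\mathcal D_c}\frac{\widetilde f_n(\xi)}{\xi-\zeta}\,\frac{\Omega_n(\zeta)}{\Omega_n(\xi)}\,d\xi .
\]
Here the ratio is at most $e^{n\delta+O(1)}$. A compactness argument over $K$ then makes the choice of finitely many levels uniform: $\dist(\zeta,\partial\mathcal D_c)$ stays bounded below, and the $O(1)$ in Lemma~\ref{lem:Omega-logpot} is uniform on the finitely many contours. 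Since $\delta$ is arbitrary, this gives \eqref{eq:Ln-subexp}.

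\emph{Step 4: the bound on $\widetilde f_n$.} It remains to show $|\zeta(s,n\xi)|=e^{O(\log n)}$ uniformly on each contour. Lemma~\ref{lem:hurwitz-poly-bound} only covers real $\xi\in[A,B]$, so I extend it. For $|\arg\xi|\le\pi-\varepsilon$ and $|\xi|\ge r>0$, I use the Hermite (or Euler--Maclaurin) representation
\[
\zeta(s,a)=\frac{a^{1-s}}{s-1}+\frac{a^{-s}}{2}+O\bigl(|a|^{-\Re s-1}\bigr),
\]
which is uniform in such sectors and gives $|\zeta(s,n\xi)|\le Cn^{|1-s|}e^{\pi|\Im s|}$.

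\emph{Main obstacle.} The hard step is ensuring that the contours $\partial\mathcal D_c$ needed in Step~3 stay inside a sector $|\arg\xi|\le\pi-\varepsilon$ avoiding the cut. For large $c$ the level curves of $U$ wrap around the origin and cross $(-\infty,0]$, where $\zeta(s,n\xi)$ has poles at $n\xi\in-\N_0$. My first attempt would be to deform the contour near the negative axis, keeping it on the side where $U(\xi)\ge c$ so the ratio bound survives. If that fails, I would restrict to those $K$ for which $\{U<\max_K U\}$ omits a neighbourhood of $(-\infty,0]$, which covers all compacts near $[A,B]$ used later. For points near the negative axis one would then have to argue separately, for example by using the polynomial nature of $\widetilde L_n$ together with the maximum principle on a larger admissible domain.
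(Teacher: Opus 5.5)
\textbf{The gap cannot be closed: the lemma is false as stated.} The obstacle you name in your last paragraph cannot be removed by contour deformation or by a separate maximum-principle argument.

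Take $s=2$. Then $\widetilde f_n(\xi)=\sum_{m\ge0}(m+n\xi)^{-2}$ is real and smooth on $[A,B]$. The coefficient of $\zeta^{2n}$ in $\widetilde L_n$ is the divided difference of $\widetilde f_n$ at the $2n+1$ nodes, which equals $\widetilde f_n^{(2n)}(\eta)/(2n)!$ for some $\eta\in[A,B]$:
\[
[\alpha_{n,0},\dots,\alpha_{n,2n}]\widetilde f_n=(2n+1)\,n^{2n}\sum_{m\ge0}(m+n\eta)^{-2n-2}\;\ge\;\frac{2n+1}{n^{2}}\,B^{-2n-2}.
\]
Cauchy's estimate on $K=\{|\zeta|=2B\}$, a compact subset of $\C\setminus[A,B]$, then gives $\sup_K|\widetilde L_n|\ge(2n+1)n^{-2}B^{-2}\,4^{n}$, which is exponentially large.

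This $K$ is nowhere near the negative axis. The bad set is not ``points near $(-\infty,0]$'' but all $\zeta$ with $U(\zeta)>U(0)$, and your own level-set picture shows why. $U$ restricted to $(-\infty,0]$ is minimized at $0$. Any Jordan curve in $\C\setminus(-\infty,0]$ surrounding $[A,B]$ must cross $(0,A)$, where $U<U(0)$. So the level-set method cannot get past the level $U(0)$.

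The limit is sharp, not an artifact of the method. For $s=2$, a residue computation at the poles $\xi=-m/n$ shows the leading coefficient is of size $e^{-nU(0)+O(\log n)}$. Hence $\sup_{|\zeta|=R}|\widetilde L_n|$ grows exponentially as soon as $2\log R>U(0)$.

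The paper's own proof breaks at exactly this point. It uses one fixed $\Gamma$ with $K$ outside it, and reads the bound $\frac1n\log|\Omega_n(\zeta)/\Omega_n(\xi)|\le C$ as if it gave $e^{o(n)}$. It only gives $e^{Cn}$. Your observation that the ratio is $e^{n(U(\zeta)-U(\xi))+O(1)}$ is precisely what exposes this.

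\textbf{Two smaller corrections.}
\begin{itemize}
\item In Step~1, the sets $\{U<c\}$ with $c$ slightly above $c_0$ avoid the cut if and only if $c_0=\max_{[A,B]}U<U(0)$. This is not automatic: it fails, for example, if $B>2A$ and $\kappa$ carries most of its mass near $A$.
\item In Step~2 you use \eqref{eq:HW-formula}, which is misstated in the paper. For $\zeta$ inside the contour, the integral equals $\widetilde f_n(\zeta)-\widetilde L_n(\zeta)$ (Hermite's remainder formula). This adds a harmless term $|\widetilde f_n(\zeta)|$ to your bound.
\end{itemize}

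\textbf{What your argument does prove.} With these corrections, your method proves the statement that is actually true, and your suggested restriction on $K$ is the right one. Suppose $c_0<U(0)$ and $K\subset\{U<U(0)\}\setminus[A,B]$ is compact. Take a single level curve $\{U=c\}$ with $\max\{c_0,\max_K U\}<c<U(0)$. It gives $\sup_K|\widetilde L_n|=O(n^{1-\Re s})$, so Step~3 is not needed at all. Beyond the level $U(0)$ the bound \eqref{eq:Ln-subexp} fails in general, as the example shows. The lemma has to be restricted accordingly.
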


\begin{proof}
Fix a simply connected $\mathcal D$ as above and choose a Jordan curve $\Gamma=\partial\mathcal D$
so that $[A,B]\subset\mathcal D$ and $K\Subset\C\setminus\overline{\mathcal D}$.
Apply Lemma~\ref{lem:hermite-walsh} to the function $\widetilde f_n$ on $\mathcal D$ with nodes
$\{\alpha_{n,j}\}_{j=0}^{2n}$:
\[
\widetilde L_n(\zeta)=\frac{1}{2\pi i}\oint_{\Gamma}
\frac{\widetilde f_n(\xi)}{\xi-\zeta}\,\frac{\Omega_n(\zeta)}{\Omega_n(\xi)}\,d\xi,
\qquad \zeta\in\C\setminus\overline{\mathcal D}.
\]
For $\zeta\in K$ and $\xi\in\Gamma$ we have $|\xi-\zeta|\ge c_K>0$. By Lemma~\ref{lem:hurwitz-poly-bound},
$|\widetilde f_n(\xi)|=|\zeta(s,n\xi)|\le C\,n^{1-\Re s}$ uniformly for $\xi\in\Gamma$.
Moreover, by Lemma~\ref{lem:Omega-logpot} (applied to $\log|\Omega_n|$ on $K$ and on $\Gamma$),
\[
\frac{1}{n}\log\left|\frac{\Omega_n(\zeta)}{\Omega_n(\xi)}\right|
=
\int_A^B\log\left|\frac{\zeta-t}{\xi-t}\right|\kappa(t)\,dt
\;+\;O\!\left(\frac{1}{n}\right),
\]
uniformly for $\zeta\in K$ and $\xi\in\Gamma$. Since $\zeta\mapsto\int_A^B\log|\zeta-t|\kappa(t)\,dt$ is harmonic
in $\C\setminus[A,B]$ and $\Gamma$ surrounds $[A,B]$, the maximum principle implies that the right-hand side is
bounded above by a constant depending only on $K$ and $\Gamma$ (in particular, it is $o(n)$ as $n\to\infty$).
Combining these bounds and the finite length of $\Gamma$ yields \eqref{eq:Ln-subexp}.
\end{proof}

\begin{remark}
Lemma~\ref{lem:Ln-subexp} is the only input needed from interpolation theory in the steepest descent scheme:
it shows that $L_n(n\zeta)$ contributes at most subexponentially in $n$ on compact subsets of
$\C\setminus[A,B]$.
\end{remark}

\section{Constrained equilibrium problem and KKT conditions}
\label{sec:equilibrium}

We recall the constrained logarithmic energy problem associated with the external field $V$ defined by
(cf.\ \cite{SaffTotik}).

\begin{remark}[From quantile nodes to the equilibrium problem]
\label{rem:quantiles-to-equilibrium}
Since $\alpha_{n,j}$ are quantiles of the density $\kappa$ on $[A,B]$, we have
$\alpha_{n,j}=\alpha(j/(2n+1)) + O(1/n)$, where $\alpha$ is the inverse distribution function of $\kappa$.
Consequently, for any compact $K\Subset\C\setminus[A,B]$ the discrete logarithmic sums generated by the nodes,
such as $\sum_{j}\log(\zeta-\alpha_{n,j})$ and their weighted variants, converge uniformly on $K$ to the corresponding
logarithmic potentials of the limiting measure with density $\kappa$.
This is the macroscopic mechanism behind the appearance of the constrained equilibrium problem in the $g$-function step.
\end{remark}

\begin{remark}[Quantile nodes and the macroscopic limit]
\label{rem:quantile-macro-limit}
Since $\alpha_{n,j}$ are quantiles of $\kappa$, they approximate the continuous distribution at scale $1/n$.
In particular, on any compact $K\Subset\C\setminus[A,B]$ the discrete logarithmic sums generated by the nodes
converge uniformly to the corresponding logarithmic potentials of the limiting density $\kappa$.
This is why the constrained equilibrium problem captures the macroscopic limit underlying the $g$-function construction.
\end{remark}

\[
V(x)=-2\int_A^B \log|x-t|\,\kappa(t)\,dt.
\]
Let $\mathcal M([A,B])$ denote the set of Borel probability measures supported on $[A,B]$.

\subsection{Existence and uniqueness of the minimizer}
\label{subsec:equilibrium-ex-uniq}

\begin{definition}[Admissible class and constrained energy]
\label{def:admissible-class}
Let
\[
\mathcal A:=\Bigl\{\mu\in\mathcal M([A,B])\ \Big|\ \mu\ll dx,\ 0\le \mu'(x)\le \kappa(x)\ \text{a.e. on }[A,B]\Bigr\}.
\]
For $\mu\in\mathcal A$ define the weighted logarithmic energy
\begin{equation}
\label{eq:energy-functional}
\mathcal I_V[\mu]
:=
\iint_{[A,B]^2}\log\frac{1}{|x-y|}\,d\mu(x)\,d\mu(y)
+\int_A^B V(x)\,d\mu(x).
\end{equation}
\end{definition}

\begin{proposition}[Existence and uniqueness]
\label{prop:equilibrium-ex-uniq}
There exists a unique minimizer $\mu_*\in\mathcal A$ of $\mathcal I_V$.
\end{proposition}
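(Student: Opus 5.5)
The plan is the direct method of potential theory: show that $\mathcal A$ is weak-$*$ compact and convex, that $\mathcal I_V$ is weak-$*$ lower semicontinuous and bounded below on $\mathcal A$, and that it is strictly convex on $\mathcal A$. Existence then follows from compactness and lower semicontinuity, and uniqueness from strict convexity. I take the setup of Saff--Totik (and its constrained variant, in the style of Dragnev--Saff) as the model.

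\emph{Step 1 (admissible class).} The set $\mathcal A$ is nonempty because $\int_A^B\kappa\,dx=2>1$; for instance $\tfrac12\kappa\,dx\in\mathcal A$. It is clearly convex. It is also weak-$*$ closed in $\mathcal M([A,B])$. To see this, note that the constraint $0\le\mu'\le\kappa$ is equivalent to
\[
0\le\int\phi\,d\mu\le\int\phi\,\kappa\,dx\quad\text{for all continuous }\phi\ge0,
\]
and this family of inequalities passes to weak-$*$ limits. The upper bound forces absolute continuity, with density at most $\kappa$, of any limit. Since $\mathcal M([A,B])$ is weak-$*$ compact by Prokhorov/Banach--Alaoglu, so is $\mathcal A$.

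\emph{Step 2 (finiteness and semicontinuity).} The external field $V$ is continuous on $[A,B]$, since it is a log potential of a bounded density, so $\int V\,d\mu$ is weak-$*$ continuous. For the energy term, every $\mu\in\mathcal A$ has bounded density, so $I[\mu]=\iint\log|x-y|^{-1}\,d\mu\,d\mu\le\|\kappa\|_\infty^2\iint|\log|x-y||\,dx\,dy<\infty$. The kernel $\log|x-y|^{-1}$ is bounded below on $[A,B]^2$. Truncating it by $\min(\log|x-y|^{-1},M)$ and using monotone convergence shows that $I$ is weak-$*$ lower semicontinuous. Hence $\mathcal I_V$ is finite, bounded below, and lower semicontinuous on $\mathcal A$, so a minimizer $\mu_*$ exists.

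\emph{Step 3 (uniqueness).} Suppose $\mu_1,\mu_2\in\mathcal A$ are both minimizers. Then $\nu=\mu_1-\mu_2$ is a signed measure with total mass zero and finite energy. The key fact is the classical positivity $I[\nu]\ge0$ for such $\nu$, with equality only if $\nu=0$. One route is the Fourier representation $I[\nu]=\int_0^\infty|\hat\nu(t)|^2\,dt/t$, valid for mass-zero measures with compactly supported bounded density. The linear term $\int V\,d\mu$ cancels in the convexity computation, which gives
\[
\mathcal I_V\!\left[\tfrac{\mu_1+\mu_2}{2}\right]=\tfrac12\mathcal I_V[\mu_1]+\tfrac12\mathcal I_V[\mu_2]-\tfrac14 I[\nu].
\]
Since $(\mu_1+\mu_2)/2\in\mathcal A$ by convexity, minimality forces $I[\nu]\le0$, hence $\nu=0$.

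The main obstacle is the strict positive-definiteness of the logarithmic kernel on mass-zero signed measures in Step 3. Here it is mild, because all densities are bounded by $\kappa$, so every energy is finite and the Fourier or Riesz-type argument applies without the delicate finite-energy approximation needed in general.
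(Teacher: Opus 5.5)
Your proposal is correct and follows essentially the same route as the paper. Existence comes from weak-$*$ compactness of $\mathcal A$, weak-$*$ lower semicontinuity of the logarithmic energy, and continuity of $\mu\mapsto\int V\,d\mu$; uniqueness comes from strict convexity of the energy on the convex set $\mathcal A$, and you additionally spell out details the paper leaves implicit (nonemptiness via $\tfrac12\kappa\,dx$, weak-$*$ closedness of the density constraint, and positive-definiteness of the logarithmic kernel on mass-zero signed measures behind the midpoint identity).
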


\begin{proof}
The admissible class $\mathcal A$ is weak-$\ast$ compact: it is closed and tight in $\mathcal M([A,B])$,
and the density constraint $0\le\mu'\le\kappa$ implies uniform absolute continuity.
The logarithmic kernel is lower semicontinuous and bounded below on $[A,B]^2$, hence
$\mu\mapsto \iint \log\frac{1}{|x-y|}\,d\mu\,d\mu$ is lower semicontinuous on $\mathcal A$.
Since $V$ is continuous on $[A,B]$, the linear term $\mu\mapsto\int V\,d\mu$ is continuous.
Thus $\mathcal I_V$ attains its minimum on $\mathcal A$.

Uniqueness follows from the strict convexity of the logarithmic energy on probability measures
together with convexity of $\mathcal A$.
\end{proof}

\subsection{Euler--Lagrange (KKT) relations}
\label{subsec:kkt}

Define the logarithmic potential of $\mu_*$ by
\[
U^{\mu_*}(x):=\int_A^B \log\frac{1}{|x-t|}\,d\mu_*(t).
\]

\begin{proposition}[KKT conditions]
\label{prop:KKT}
There exists a constant $\ell\in\R$ such that the following hold a.e. on $[A,B]$:
\begin{align}
\label{eq:KKT-band}
2U^{\mu_*}(x)+V(x) &= \ell \qquad \text{on the band set } \{0<\mu_*'(x)<\kappa(x)\},\\
\label{eq:KKT-void}
2U^{\mu_*}(x)+V(x) &\ge \ell \qquad \text{on the void set } \{\mu_*'(x)=0\},\\
\label{eq:KKT-sat}
2U^{\mu_*}(x)+V(x) &\le \ell \qquad \text{on the saturated set } \{\mu_*'(x)=\kappa(x)\}.
\end{align}
\end{proposition}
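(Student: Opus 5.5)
We use the standard first-variation argument for the minimizer $\mu_*$ of the convex functional $\mathcal I_V$ over the convex set $\mathcal A$ (Proposition~\ref{prop:equilibrium-ex-uniq}). Set $\Phi(x):=2U^{\mu_*}(x)+V(x)$. Because $\mu_*$ has bounded density $\mu_*'\le\kappa\le\kappa_{\max}$, its potential $U^{\mu_*}$ is continuous on $[A,B]$, and $V$ is continuous. Hence $\Phi$ is a continuous real function on $[A,B]$. For any $\nu\in\mathcal A$ and $t\in[0,1]$, the measure $\mu_t:=\mu_*+t(\nu-\mu_*)$ lies in $\mathcal A$. Expanding the quadratic energy gives
\[
\mathcal I_V[\mu_t]-\mathcal I_V[\mu_*]=t\int_A^B \Phi\,d(\nu-\mu_*)+t^2\,I[\nu-\mu_*],
\]
where $I[\cdot]$ is the logarithmic energy, which is finite for these bounded-density signed measures. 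Dividing by $t$ and letting $t\downarrow0$, minimality yields the variational inequality
\[
\int_A^B \Phi\,d\nu\ \ge\ \int_A^B \Phi\,d\mu_*\qquad\text{for all }\nu\in\mathcal A.
\]

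Define $\ell$ as the threshold of a bathtub-type problem. The linear problem of minimizing $\int\Phi\,d\nu$ over $\nu\in\mathcal A$ is solved by filling the density $\kappa$ where $\Phi$ is smallest. Since $\int_A^B\kappa=2>1$, there is a level $\ell\in\R$ such that
\[
\int_{\{\Phi<\ell\}}\kappa\,dx\ \le\ 1\ \le\ \int_{\{\Phi\le\ell\}}\kappa\,dx.
\]
We prove the three relations directly by local mass transfers rather than appealing to the bathtub principle abstractly. Suppose there are sets $E,F$ of positive measure and $\delta>0$ such that $\Phi\le\ell_1$ on $E$, $\Phi\ge\ell_1+\delta$ on $F$, $\mu_*'<\kappa-\eta$ on $E$, and $\mu_*'>\eta$ on $F$. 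Moving a small amount of mass from $F$ to $E$, with densities proportional to Lebesgue measure, keeps the perturbed measure in $\mathcal A$. It also strictly decreases $\int\Phi\,d\nu$, which contradicts the variational inequality.

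This exclusion shows that, up to null sets, $\operatorname*{ess\,sup}\{\Phi(x): \mu_*'(x)>0\}\le \operatorname*{ess\,inf}\{\Phi(x): \mu_*'(x)<\kappa(x)\}$. Take $\ell$ to be any number between these two values; if either set is null, the corresponding bound is vacuous. On the band set, both $\mu_*'>0$ and $\mu_*'<\kappa$ hold, so $\Phi\le\ell$ and $\Phi\ge\ell$, giving \eqref{eq:KKT-band}. On the void set, $\mu_*'=0<\kappa$, giving \eqref{eq:KKT-void}. On the saturated set, $\mu_*'=\kappa>0$, giving \eqref{eq:KKT-sat}. The conclusions hold a.e. because the sets involved are defined through the density.

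The main obstacle is the measure-theoretic bookkeeping in the exclusion step. The sets $\{0<\mu_*'<\kappa\}$ are only defined a.e., so we must pass to the subsets where $\eta<\mu_*'<\kappa-\eta$ and let $\eta\downarrow0$, using Lebesgue density points to obtain sets $E,F$ of positive measure. We must also make sure that the transferred mass $\varepsilon\,\mathbf 1_E/|E|-\varepsilon\,\mathbf 1_F/|F|$ respects $0\le\mu'\le\kappa$ for small $\varepsilon$. Continuity of $\Phi$ is a convenience here but is not strictly needed.
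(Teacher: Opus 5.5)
The paper states Proposition~\ref{prop:KKT} without proof; it only points to Saff--Totik at the head of Section~\ref{sec:equilibrium}. So your argument does not parallel an existing one but supplies the missing proof, and it is correct.

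The first variation is legitimate. $\mathcal A$ is convex, and $\nu-\mu_*$ has bounded density, so $I[\nu-\mu_*]<\infty$. The mass-transfer step then correctly gives
$M:=\operatorname{ess\,sup}_{\{\mu_*'>0\}}\Phi\le m:=\operatorname{ess\,inf}_{\{\mu_*'<\kappa\}}\Phi$.
All three relations follow for any $\ell\in[M,m]$.

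Compared with the standard Dragnev--Saff treatment, which uses a general constraint measure, inequalities quasi-everywhere on supports, and capacity arguments, your route uses the bounded density to stay within Lebesgue measure. That matches the a.e.\ formulation of the statement, and it uses only minimality of $\mu_*$, not uniqueness.

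A few tidy-ups:
\begin{enumerate}
\item The bathtub threshold in your second paragraph is never used and is then overwritten by the later choice of $\ell$. Drop it.
\item Lebesgue density points are unnecessary. If $M>m$, take $m<\ell_1<\ell_1+\delta<M$ and set $E=\{\Phi\le\ell_1,\ \mu_*'<\kappa-\eta\}$ and $F=\{\Phi\ge\ell_1+\delta,\ \mu_*'>\eta\}$. By continuity of measure both have positive measure for small $\eta$.
\item The ``vacuous'' case never occurs. $\{\mu_*'>0\}$ is non-null because $\mu_*$ has mass $1$, and $\{\mu_*'<\kappa\}$ is non-null because $\int_A^B\kappa=2>1$. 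Continuity of $\Phi$ on $[A,B]$ makes $M$ and $m$ finite, so indeed $\ell\in\R$.
\item Continuity of $\Phi$ also lets you upgrade the a.e.\ conclusions: $\Phi\le\ell$ everywhere on $\supp\mu_*$, and $\Phi\ge\ell$ everywhere on $\supp(\kappa\,dx-\mu_*)$. Every neighborhood of such a point meets the relevant density set in positive measure. This pointwise form is what Lemma~\ref{lem:phase-real} and condition (R4) implicitly rely on.
\end{enumerate}
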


\begin{remark}
We will use the shorthand notation $\Delta$ for the band (the support of $\mu_*$ in the regular one-cut
regime) and write $\ell=\ellKKT$ to match the phase definition in Section~\ref{sec:intro}.
\end{remark}

\subsection{Regular one-cut regime}
\label{subsec:regular-regime}

\begin{definition}[Regular one-cut soft-edge regime]
\label{def:regular-onecut}
We say that the equilibrium measure $\mu_*$ is in the \emph{regular one-cut soft-edge regime} if:
\begin{itemize}[leftmargin=2.2em]
\item[(R1)] $\supp(\mu_*)=\Delta=[c,d]\Subset(A,B)$ is a single interval;
\item[(R2)] $0<\mu_*'(x)<\kappa(x)$ for a.e.\ $x\in(c,d)$ (no saturation on the band);
\item[(R3)] $\mu_*'$ has soft-edge square-root behavior at $c$ and $d$;
\item[(R4)] the KKT inequalities are strict away from $\Delta$ (uniform margin on compact subsets of
$[A,B]\setminus\Delta$).
\end{itemize}
\end{definition}

\subsection{How to verify (R1)--(R4) for a given $\kappa$}
\label{subsec:verify-R1-R4}

The regular one-cut conditions in Definition~\ref{def:regular-onecut} can be verified a posteriori for a
prescribed density $\kappa$ by analyzing the corresponding constrained equilibrium problem.
Concretely, one may proceed as follows.

\smallskip
\noindent\textbf{Step 1 (Compute $\mu_*$).}
Solve for the equilibrium measure $\mu_*$ (or its density $\mu_*'$ on its support) using the Euler--Lagrange
(KKT) relations from Section~\ref{subsec:kkt}. In practice this can be done via the Cauchy transform
(or Hilbert transform) formulation for the unknown density on the band.

In numerical practice, one typically solves the KKT system on a trial band $[c,d]$ (e.g., as a singular integral equation
for $\mu_*'$), while $c$ and $d$ are detected by enforcing the endpoint conditions (vanishing of $\mu_*'$ at $c,d$ with
square-root behavior) together with the complementary slackness/inequality checks off the band.

\smallskip
\noindent\textbf{Step 2 (Verify one-cut support).}
Determine the set where the equality part of the KKT relations holds, i.e.,
$\{x\in[A,B]:\ 2U^{\mu_*}(x)+V(x)=\ell\}$.
Condition (R1) amounts to this set being a single interval $\Delta=[c,d]$.

\smallskip
\noindent\textbf{Step 3 (Exclude saturation on the band).}
Once $\Delta$ is identified, check that the density satisfies $0<\mu_*'(x)<\kappa(x)$ for a.e.\ $x\in(c,d)$.
This confirms (R2) and rules out saturated regions on the band.

\smallskip
\noindent\textbf{Step 4 (Soft-edge behavior).}
Verify that $\mu_*'$ vanishes at $c$ and $d$ with square-root behavior~\cite{DeiftBook}. Equivalently, check that the
associated phase function has the standard simple turning-point structure at the endpoints (cf.\ \cite{DeiftBook}), 
which is the input for the Airy parametrices. This is (R3).

\smallskip
\noindent\textbf{Step 5 (Strict margins off the band).}
Finally, check that the KKT inequalities are strict away from $\Delta$, uniformly on compact subsets of
$[A,B]\setminus\Delta$. This provides the uniform sign margin for $\Re\ph$ required in the lens opening and
yields (R4).

\begin{theorem}[Deift--Zhou scheme in the regular regime]
\label{thm:DZ-regular-regime}
Assume $\mu_*$ satisfies Definition~\ref{def:regular-onecut}. Then:
\begin{itemize}[leftmargin=2.2em]
\item[(i)] the phase $\ph$ defined in \eqref{eq:phase-def} satisfies the sign structure needed for lens opening,
in particular $\Re\ph\ge c_0>0$ on the lens lips away from the endpoint disks;
\item[(ii)] the outer model problem admits the explicit solution $N$ on $\C\setminus[c,d]$;
\item[(iii)] the local Airy parametrices $P^{(c)}$ and $P^{(d)}$ exist and satisfy the matching
$P^{(c)}N^{-1}=I+O(1/n)$ on $\partial U_c$ and $P^{(d)}N^{-1}=I+O(1/n)$ on $\partial U_d$;
\item[(iv)] the ratio problem has a unique solution with $R=I+O(1/n)$, yielding the stated strong asymptotics.
\end{itemize}
\end{theorem}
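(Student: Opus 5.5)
The plan is to run the standard Deift--Zhou argument for an orthogonal-polynomial-type RHP with a varying weight. The regular-regime hypotheses (R1)--(R4) of Definition~\ref{def:regular-onecut} are exactly the inputs that each of the four items needs.

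\textbf{Item (i): the phase and its sign structure.} Build $g(\zeta)=\int\log(\zeta-t)\,d\mu_*(t)$ with the principal branch. The phase is $\ph=g_+-g_--$ (the appropriate $\ell$-shift). Equivalently, $\ph$ is a primitive of $\mp 2\pi i\,\mu_*'$ continued analytically off $(c,d)$. By (R2)--(R3) and the real-analyticity of $\kappa$, hence of $V$, the density $\mu_*'$ extends to $h(\zeta)\sqrt{(\zeta-c)(\zeta-d)}$ with $h$ analytic and nonvanishing near $[c,d]$. The Cauchy--Riemann equations then give $\Re\ph>0$ in thin lenses above and below $(c,d)$: on the band $\Re\ph_\pm=0$ and $\partial_y\Re\ph=2\pi\mu_*'>0$. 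On $[A,B]\setminus[c,d]$ the KKT inequality (R4) gives a uniform margin. Shrinking the lens to avoid $U_c\cup U_d$ and using compactness yields $c_0>0$.

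\textbf{Item (ii): the outer parametrix.} After $g$-transformation and lens opening, the model jump on $(c,d)$ becomes $\begin{psmallmatrix}0&\Van\\-1/\Van&0\end{psmallmatrix}$, where $\Van$ is the analytic field from the factorization of $W_n$. I would solve it explicitly by a Szeg\H{o} function $D$ for $\Van$ on $[c,d]$, namely $D=\exp\bigl(\tfrac{\sqrt{(\zeta-c)(\zeta-d)}}{2\pi i}\int_c^d\frac{\log\Van(x)}{\sqrt{(x-c)(x-d)}_+}\frac{dx}{x-\zeta}\bigr)$. Then set $N=D_\infty^{\sigma_3}N_0D^{-\sigma_3}$, where $N_0$ is the standard $a(\zeta)=((\zeta-d)/(\zeta-c))^{1/4}$ solution. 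Nonvanishing of $\Van$ near $[c,d]$ is needed so that $\log\Van$ is analytic there. This is where Lemma~\ref{lem:Ln-subexp} and the error factor $\mathcal E_n=\exp(o(n))$ enter: they must be absorbed into $\Van$ or into a near-identity factor.

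\textbf{Item (iii): the Airy parametrices.} At $d$ define $f_d(\zeta)=(\tfrac34\ph(\zeta))^{2/3}$. By (R3), $\ph\sim C(\zeta-d)^{3/2}$ with $C\ne0$, so $f_d$ is conformal on $U_d$, and I deform the lens lips so they map to the Airy rays. Set $P^{(d)}=E_n\,\Psi_{\Ai}(n^{2/3}f_d)\,e^{-\frac n2\ph\sigma_3}\Van^{-\sigma_3/2}$. The analytic prefactor $E_n=N\,\Van^{\sigma_3/2}\frac{1}{\sqrt2}\begin{psmallmatrix}1&-i\\-i&1\end{psmallmatrix}(n^{2/3}f_d)^{\sigma_3/4}$ is chosen so that $P^{(d)}N^{-1}=I+O(n^{-1})$ on $\partial U_d$, via the Airy asymptotic expansion. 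The point $c$ is handled symmetrically, with $f_c=(\tfrac34\widetilde\ph)^{2/3}$ and the mirrored rays.

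\textbf{Item (iv): the small-norm problem.} Set $R=S N^{-1}$ outside the disks and $R=S(P^{(c,d)})^{-1}$ inside them. The jumps of $R$ are then $I+O(e^{-c_0n})$ on the lens lips, $I+O(1/n)$ on $\partial U_c\cup\partial U_d$, and exponentially small on the remaining parts of $\Gamma$ and $[A,B]\setminus[c,d]$. The last of these uses (R4) together with Lemma~\ref{lem:Omega-logpot}, which controls the discrete-to-continuous replacement. The standard Neumann-series argument for the singular integral operator $C_{J_R-I}$ then gives existence, uniqueness (Lemma~\ref{lem:Y-unique}) and $R=I+O(1/n)$ uniformly. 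Reverting the transformations gives solvability of $Y$ for large $n$, consistent with $\mathrm{(ND)}_n$, and the asymptotics \eqref{eq:main-Y}.

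\textbf{Main obstacle.} I expect the hardest step to be the jump estimates, not the parametrices. The factorization $W_n=L_n/\omega_n$ yields only $\exp(o(n))$ control from Lemma~\ref{lem:Ln-subexp}, and Lemma~\ref{lem:omega-derivative-log} has an $O(\log n/n)$ error, whereas $O(1/n)$ matching requires $\mathcal E_n=1+O(1/n)$ near $[c,d]$. I would first try to prove that $\mathcal E_n$ is analytic and $1+O(1/n)$ on the compressed contour, using the Euler--Maclaurin expansion of $\log\Omega_n$ with real-analytic $\kappa$. Failing that, the result degrades to $R=I+o(1)$.
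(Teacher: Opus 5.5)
There is a genuine gap, and it lies upstream of the obstacle you flag.

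\textbf{No off-diagonal jump on the band.} In (ii) you take for granted that, after the $g$-transformation and lens opening, the jump on $(c,d)$ is $\begin{psmallmatrix}0&w\\-w^{-1}&0\end{psmallmatrix}$. The compressed problem produces no such jump. $\widetilde T$ is analytic across $(c,d)$, since its only jump is on $\Gamma$. So $S$ in \eqref{eq:T-to-Sg} has on $(c,d)$ only the diagonal jump $e^{-n(g_+-g_-)\sigma_3}$. A diagonal matrix has no factorization $L_-\begin{psmallmatrix}0&w\\-w^{-1}&0\end{psmallmatrix}L_+$ with $L_\pm$ lower unipotent, because the $(1,2)$ entry of such a product is $w\neq0$. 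The off-diagonal entry $\widetilde W_n$ cannot be moved onto the band either, because it has poles at the nodes $\alpha_{n,j}$, which fill $[A,B]$ at spacing $O(1/n)$. The outer model and Airy parametrices therefore have nothing to model.

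The natural deformation goes the other way. $f/\omega_n$ has the same residues as $W_n$ at the nodes and is otherwise singular only on $(-\infty,0]$. Hence $Y\begin{psmallmatrix}1&-f/\omega_n\\0&1\end{psmallmatrix}$ has its only jump on $(-\infty,0]$. For real $0<s<1$ that weight has constant sign, and all zeros of $Q_n$ are in fact negative. So the zeros of $Q_n$ should be expected near $(-\infty,0]$, not on a band inside $[A,B]$.

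\textbf{The obstacle you flag.} Your proposed remedy does not remove it.
\begin{itemize}
\item Lemma~\ref{lem:En-def} holds only on compacts at positive distance from $[A,B]$. The band and $U_c,U_d$ meet $[A,B]$, where $\Omega_n$ vanishes at every node. Euler--Maclaurin gives $1/\Omega_n=e^{n\Van/2}\bigl((\zeta-A)(\zeta-B)\bigr)^{-1/2}(1+O(1/n))$ only off $[A,B]$, which is not where the weight is needed.
\item The input $\mathcal E_n=e^{o(n)}$ is not available as stated. $\mathcal E_n$ carries the factor $n^{-(2n+1)}=e^{-(2n+1)\log n}$. It is cancelled only by a right factor $n^{-n\sigma_3}$ in \eqref{eq:T-to-Sg}, which is needed anyway for $S\to I$.
\item Lemma~\ref{lem:Ln-subexp} fails as stated. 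For real $s>1$ the leading coefficient of $\widetilde L_n$ is $\widetilde f_n^{(2n)}(\eta)/(2n)!\gtrsim B^{-2n}/n$, so by Cauchy's estimate $\max_{|\zeta|=2B}|\widetilde L_n|\gtrsim 4^n/n$.
\item Your fallback $R=I+o(1)$ does not follow. The $O(n^{-1})$ Airy error is conjugated by $N w^{\sigma_3/2}$ and by the Szeg\H{o} function, and only $e^{o(n)}$ bounds would be available for these.
\end{itemize}

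\textbf{Item (i).} Your Cauchy--Riemann identity concerns a different function from the $\ph$ of \eqref{eq:phase-def}. Reading $\kappa$ as the measure $\kappa(t)\,dt$, off $[A,B]$ one has $\Re\ph=2U^{\mu_*}+2U^{\kappa}-\ell=2U^{\mu_*+\kappa}-\ell$. Near any $x\in(c,d)$ this gives
\[
\Re\ph(x+iy)=-2\pi\bigl(\mu_*'(x)+\kappa(x)\bigr)|y|+o(|y|)<0
\]
on both sides of the band. On the upper side, your continuation of $g_+-g_-$ differs from $-\ph$ by $\Van_+-V=-2\pi i\int_x^B\kappa$, because $\Van$ jumps across $(A,B)$. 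Similarly, after \eqref{eq:T-to-Sg} the $(1,2)$ entry of the jump on $\Gamma$ is $\mathcal E_n e^{n(2g+\frac12\Van-\ell)}$, not $e^{-n\ph}$. So (R4) says nothing about its size.

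\textbf{The paper's proof.} It does not supply these ingredients; it only cites lemmas.
\begin{itemize}
\item Lemma~\ref{lem:sign-varphi} uses a maximum principle. That gives no sign information next to a set where $\Re\ph=0$, and $\Re\ph\to-\infty$ at $\infty$.
\item Lemma~\ref{lem:N-properties} and Proposition~\ref{prop:matching} use the unweighted jump $\begin{psmallmatrix}0&1\\-1&0\end{psmallmatrix}$. But every factor in the chain ($J_{T,j}$, the compressed jump, $e^{-n(g_+-g_-)\sigma_3}$, and \eqref{eq:Jlip}) is upper triangular. The jump on $\Delta$ is therefore upper triangular and never equals that of $N$.
\item Lemma~\ref{lem:R-jumps} estimates $J_R$ on neither $\Delta$ nor $\Gamma$.
\end{itemize}
You have correctly sensed that something essential is missing, but neither your plan nor the cited results provide it. None of (i)--(iv) is established along this route.
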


\begin{proof}
Item (i) follows from Lemma~\ref{lem:sign-varphi}, which provides the required sign structure for $\Re\ph$
and, in particular, a uniform positivity margin on the lens lips away from the endpoint disks.

More precisely, condition \textup{(R4)} in Definition~\ref{def:regular-onecut} provides a strict KKT margin off the band,
which yields a uniform sign bound $\Re\ph\ge c_0>0$ on the lens lips outside $U_c\cup U_d$; 
consequently, the corresponding jumps are exponentially close to $I$, and the small-norm step applies.

Item (ii) is proved in Section~\ref{subsec:outer}; see Lemma~\ref{lem:N-properties} for existence
and the explicit form of the outer parametrix $N$.

Item (iii) is obtained by the explicit Airy constructions in Section~\ref{subsec:airy-parametrices} together
with the matching statement of Proposition~\ref{prop:matching}.

Finally, item (iv) is the small-norm conclusion of Theorem~\ref{thm:small-norm}, based on the jump estimates
from Lemma~\ref{lem:lip-small} and Lemma~\ref{lem:R-jumps}.
\end{proof}

\begin{corollary}[Main asymptotics for $Y$]
\label{cor:main-asymptotics-Y}
Let $\mathcal P$ be the global parametrix defined in \eqref{eq:global-parametrix} and
let $\zeta=z/n$. Then, for any compact $K\Subset\C\setminus[A,B]$,
\begin{equation}
\label{eq:main-Y}
Y(z)=e^{\frac{n\ellKKT}{2}\sigma_3}\,
\Bigl(I+O\!\left(\frac{1}{n}\right)\Bigr)\,
\mathcal P(\zeta)\,
e^{ng(\zeta)\sigma_3}\,
e^{-\frac{n\ellKKT}{2}\sigma_3},
\qquad z=n\zeta,\ \zeta\in K,
\end{equation}
as $n\to\infty$, uniformly in $\zeta\in K$.
More precisely, $\mathcal P(\zeta)=N(\zeta)$ for $\zeta\in K\setminus(\overline U_c\cup\overline U_d)$,
while $\mathcal P(\zeta)=P^{(c)}(\zeta)$ (resp.\ $P^{(d)}(\zeta)$) if $\zeta\in K\cap U_c$
(resp.\ $\zeta\in K\cap U_d$).
\end{corollary}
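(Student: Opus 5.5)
The corollary is the inversion of the chain $Y\mapsto T\mapsto\widetilde T\mapsto S\mapsto R$ at a point $z=n\zeta$ with $\zeta\in K$. Everything rests on Theorem~\ref{thm:DZ-regular-regime}(iv), which gives $R=I+O(1/n)$ uniformly on $\C$ away from the jump contour $\Sigma_R$.

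First I will arrange the contours relative to $K$. Since $\dist(K,[A,B])>0$, I choose the compression contour $\Gamma$ (Section~\ref{subsec:choice-Gamma}) and the lens lips tightly around $[A,B]$ and $[c,d]$, so that $K$ lies in the exterior of $\Gamma$ and off $\Sigma_{\mathrm{lens}}$. This is permitted because Lemma~\ref{lem:Ln-subexp} and the lens estimates hold for any admissible fixed $\Gamma$. For $\zeta\in K$ outside $\Gamma$, the transformations act as follows:
\begin{itemize}
\item the pole-removal factors in \eqref{eq:Y-to-T} are trivial, so $T=Y$;
\item the compression factor is trivial outside $\Gamma_n$, so $\widetilde T=T$;
\item the lens factors are trivial off the lens.
\end{itemize}
What remains is the $g$-transformation. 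In the form fixed in Section~\ref{sec:g-and-lens}, I expect
$S(\zeta)=e^{-\frac{n\ell}{2}\sigma_3}Y(n\zeta)\,e^{-ng(\zeta)\sigma_3}e^{\frac{n\ell}{2}\sigma_3}$, up to the scaling constants $n^{\pm n\sigma_3}$ absorbed into $g$ via $z^n=n^n\zeta^n$. This is what produces the conjugations in \eqref{eq:main-Y}.

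Second, I will write $S=R\,\mathcal P$ with the global parametrix $\mathcal P$ from \eqref{eq:global-parametrix}. Here $\mathcal P=N$ outside $\overline U_c\cup\overline U_d$ and $\mathcal P=P^{(c)},P^{(d)}$ inside the disks. Solving for $Y$ gives
\[
Y(n\zeta)=e^{\frac{n\ell}{2}\sigma_3}R(\zeta)\mathcal P(\zeta)e^{ng(\zeta)\sigma_3}e^{-\frac{n\ell}{2}\sigma_3}.
\]
Substituting $R=I+O(1/n)$ yields \eqref{eq:main-Y}, provided the bound on $R$ is uniform on $K$. It is: $R$ is given by the Cauchy integral representation of the small-norm problem, and $K$ stays a fixed positive distance from $\Sigma_R$, so the $L^2$ bound on the jump $J_R-I=O(1/n)$ (Lemma~\ref{lem:R-jumps}) becomes a uniform pointwise bound. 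The statement allows $K$ to meet $U_c$ or $U_d$ only through the parts of those disks lying off $[A,B]$. There $P^{(c)}$ and $P^{(d)}$ are analytic and bounded at each fixed $n$, and $R$ is still controlled.

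The main obstacle is solvability of the small-norm problem together with the $\exp(o(n))$ error factor $\mathcal E_n$ from the factorization of $W_n$. On $\Gamma$, the jump of $S$ carries $W_n e^{-2ng}$ times conjugation constants. This must be exponentially small, i.e. the gap in $\Re$ of the phase on $\Gamma$ must dominate both the $O(\log n)$ terms from Proposition~\ref{prop:omega-derivative-exp} and the $o(n)$ from Lemma~\ref{lem:Ln-subexp}. I would use the strict margin (R4) together with harmonicity of $\Re\varphi$ off $[A,B]$ to obtain a fixed $c_0>0$, then absorb $e^{o(n)}$ for large $n$. Finally, $\mathrm{(ND)}_n$ guarantees via Lemma~\ref{lem:Y-from-PQ} that the constructed $Y$ coincides with the unique solution of Lemma~\ref{lem:Y-unique}, so the recovery formula \eqref{eq:recover-PQ} applies.
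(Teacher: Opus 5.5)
Your skeleton is the paper's: take $R=I+O(1/n)$ from Theorem~\ref{thm:small-norm}, write $S=R\,\mathcal P$, and undo \eqref{eq:T-to-Sg}. Your device of pulling $\Gamma$ and the lens in so that $K$ lies in the exterior of $\Gamma$ is a sharper justification of the last step than the paper's remark that reverting explicit factors ``preserves the $O(1/n)$ error''. Outside $\Gamma$ the pole-removal, compression and lens factors are identically $I$, so $\widetilde T=Y$ on $K$. Inside $\Gamma_n$, by contrast, $Y=\widetilde T F^{-1}$ with a nontrivial unipotent $F$, and that right factor does not disappear into the left error term.

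The genuine gap is the step you wave through as ``scaling constants absorbed into $g$''. Since $g(\zeta)=\log\zeta+O(\zeta^{-1})$ and $Y(n\zeta)=(I+O(\zeta^{-1}))(n\zeta)^{n\sigma_3}$, the map \eqref{eq:T-to-Sg} gives $S(\zeta)\to n^{n\sigma_3}$ as $\zeta\to\infty$. So $R=S\mathcal P^{-1}$ cannot tend to $I$ at infinity unless a factor $n^{-n\sigma_3}$ is inserted. Placing it on the right, as your ``absorbed into $g$'' does (i.e.\ replacing $g$ by $g+\log n$), undoing the corrected map yields \eqref{eq:main-Y} with an extra diagonal factor $n^{n\sigma_3}$ on the right, not \eqref{eq:main-Y} as printed. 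This is not cosmetic, because the printed formula is false. Take $K=\{|\zeta|=\rho\}$ with $\rho>B+\delta$. There the $(1,1)$ entry of the right-hand side is $[(I+O(1/n))N]_{11}\,e^{ng(\zeta)}=O\bigl((\rho+B)^n\bigr)$. On the other hand, $Y_{11}(n\zeta)=Q_n(n\zeta)$ with $Q_n$ monic of degree $n$, so $\max_{|\zeta|=\rho}|Q_n(n\zeta)|\ge(n\rho)^n$. You should state and prove the corrected formula rather than assert that the stated conjugations come out.

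Two further points.

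\emph{The disk clause.} Because $U_c,U_d$ must avoid $\Gamma$, your construction puts them inside $\Gamma$, so $K\cap(U_c\cup U_d)=\varnothing$ and the $P^{(c)},P^{(d)}$ clause is vacuous. Your remark that $P^{(c)},P^{(d)}$ are ``analytic and bounded at each fixed $n$'' gives nothing uniform in $n$. If instead the disks keep a fixed radius so that $K$ meets them, those points lie inside $\Gamma$, where $\widetilde T(n\zeta)\neq Y(n\zeta)$. The formula then acquires a unipotent right factor with $(1,2)$ entry $\pm n^{2n}\widetilde W_n e^{n(2g-\ellKKT)}$, and possibly a lens factor.

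\emph{The jump on $\Gamma$.} In your ``main obstacle'' paragraph, the jump of the normalized $S$ on $\Gamma$ is $\begin{psmallmatrix}1&n^{2n}\widetilde W_n e^{n(2g-\ellKKT)}\\0&1\end{psmallmatrix}$, with $e^{+2ng}$ rather than $e^{-2ng}$. The $n^{2n}$ here is exactly what cancels the $n^{-(2n+1)}$ that Lemma~\ref{lem:En-def} miscounts as $e^{o(n)}$. By \eqref{eq:Wn-scaled-barycentric} and Lemma~\ref{lem:Omega-logpot}, its modulus is $|\widetilde L_n|\,n^{-1}\exp\bigl(n\Re(2g+\tfrac12\Van-\ellKKT)+O(1)\bigr)$. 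That is not $\Re\ph=\Re(-2g+\Van-\ellKKT)$, the quantity whose sign condition (R4) of Definition~\ref{def:regular-onecut} controls on $[A,B]$. In any case $\Re\ph\to-\infty$ at infinity, so harmonicity cannot push a margin from $[A,B]$ out to a curve surrounding it. For the corollary itself you may, as the paper does, take Theorem~\ref{thm:small-norm} as a black box. Be aware, though, that neither its proof nor Lemma~\ref{lem:R-jumps} estimates the jump on the $\Gamma$-component of $\Sigma_R$.
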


\begin{proof}
By Theorem~\ref{thm:small-norm} we have $R(\zeta)=I+O(1/n)$ uniformly for $\zeta\in\C\setminus\Sigma_R$.
Since $S(\zeta)=R(\zeta)\mathcal P(\zeta)$ by \eqref{eq:def-R}, we obtain
$S(\zeta)=\bigl(I+O(1/n)\bigr)\mathcal P(\zeta)$ uniformly on compacts away from $\Sigma_R$.
Reverting the $g$-transformation \eqref{eq:T-to-Sg} yields \eqref{eq:main-Y}.

Since the remaining transformations (lens opening, contour compression, and pole removal) are given by explicit
multiplications with piecewise-analytic factors, the $O(1/n)$ error is preserved when reverting them, uniformly
on compact subsets of $\C\setminus[A,B]$ (equivalently, away from the endpoint disks in the $\zeta$-plane).

Since the remaining transformations (lens opening, contour compression, and pole removal) are given by explicit
multiplications with piecewise-analytic factors, the $O(1/n)$ error is preserved when reverting them, uniformly
on compact subsets of $\C\setminus[A,B]$ (equivalently, away from the endpoint disks in the $\zeta$-plane).
\end{proof}

\begin{remark}[Proof strategy]
\label{rem:proof-strategy}
Assume $\mathrm{(ND)}_n$ so that the normalized multipoint Pad\'e pair $(P_n,Q_n)$ exists and is unique.
We encode $(P_n,Q_n)$ as the solution $Y$ of a pole Riemann--Hilbert problem and then perform a sequence of explicit
transformations: pole removal (to eliminate the discrete poles), contour compression, the $g$-transformation,
and lens opening.
Next, we construct the global parametrix $N$ and Airy-type local parametrices near the endpoints, and define the ratio
matrix $R$ by dividing out the parametrix.
The resulting ratio problem has jumps $J_R$ satisfying $J_R-I\in L^2\cap L^\infty$ with size $O(1/n)$ on
$\partial U_c\cup\partial U_d$ and exponentially small size on the lens lips, hence the small-norm theory yields
$R=I+O(1/n)$.
Reverting all transformations gives the main asymptotics for $Y$ in \eqref{eq:main-Y}, and the strong asymptotics for
$(P_n,Q_n)$ follow via the recovery formula \eqref{eq:recover-PQ}.
\end{remark}

\begin{remark}[Proof strategy]
\label{rem:proof-strategy}
Assume $\mathrm{(ND)}_n$ so that the normalized multipoint Pad\'e pair $(P_n,Q_n)$ exists and is unique.
We encode $(P_n,Q_n)$ as the solution $Y$ of a pole Riemann--Hilbert problem and then perform a sequence of explicit
transformations:
\begin{enumerate}[label=\textup{(\roman*)}, leftmargin=2.2em]
\item pole removal (elimination of discrete poles),
\item contour compression,
\item the $g$-transformation,
\item lens opening.
\end{enumerate}
We then construct the outer parametrix $N$ and Airy-type local parametrices at the endpoints, define the ratio matrix
$R$ by dividing out the parametrix, and reduce the analysis to a small-norm Riemann--Hilbert problem.
This yields $R=I+O(1/n)$, hence the main asymptotics for $Y$ in \eqref{eq:main-Y}; the strong asymptotics for the Pad\'e
polynomials $(P_n,Q_n)$ follow from the recovery formula \eqref{eq:recover-PQ}.
\end{remark}

\begin{proof}[Proof of Theorem~\ref{thm:main}]
Fix $n$ sufficiently large and assume that $\mathrm{(ND)}_n$ holds.
By Proposition~\ref{prop:ND-uniq} there exists a unique monic multipoint Pad\'e denominator $Q_n$ of degree $n$
and a corresponding numerator $P_n$ satisfying \eqref{eq:pade-mp-conditions}.
By Lemma~\ref{lem:Y-from-PQ} (see Section~\ref{sec:rhp-poles}), the pair $(P_n,Q_n)$ gives rise to a solution
$Y$ of Problem~\ref{prob:Y-pole}, which is unique by Lemma~\ref{lem:Y-unique}. In particular,
$Q_n=Y_{11}$ and $P_n=Y_{12}$ in the normalization \eqref{eq:Y-infty}.

Assume next that the equilibrium measure $\mu_*$ belongs to the regular one-cut soft-edge regime
(Definition~\ref{def:regular-onecut}).
Then the phase $\ph$ has the sign structure required for opening lenses and for the Airy endpoint analysis,
and the Deift--Zhou steepest descent scheme applies (Theorem~\ref{thm:DZ-regular-regime}).
In particular, after the sequence of transformations described in
Section~\ref{subsec:outline} (pole removal, contour compression, $g$-transformation, and lens opening),
one constructs an outer parametrix $N$ and local Airy parametrices $P^{(c)},P^{(d)}$
(Section~\ref{sec:parametrices}) and obtains a ratio problem for $R$ whose jumps satisfy
$J_R=I+O(e^{-c n})$ on the lens lips and $J_R=I+O(1/n)$ on $\partial U_c\cup\partial U_d$
(Lemmas~\ref{lem:lip-small} and~\ref{prop:matching}).
The small-norm theorem yields
\begin{equation*}
R(\zeta)=I+O(1/n),
\end{equation*}
uniformly for $\zeta\in\C\setminus\Sigma_R$ (Theorem~\ref{thm:small-norm}).

Undoing the transformations gives the main asymptotics for $Y$ in \eqref{eq:main-Y}, and hence the stated
strong asymptotics for the Pad\'e polynomials $(P_n,Q_n)$ via \eqref{eq:recover-PQ}.
\end{proof}

\begin{remark}[Why the main result is stated in a regime]
\label{rem:why-regime}
Two independent issues prevent one from stating a completely unconditional theorem for arbitrary input data.

\smallskip
\noindent\textbf{(i) Nondegeneracy is not automatic.}
For fixed distinct nodes $\{a_{n,j}\}_{j=0}^{2n}$, the multipoint Pad\'e problem may be \emph{degenerate} for
special choices of the values $\{f(a_{n,j})\}$: in the determinantal formulation
\eqref{eq:ND-linear-system} this corresponds precisely to $\det M_n=0$
(Definition~\ref{def:NDn}). Thus solvability and uniqueness of the normalized Pad\'e pair cannot be asserted
unconditionally for all possible data.

\smallskip
\noindent\textbf{(ii) The equilibrium problem may undergo phase transitions.}
Depending on the density $\kappa$ (equivalently, on the external field $V$), the constrained equilibrium
measure $\mu_*$ need not have a single-interval support with soft edges. In general, the support of $\mu_*$
may split into several components and/or saturated regions $\{\mu_*'=\kappa\}$ may appear.
In such non-regular phases the model Riemann--Hilbert problem and the local analysis change substantially:
one typically needs different global parametrices (often of higher genus) and different endpoint or junction
parametrices.

In such non-regular regimes one typically faces either multi-cut bands or saturation/active constraints, and the
macroscopic description may no longer reduce to a scalar equilibrium measure. Instead, vector equilibrium problems
(with interaction matrices such as the Angelesco matrix) arise naturally in related Hermite--Pad\'e / multiple
orthogonality settings; see, e.g., Lysov--Tulyakov~\cite{LysovTulyakovAngelesco} for a representative treatment.

\smallskip
For these reasons we formulate the main theorem in the regular one-cut soft-edge regime
(Definition~\ref{def:regular-onecut}) together with the nondegeneracy condition $\mathrm{(ND)}_n$ for large
$n$. This is the standard setting in which the Deift--Zhou steepest descent method leads to Airy endpoint behavior 
(see, e.g., \cite{DeiftBook,DeiftZhou,KuijlaarsSurvey}) parametrices and a small-norm ratio problem.
\end{remark}

\section{The $g$-function and the phase $\varphi$}
\label{sec:g-function}

Throughout this section, $\mu_*$ denotes the equilibrium measure from
Proposition~\ref{prop:equilibrium-ex-uniq}, and we work in the regular one-cut soft-edge regime, 
i.e.\ under Definition~\ref{def:regular-onecut}.

\subsection{Definition of $g$ and boundary value relations}
\label{subsec:g-def}

Define the $g$-function by
\begin{equation}
\label{eq:g-def}
g(\zeta):=\int_A^B \log(\zeta-x)\,d\mu_*(x),
\qquad \zeta\in\C\setminus[A,B],
\end{equation}
where $\log(\zeta-x)$ uses the branch convention fixed in Section~\ref{sec:intro}.
Then $g$ is analytic in $\C\setminus[A,B]$ and admits boundary values $g_\pm$ on $(A,B)$.

\begin{lemma}[Jump relations for $g$]
\label{lem:g-jumps}
For $x\in(A,B)$ we have
\begin{equation}
\label{eq:g-plus-minus}
g_+(x)-g_-(x)=2\pi i\int_x^B d\mu_*(t)=:2\pi i\,\mu_*([x,B]).
\end{equation}
Moreover,
\begin{equation}
\label{eq:g-sum}
g_+(x)+g_-(x)=2\int_A^B \log|x-t|\,d\mu_*(t),\qquad x\in(A,B).
\end{equation}
\end{lemma}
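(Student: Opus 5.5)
We compute the boundary values of $\log(\zeta-t)$ pointwise in $t$ and then integrate against $\mu_*$. With the principal branch cut along $(-\infty,0]$, the function $\zeta\mapsto\log(\zeta-t)$ has its cut along $\zeta\in(-\infty,t]$. Fix $x\in(A,B)$ and $t\in[A,B]$, and approach $x$ from above ($+$) and below ($-$).

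For $t<x$, the point $x-t$ lies on the positive real axis. The logarithm is continuous there, so $\log_+(x-t)=\log_-(x-t)=\log|x-t|$.

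For $t>x$, we have $x-t<0$. Then $\log_\pm(x-t)=\log|x-t|\pm i\pi$, using the standard orientation where $+$ means approach from the upper half-plane, consistent with the Sokhotski--Plemelj convention fixed in Section~\ref{sec:intro}.

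The point $t=x$ is excluded from these formulas. This is harmless because $\mu_*$ is absolutely continuous, indeed $\mu_*'\le\kappa$ with $\kappa$ bounded, so $\mu_*(\{x\})=0$.

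Next we justify passing the boundary limits under the integral in \eqref{eq:g-def}. For $\zeta=x\pm i\varepsilon$ with $0<\varepsilon\le1$, the real part $\log|\zeta-t|$ is dominated by $|\log|x-t||+C$. This majorant is integrable against $\mu_*$, since $d\mu_*\le\kappa\,dt$ and $\log|x-t|\in L^1(dt)$. The imaginary part $\arg(\zeta-t)$ is bounded by $\pi$. Both converge pointwise for $t\neq x$, so dominated convergence applies. This gives
\[
g_\pm(x)=\int_A^B\log|x-t|\,d\mu_*(t)\pm i\pi\,\mu_*((x,B]).
\]

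Subtracting the two boundary values yields \eqref{eq:g-plus-minus}; since $\mu_*$ has no atoms, $\mu_*((x,B])=\mu_*([x,B])$. Adding them yields \eqref{eq:g-sum}.

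The only point requiring care is the dominated-convergence step near $t=x$, which rests on the bound $\mu_*'\le\kappa$ built into the admissible class $\mathcal A$. If the paper's orientation convention for $\pm$ were reversed, the sign in \eqref{eq:g-plus-minus} would flip. I will therefore state explicitly that $+$ denotes the limit from the upper half-plane.
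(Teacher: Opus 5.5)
Your proposal is correct and follows the same route as the paper: both compute the boundary values of $\log(\zeta-t)$ pointwise (a jump of $2\pi i$ when $t>x$, none when $t<x$) and then integrate against $\mu_*$. Your version spells out what the paper's one-line appeal to Sokhotski--Plemelj leaves implicit, namely the dominated-convergence step (using $\mu_*'\le\kappa$) and the fact that $\mu_*$ has no atom at $x$.
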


\begin{proof}
The relations follow from the Sokhotski--Plemelj formula for the logarithm and the fact that
$\log(\zeta-x)$ has a jump of $2\pi i$ across the cut when $\zeta$ crosses $(A,B)$.
Taking imaginary parts gives \eqref{eq:g-plus-minus}, while adding boundary values yields \eqref{eq:g-sum}.
\end{proof}

Define the analytic continuation of the external field by
\begin{equation}
\label{eq:Van-def}
\Van(\zeta):=-2\int_A^B \log(\zeta-t)\,\kappa(t)\,dt,\qquad \zeta\in\C\setminus[A,B],
\end{equation}
so that $\Re\Van(x)=V(x)$ for $x\in(A,B)$.

\subsection{Definition of the phase and Euler--Lagrange interpretation}
\label{subsec:phase-def}

We define the phase function by

\begin{equation}
\label{eq:phase-def}
\ph(\zeta):=-\bigl(g_+(\zeta)+g_-(\zeta)\bigr)+\Van(\zeta)-\ellKKT,
\qquad \zeta\in\C\setminus[A,B].
\end{equation}

where $\ellKKT$ is the KKT constant from Proposition~\ref{prop:KKT}.
On $(A,B)$, the boundary values $\ph_\pm$ are understood by taking boundary values of $g$ and $\Van$.

\begin{lemma}[Phase on the real line]
\label{lem:phase-real}
For $x\in(A,B)$,
\begin{equation}
\label{eq:Rephase-real}
\Re \ph_\pm(x)= 2U^{\mu_*}(x)+V(x)-\ellKKT.
\end{equation}
In particular, on the band $\Delta=[c,d]$ one has $\Re\ph_\pm(x)=0$, while on the void and saturated sets
$\Re\ph_\pm(x)\ge 0$ and $\Re\ph_\pm(x)\le 0$, respectively, in the sense of the KKT inequalities.
\end{lemma}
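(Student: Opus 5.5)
The plan is a direct computation from the definition \eqref{eq:phase-def}, using the boundary-value relations of Lemma~\ref{lem:g-jumps} together with the corresponding relations for $\Van$. On $(A,B)$ the combination $g_+(x)+g_-(x)$ in \eqref{eq:phase-def} is read literally as the sum of the two boundary values of $g$. The quantity $\ph_\pm(x)$ is then obtained by inserting this sum and the boundary value $\Van_\pm(x)$ into \eqref{eq:phase-def}.

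\textbf{Step 1 (the $g$-term).} By \eqref{eq:g-sum},
\[
-\bigl(g_+(x)+g_-(x)\bigr)=-2\int_A^B\log|x-t|\,d\mu_*(t)=2U^{\mu_*}(x).
\]
This is real, so it contributes exactly $2U^{\mu_*}(x)$ to $\Re\ph_\pm(x)$, whichever side is taken.

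\textbf{Step 2 (the $\Van$-term).} With the principal branch, $\log(\zeta-t)=\log|\zeta-t|+i\arg(\zeta-t)$, and $\arg(\zeta-t)\to 0$ or $\pm\pi$ as $\zeta\to x\pm i0$. The integrand in \eqref{eq:Van-def} is integrable near $t=x$ and the convergence is dominated, so
\[
\Van_\pm(x)=-2\int_A^B\log|x-t|\,\kappa(t)\,dt\;\mp\;2\pi i\int_x^B\kappa(t)\,dt.
\]
Hence $\Re\Van_\pm(x)=V(x)$, which is the identity already noted after \eqref{eq:Van-def}.

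\textbf{Step 3 (conclusion).} Since $\ellKKT\in\R$ by Proposition~\ref{prop:KKT}, adding Steps 1 and 2 gives \eqref{eq:Rephase-real}. The sign statements then follow by substituting the KKT relations \eqref{eq:KKT-band}--\eqref{eq:KKT-sat} into \eqref{eq:Rephase-real}. On $\Delta$, (R2) places a.e.\ point of $(c,d)$ in the band set, so the right-hand side vanishes there. On the void and saturated sets the inequalities \eqref{eq:KKT-void} and \eqref{eq:KKT-sat} give $\Re\ph_\pm\ge0$ and $\Re\ph_\pm\le0$, respectively.

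These relations hold a.e., matching Proposition~\ref{prop:KKT}. They hold everywhere wherever $2U^{\mu_*}+V$ is continuous, which is the case on $(A,B)$ because $\mu_*'\le\kappa$ is bounded. The only point needing care is the interpretation of $g_\pm$ and $\Van_\pm$ as boundary values (the dominated-convergence justification in Step 2). Beyond that there is no real obstacle.
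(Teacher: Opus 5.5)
Your proposal is correct and follows the paper's proof: you use \eqref{eq:g-sum} to get $-(g_++g_-)=2U^{\mu_*}$, combine it with $\Re\Van_\pm=V$ and $\ellKKT\in\R$, and read off the signs from Proposition~\ref{prop:KKT}. Your explicit formula $\Van_\pm(x)=V(x)\mp 2\pi i\int_x^B\kappa(t)\,dt$ and the continuity argument extending the band equality to all of $[c,d]$ are extra detail that the paper does not write out.
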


\begin{proof}
By Lemma~\ref{lem:g-jumps}, $g_+(x)+g_-(x)=2\int \log|x-t|\,d\mu_*(t)=-2U^{\mu_*}(x)$.
Therefore $-(g_+(x)+g_-(x))=2U^{\mu_*}(x)$.
By definition, $\Re\Van(x)=V(x)$. Substituting into \eqref{eq:phase-def} yields \eqref{eq:Rephase-real}.
The sign statements follow from Proposition~\ref{prop:KKT}.
\end{proof}

\begin{remark}
For $\zeta\in\C\setminus[A,B]$ we have $g_+(\zeta)=g_-(\zeta)=g(\zeta)$, hence
\[
\ph(\zeta)=-2g(\zeta)+\Van(\zeta)-\ellKKT.
\]
\end{remark}

\subsection{Sign structure of $\Re\varphi$ and lens preparation}
\label{subsec:sign-varphi}

Let $\Delta=[c,d]$ be the band from Definition~\ref{def:regular-onecut}.
We will open lenses around $\Delta$ so that the exponential factors are governed by $\Re\ph$.

\begin{lemma}[Sign structure away from the endpoints]
\label{lem:sign-varphi}
There exist $\delta>0$ and $c_0>0$ such that
\begin{equation}
\label{eq:Revarphi-positive}
\Re \ph(\zeta)\ge c_0,
\qquad 
\zeta\in \Sigma_{\mathrm{lip}}^{(u)}\cup\Sigma_{\mathrm{lip}}^{(l)}\setminus(U_c\cup U_d),
\end{equation}
for any choice of lens lips $\Sigma_{\mathrm{lip}}^{(u/l)}$ contained in a fixed $\delta$-neighborhood of
$\Delta$ and connecting $\partial U_c$ to $\partial U_d$ as in Section~\ref{subsec:lens-opening}.
\end{lemma}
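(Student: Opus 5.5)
The plan is the standard local argument: on the band $\Re\ph$ vanishes on both sides, and its derivative in the direction normal to the band is controlled by the gap $\kappa-\mu_*'$, which is strictly positive by (R2). A Cauchy--Riemann argument in a thin strip then gives positivity of $\Re\ph$ off the real axis, and compactness turns this into a uniform margin on the part of the lips outside the endpoint disks.

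\textbf{Step 1: boundary values of $\ph$ on the band.} By Lemma~\ref{lem:phase-real} and (R2) we have $\Re\ph_\pm(x)=0$ for $x\in(c,d)$. For the imaginary parts, differentiate $\ph=-2g+\Van-\ellKKT$ along the band. By Lemma~\ref{lem:g-jumps}, $\Im g_+(x)=\pi\mu_*([x,B])$ on $(A,B)$. The same Plemelj computation applied to \eqref{eq:Van-def} gives $\Im \Van_+(x)=-2\pi\int_x^B\kappa(t)\,dt$. Hence on $(c,d)$
\[
\frac{d}{dx}\Im\ph_+(x)=2\pi\bigl(\kappa(x)-\mu_*'(x)\bigr)\cdot(\pm1),
\]
with the mirror identity for $\ph_-$. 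I will fix the overall sign carefully against the branch conventions of Section~\ref{sec:intro}; this bookkeeping is the main place where an error could creep in. The structural point is that the coefficient is $\kappa-\mu_*'$, not $\mu_*'$ alone, because $\Van$ carries its own cut on $[A,B]$. By (R2), continuity of $\mu_*'$ in the interior of the band, and real-analyticity of $\kappa$, this quantity is bounded below by some $m_\varepsilon>0$ on $[c+\varepsilon,d-\varepsilon]$. I will take $\varepsilon$ smaller than the radii of $U_c$ and $U_d$.

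\textbf{Step 2: Cauchy--Riemann in a strip.} The function $\ph$ is analytic in the upper half of a small neighbourhood of $(c,d)$ and extends analytically across $(c+\varepsilon,d-\varepsilon)$ from above. This is the interior analytic continuation of $\mu_*'$ on the band, which I will assume is provided by the regular regime (real-analytic $\kappa$ gives a real-analytic density on the band). Cauchy--Riemann gives $\partial_y\Re\ph(x+i0)=-\partial_x\Im\ph_+(x)$, which by Step 1 equals $2\pi(\kappa-\mu_*')(x)\ge 2\pi m_\varepsilon$ once the sign is correctly oriented. A second-order Taylor expansion in $y$, with derivatives bounded uniformly on the compact set $[c+\varepsilon,d-\varepsilon]\times[0,\delta]$, then yields
\[
\Re\ph(x+iy)\ge \pi m_\varepsilon\, y \qquad \text{for } 0<y<\delta.
\]
The argument in the lower half-plane is symmetric, using $\ph_-$.

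\textbf{Step 3: uniformity on the lips.} Outside $U_c\cup U_d$, a lip joining $\partial U_c$ to $\partial U_d$ inside the $\delta$-neighbourhood projects into $[c+\varepsilon,d-\varepsilon]$. I will use the lens construction of Section~\ref{subsec:lens-opening} with lips that stay at distance at least $\eta>0$ from $\R$ outside the disks. Then Step 2 gives $c_0:=\pi m_\varepsilon\eta$.

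The main obstacle is this last uniformity. The statement says ``any choice'' of lips, but lips approaching the real axis between the disks would destroy the margin. So the claim must be read with this fixed separation $\eta$, with $c_0$ depending on $\eta$, $\varepsilon$ and $\delta$.
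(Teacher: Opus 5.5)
There is a genuine gap, and it sits in the Step~1 bookkeeping you flagged. It is a relative sign, not an overall orientation that can be fixed later. Your two Plemelj formulas are correct: $\Im g_+(x)=\pi\mu_*([x,B])$ and $\Im\Van_+(x)=-2\pi\int_x^B\kappa(t)\,dt$. But off $[A,B]$ we have $\ph=-2g+\Van-\ellKKT$, so $g$ enters with coefficient $-2$ and the two contributions add:
\[
\Im\ph_+(x)=-2\pi\mu_*([x,B])-2\pi\int_x^B\kappa(t)\,dt,
\qquad
\frac{d}{dx}\Im\ph_+(x)=2\pi\bigl(\mu_*'(x)+\kappa(x)\bigr).
\]
The combination $\kappa-\mu_*'$ would require $+2g$. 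Your Cauchy--Riemann step then gives $\partial_y\Re\ph(x+i0)=-2\pi(\mu_*'+\kappa)(x)<0$. So $\Re\ph$ is \emph{negative} just above the band, and symmetrically just below it.

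No branch convention can change this, because $\Re\ph$ involves only moduli: $\Re\ph(\zeta)=2U^{\mu_*}(\zeta)+2\int_A^B\log\frac{1}{|\zeta-t|}\,\kappa(t)\,dt-\ellKKT$. Hence for $x\in[c,d]$ and $y\neq0$,
\[
\Re\ph(x+iy)=\Re\ph_\pm(x)-\int_A^B\log\Bigl(1+\frac{y^2}{(x-t)^2}\Bigr)\bigl(d\mu_*(t)+\kappa(t)\,dt\bigr)<0,
\]
using $\Re\ph_\pm(x)=0$ on the band. Any upper lip joining $\partial U_c$ to $\partial U_d$ passes over points $x+iy$ with $y>0$ and $x\in(c,d)$ outside the disks. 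At those points \eqref{eq:Revarphi-positive} is false for the phase as defined in \eqref{eq:phase-def}, so your Steps~2--3 cannot be completed under any orientation.

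The paper takes a different route: it uses the strict margin (R4) off $\Delta$ plus a maximum-principle argument for $\Re\ph$, not (R2) with Cauchy--Riemann. The identity above shows that route fails too. (R4) only controls $\Re\ph$ on $[A,B]\setminus\Delta$, whereas $\Re\ph$ is a sum of potentials of positive measures minus a constant, hence superharmonic. Its vanishing on the band therefore forces negative values next to it.

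Your instinct that the margin should come from the interior gap condition in (R2) via Cauchy--Riemann is the standard mechanism, but only for a suitably normalized phase. For instance, take $\ph\pm4\pi i\int_\zeta^B\kappa(t)\,dt$ in the upper/lower lens region. It has the same real part on $\R$, and its real part grows away from the band at rate $2\pi(\kappa-\mu_*')$, which is exactly your coefficient. Adopting it changes \eqref{eq:phase-def} and the lip factors \eqref{eq:Jlip}, so it repairs the setup rather than proving the lemma as posed.

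Your Step~3 caveat is correct independently of all this: ``any choice'' of lips is too strong, and the lips must stay a fixed distance from $\R$ between the disks.
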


\begin{proof}
By Lemma~\ref{lem:phase-real}, $\Re\ph$ vanishes on the band $\Delta$.
Under Definition~\ref{def:regular-onecut}\textup{(R4)}, the KKT inequalities are strict off $\Delta$ with a uniform
margin. Since $\ph$ is harmonic in $\C\setminus[A,B]$ and continuous up to $\Delta$ away from the endpoints,
the maximum principle implies that $\Re\ph$ is strictly positive on lens lips chosen inside a fixed
neighborhood of $\Delta$ and away from $U_c\cup U_d$. The uniform margin yields \eqref{eq:Revarphi-positive}.
\end{proof}

\begin{remark}
The constant $c_0$ in \eqref{eq:Revarphi-positive} ultimately comes from the strict KKT margins 
in Definition~\ref{def:regular-onecut}\textup{(R4)}. This is the sole point where strictness (as opposed to weak
inequalities) is needed in the steepest descent analysis.
\end{remark}

\section{The $g$-transformation and opening of lenses}
\label{sec:g-and-lens}

In the scaled variable $\zeta=z/n$ we work with the fixed contour $\Gamma=\partial\mathcal D$ and with the
compressed triangular jump (Section~\ref{subsec:compression-barycentric} and
Remark~\ref{rem:triangular-corner}). Throughout we use the upper-triangular convention on $\Gamma$.

\subsection{Exponential factorization of the compressed jump}
\label{subsec:factorization-Wn}

Recall $W_n(z)=\sum_{j=0}^{2n}\frac{w_{n,j}}{z-a_{n,j}}$ and the barycentric identity $W_n=L_n/\omega_n$.
Set
\[
\widetilde W_n(\zeta):=W_n(n\zeta),\qquad \widetilde L_n(\zeta):=L_n(n\zeta),\qquad
\Omega_n(\zeta)=\prod_{j=0}^{2n}(\zeta-\alpha_{n,j}).
\]
Then
\begin{equation}
\label{eq:Wn-scaled-barycentric}
\widetilde W_n(\zeta)=\frac{\widetilde L_n(\zeta)}{n^{2n+1}\Omega_n(\zeta)}.
\end{equation}

\begin{lemma}[Subexponential factor $\mathcal E_n$]
\label{lem:En-def}
Let $K\Subset\C\setminus[A,B]$ be compact. There exists a function $\mathcal E_n$ analytic on a neighborhood
of $K$ such that, uniformly for $\zeta\in K$,
\begin{equation}
\label{eq:Wn-factorization}
\widetilde W_n(\zeta)
=
\exp\!\left(\frac{n}{2}\,\Van(\zeta)\right)\,\mathcal E_n(\zeta),
\qquad
\mathcal E_n(\zeta)=\exp(o(n)).
\end{equation}
\end{lemma}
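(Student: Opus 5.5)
The natural approach is to define $\mathcal E_n$ by the factorization itself and then estimate it. Set
\[
\mathcal E_n(\zeta):=\widetilde W_n(\zeta)\,\exp\!\Bigl(-\tfrac n2\,\Van(\zeta)\Bigr)
=\frac{\widetilde L_n(\zeta)}{n^{2n+1}}\cdot\frac{\exp\!\bigl(n\int_A^B\log(\zeta-t)\kappa(t)\,dt\bigr)}{\Omega_n(\zeta)},
\]
using \eqref{eq:Wn-scaled-barycentric} and \eqref{eq:Van-def}. All nodes $\alpha_{n,j}$ lie in $[A,B]$, and $\Van$ is analytic off $[A,B]$. Hence $\mathcal E_n$ is analytic on any neighborhood of $K$ that avoids $[A,B]$; on non-simply-connected pieces I would use $\exp(\Van)$ rather than $\Van$, which is single-valued because $\int\kappa=2$ is an integer. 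This settles the existence and analyticity part, and what remains is the growth statement $\frac1n\log|\mathcal E_n|\to0$ uniformly on $K$.

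The factor $\mathcal E_n$ splits into three pieces, which I would treat in order.

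\textbf{(a) Discrete versus continuous potential.} Lemma~\ref{lem:Omega-logpot}, together with the remark following it (the analytic version with $\log(\zeta-t)$), gives
\[
\frac{\exp\!\bigl(n\int\log(\zeta-t)\kappa\,dt\bigr)}{\Omega_n(\zeta)}=\exp(O(1))
\]
uniformly on $K$. This holds in modulus in general, and also in argument once the branch bookkeeping is done on simply connected pieces.

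\textbf{(b) Upper bound on $\widetilde L_n$.} Lemma~\ref{lem:Ln-subexp} gives $|\widetilde L_n|\le e^{\varepsilon n}$.

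\textbf{(c) The scalar $n^{-(2n+1)}$.} This factor is not $\exp(o(n))$ on its own. I would read \eqref{eq:Wn-factorization} as holding up to this $\zeta$-independent constant. That constant is removed by the constant diagonal conjugation that already accompanies the $g$-transformation: it shifts the constant $\ell$ by $O(\log n)$ per unit of $n$ and commutes with all jump and analyticity considerations. With this normalization, (a) and (b) give the upper half of $\mathcal E_n=\exp(o(n))$.

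\textbf{(d) Lower bound on $\widetilde L_n$ (main obstacle).} The two-sided statement needs $|\widetilde L_n(\zeta)|\ge e^{-\varepsilon n}$ on $K$. I would try first the Hermite--Walsh formula of Lemma~\ref{lem:hermite-walsh} for $\zeta$ outside $\overline{\mathcal D}$. There, the residue at $\xi=\zeta$ is absent, so
\[
\widetilde L_n(\zeta)=-\frac{\Omega_n(\zeta)}{2\pi i}\oint_\Gamma\frac{\widetilde f_n(\xi)}{(\xi-\zeta)\Omega_n(\xi)}\,d\xi .
\]
Next, I would shrink $\Gamma$ onto $[A,B]$ and use the Euler--Maclaurin-type representation of $\zeta(s,n\xi)\sim (n\xi)^{1-s}/(s-1)$, which holds uniformly on $\Gamma$ since $\Re(n\xi)\to\infty$. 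This reduces the integral to
\[
\frac{n^{1-s}}{s-1}\oint_\Gamma\frac{\xi^{1-s}}{(\xi-\zeta)\Omega_n(\xi)}\,d\xi ,
\]
that is, to the divided difference of $\xi^{1-s}/(\xi-\zeta)$ over the nodes. I would then estimate this from below by a steepest-descent deformation of $\Gamma$ toward the point of $[A,B]$ where $\int\log|\xi-t|\kappa\,dt$ is extremal. The difficulty is excluding cancellation that makes $\widetilde L_n$ exponentially small at isolated $\zeta$. If a full lower bound is out of reach, the fallback is to establish $\frac1n\log|\mathcal E_n|\to0$ uniformly on $K$ minus neighborhoods of finitely many zeros, and then upgrade to all of $K$ using harmonicity of the limit.
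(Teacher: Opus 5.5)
\textbf{Verdict.} There is a genuine gap: step (d) is left open, and under the two-sided reading you adopt it cannot be closed. Under the paper's own one-sided reading, however, steps (a) and (b) already prove the lemma.

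\textbf{Step (c).} Your setup is fine: defining $\mathcal E_n$ as the quotient, checking analyticity, and noting that $\exp(\tfrac n2\Van)$ is single-valued. Your diagnosis in (c) is also correct. Apart from this point, the paper's own proof is exactly your (a)+(b), and (c) is precisely where that proof slips. Indeed $n^{-(2n+1)}=e^{-(2n+1)\log n}$, and $-(2+1/n)\log n$ is its contribution to $\frac1n\log$, not to the exponent.

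Your remedy, however, changes the statement rather than proving it, and it makes matters worse. In Lemma~\ref{lem:Ln-subexp} the paper fixes $\exp(o(n))$ to mean the one-sided bound $\sup_K|\cdot|\le e^{\varepsilon n}$ for $n\ge n_0$. Read that way, the lemma follows from (a) and (b) alone, because $|\mathcal E_n|\le e^{O(1)}\,n^{-(2n+1)}\sup_K|\widetilde L_n|$. Neither a shift of $\ellKKT$ nor step (d) is needed.

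The super-exponentially small constant is in fact what makes the statement robust. The fixed-contour Hermite--Walsh estimate honestly gives only $\sup_K|\widetilde L_n|\le e^{Cn}$: the paper's ``bounded above by a constant'' bounds $\frac1n\log|\Omega_n(\zeta)/\Omega_n(\xi)|$, not the logarithm itself. Even so, $e^{Cn}n^{-(2n+1)}\to0$.

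\textbf{Step (d).} Read two-sidedly, (d) cannot be filled, even after your renormalization, because the two-sided bound is false. Your plan also deforms in the wrong direction. In $\widetilde L_n(\zeta)=-\frac{\Omega_n(\zeta)}{2\pi i}\oint_\Gamma\frac{\widetilde f_n(\xi)\,d\xi}{(\xi-\zeta)\,\Omega_n(\xi)}$ one must push $\Gamma$ outward, not shrink it onto $[A,B]$. This picks up the residue $\widetilde f_n(\zeta)$ at $\xi=\zeta$ and leaves a Hankel loop around the cut $(-\infty,0]$ of $\widetilde f_n$. That loop is dominated by the scale $|\xi|\asymp 1/n$ at the branch point $0$, which is exactly where $\zeta(s,n\xi)\sim(n\xi)^{1-s}/(s-1)$ is unavailable. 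Now substitute $\xi=a/n$, use $\Omega_n(0)/\Omega_n(a/n)\to e^{\lambda a}$ with $\lambda:=\int_A^B\kappa(t)\,t^{-1}\,dt$, and invert the Laplace transform $\zeta(s,a)=\int_0^\infty e^{-ax}\frac{x^{s-1}}{\Gamma(s)(1-e^{-x})}\,dx$. One gets, locally uniformly,
\[
\widetilde L_n(\zeta)=\widetilde f_n(\zeta)-\Bigl(\frac{\lambda^{s-1}}{\Gamma(s)\,(1-e^{-\lambda})}+o(1)\Bigr)\,\frac{\Omega_n(\zeta)}{n\,\zeta\,\Omega_n(0)},
\qquad \zeta\in\C\setminus\bigl((-\infty,0]\cup[A,B]\bigr).
\]
For $s=2$ this can be checked by hand from $\zeta(2,a)=\sum_{m\ge0}(m+a)^{-2}$.

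Hence $\frac1n\log|\widetilde L_n(\zeta)|\to\max\{0,u(\zeta)-u(0)\}$ off the level curve $\{u=u(0)\}$, where $u(\zeta):=\int_A^B\log|\zeta-t|\,\kappa(t)\,dt$. Near that curve the two terms are of equal size and zeros of $\widetilde L_n$ accumulate. Three consequences follow.
\begin{itemize}
\item $\widetilde L_n$ is exponentially large at, e.g., $\zeta=2B$. This also shows that Lemma~\ref{lem:Ln-subexp}, your input (b), is false as stated.
\item No lower bound holds near $\{u=u(0)\}$.
\item Your fallback fails: the number of zeros near the curve grows with $n$, and the limit is not $0$ (nor harmonic).
\end{itemize}
So removing $n^{-(2n+1)}$ destroys even the upper half you claim in (c): the renormalization discards precisely the factor that makes the lemma true.
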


\begin{proof}
By Lemma~\ref{lem:Ln-subexp}, $\widetilde L_n(\zeta)=\exp(o(n))$ uniformly on $K$.
Moreover, by Lemma~\ref{lem:Omega-logpot} and the branch convention, on simply connected neighborhoods of $K$
we have
\[
\frac{1}{n}\log \Omega_n(\zeta)
=
\int_A^B \log(\zeta-t)\,\kappa(t)\,dt + O(1/n),
\]
hence

\begin{equation*}
\begin{aligned}
n^{-(2n+1)}\Omega_n(\zeta)^{-1}
&=
\exp\!\left(-2\log n - n\int_A^B\log(\zeta-t)\kappa(t)\,dt + o(n)\right)
\\
&=
\exp\!\left(\frac{n}{2}\,\Van(\zeta)\right)\,\exp(o(n)),
\end{aligned}
\end{equation*}

since $\Van(\zeta)=-2\int_A^B\log(\zeta-t)\kappa(t)\,dt$.
(The factor $n^{-(2n+1)}$ contributes $-(2+1/n)\log n=O(\log n)=o(n)$ to the exponent.)
Combining with \eqref{eq:Wn-scaled-barycentric} gives \eqref{eq:Wn-factorization}.
\end{proof}

\begin{remark}
The factorization \eqref{eq:Wn-factorization} is used only on contours and compacts separated from $[A,B]$,
so the $o(n)$ remainder is uniform on the relevant sets.
\end{remark}

\subsection{The $g$-transformation}
\label{subsec:g-transformation}

Let $\widetilde T$ be the compressed unknown (Section~\ref{subsec:compression-barycentric}), viewed in the
scaled variable $\zeta=z/n$. We perform the standard $g$-transformation (see, e.g., \cite{DeiftBook,KuijlaarsSurvey}):
\begin{equation}
\label{eq:T-to-Sg}
S(\zeta):=
e^{-n\ellKKT\sigma_3/2}\,
\widetilde T(n\zeta)\,
e^{-ng(\zeta)\sigma_3}\,
e^{n\ellKKT\sigma_3/2}.
\end{equation}
This normalizes the dominant exponential growth at infinity and converts the jump on $\Gamma$ into a form
governed by the phase $\ph$.

\subsection{Lens opening and exponentially small jumps on the lips}
\label{subsec:lens-opening}

Let $\Delta=[c,d]$ be the band from Definition~\ref{def:regular-onecut}. Choose endpoint disks
$U_c=B(c,\delta)$, $U_d=B(d,\delta)$, and choose upper/lower lens lips
$\Sigma_{\mathrm{lip}}^{(u)}$, $\Sigma_{\mathrm{lip}}^{(l)}$ as in Section~\ref{subsec:sign-varphi},
connecting $\partial U_c$ to $\partial U_d$ and symmetric with respect to the real axis.

Define the lens domains $\Omega^{+}$ and $\Omega^{-}$ bounded by the lips and $\Delta$, and define the
lens-opening transformation by right multiplication with triangular factors:
\begin{equation}
\label{eq:lens-opening-transformation}
S(\zeta)=
\begin{cases}
R(\zeta)\,J_{\mathrm{lip}}^{(u)}(\zeta), & \zeta\in \Omega^{+},\\
R(\zeta)\,\bigl(J_{\mathrm{lip}}^{(l)}(\zeta)\bigr)^{-1}, & \zeta\in \Omega^{-},\\
R(\zeta), & \zeta \notin \Omega^{+}\cup\Omega^{-},
\end{cases}
\end{equation}
with

\begin{equation}
\label{eq:Jlip}
J_{\mathrm{lip}}^{(u)}(\zeta)=
\begin{pmatrix}
1 & -e^{-n\ph(\zeta)}\\
0 & 1
\end{pmatrix},
\qquad
J_{\mathrm{lip}}^{(l)}(\zeta)=
\begin{pmatrix}
1 & e^{-n\ph(\zeta)}\\
0 & 1
\end{pmatrix}.
\end{equation}

We work under the upper-triangular convention fixed in Remark~\ref{rem:triangular-corner}

\begin{lemma}[Exponentially small jumps on the lips]
\label{lem:lip-small}
There exists $c_0>0$ such that, uniformly on
$\Sigma_{\mathrm{lip}}^{(u)}\cup\Sigma_{\mathrm{lip}}^{(l)}\setminus(U_c\cup U_d)$,
\begin{equation}
\label{eq:lip-small}
J_R(\zeta)=I+O\!\left(e^{-c_0 n}\right),
\qquad n\to\infty,
\end{equation}
where $J_R$ denotes the jump matrix for $R$ on the lens lips.
\end{lemma}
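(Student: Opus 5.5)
The plan is to read off $J_R$ on the lips directly from the lens-opening transformation \eqref{eq:lens-opening-transformation}, and then to bound the single nontrivial entry using Lemma~\ref{lem:sign-varphi}. Outside $\overline{\Omega^+\cup\Omega^-}$ we have $R=S$, and $S$ has no jump on the lips: they lie in the region where $S$ is analytic, since they sit inside a $\delta$-neighborhood of $\Delta$ and the compressed jump lives on $\Gamma$, away from $[A,B]$. On the upper lip, orient the lip from $c$ to $d$ so that $\Omega^+$ lies on the minus side and the exterior on the plus side. Then
\[
R_+ = S,\qquad R_- = S\,\bigl(J_{\mathrm{lip}}^{(u)}\bigr)^{-1},
\]
so $J_R=R_-^{-1}R_+=J_{\mathrm{lip}}^{(u)}$. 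This is the upper-triangular matrix with off-diagonal entry $-e^{-n\ph}$. The lower lip is treated the same way with the appropriate orientation and gives $J_R=\bigl(J_{\mathrm{lip}}^{(l)}\bigr)^{\pm1}$. In both cases
\[
J_R-I=\begin{pmatrix}0&\mp e^{-n\ph(\zeta)}\\0&0\end{pmatrix}.
\]
If the orientation convention differs, the only change is an inverse, which flips the sign of the off-diagonal entry. The estimate below is insensitive to that sign.

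The quantitative step is then immediate: $|e^{-n\ph(\zeta)}|=e^{-n\Re\ph(\zeta)}$. By Lemma~\ref{lem:sign-varphi} we have $\Re\ph\ge c_0>0$ on $\Sigma_{\mathrm{lip}}^{(u)}\cup\Sigma_{\mathrm{lip}}^{(l)}\setminus(U_c\cup U_d)$, provided the lips are chosen inside the fixed $\delta$-neighborhood of $\Delta$ required there. Hence $\|J_R-I\|\le e^{-c_0 n}$ uniformly on that set, which is \eqref{eq:lip-small}. This uniform sup bound on compact lips of bounded length also gives the $L^2\cap L^\infty$ smallness used later in the small-norm step.

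The main obstacle is consistency rather than estimation. Two things must be checked.

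First, the triangular factor $J_{\mathrm{lip}}^{(u)}$ must actually be the one that factors the jump of $S$ on $\Delta$. Concretely, $e^{-n\ph}$ must be the analytic continuation of the band jump entry produced after the $g$-transformation, after combining $\widetilde W_n=e^{n\Van/2}\mathcal E_n$ from Lemma~\ref{lem:En-def} with $e^{\pm ng}$. This requires that $\ph$, defined by \eqref{eq:phase-def}, extends analytically to a neighborhood of $(c,d)$ minus the endpoints. I would verify this first, using the real-analyticity of $\kappa$ so that $\Van$ continues across $(c,d)$, and the relation $\ph_+ + \ph_-$ derived from \eqref{eq:g-plus-minus}.

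Second, the subexponential factor $\mathcal E_n=\exp(o(n))$ may also appear in the lip entry if it is not absorbed into $e^{-n\ph}$. In that case the bound becomes $e^{-c_0 n+o(n)}\le e^{-c_0 n/2}$ for large $n$, and the lemma holds after replacing $c_0$ by $c_0/2$. I would state the result in that form to be safe.
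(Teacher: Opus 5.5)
Your reduction is the paper's proof. On the lips, $J_R$ is the lens factor \eqref{eq:Jlip} (or its inverse, depending on orientation), its only nontrivial entry is $\mp e^{-n\ph}$, and the bound $|e^{-n\ph}|=e^{-n\Re\ph}\le e^{-c_0n}$ is taken from Lemma~\ref{lem:sign-varphi}. Your two extra worries are not needed here, because \eqref{eq:Jlip} defines the lip entries as exactly $\mp e^{-n\ph}$, with no $\mathcal E_n$ and no reference to a factorization of a band jump. As a deduction from Lemma~\ref{lem:sign-varphi} your argument is fine. The genuine gap, which you share with the paper, is that this lemma is the only substantive input and it is false for $\ph$ as defined in \eqref{eq:phase-def}. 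So the step ``$\Re\ph\ge c_0$ on the lips'' cannot be justified.

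To see this, note that $\Re\log(\zeta-t)=\log|\zeta-t|$ for any branch. Hence for $\zeta\notin[A,B]$
\[
\Re\ph(\zeta)=-2\int_A^B\log|\zeta-t|\,d\mu_*(t)-2\int_A^B\log|\zeta-t|\,\kappa(t)\,dt-\ellKKT,
\]
which extends continuously to $\Delta$ and vanishes there by \eqref{eq:KKT-band}. For $x\in\Delta$ and $y\neq0$ one has $|x+iy-t|>|x-t|$ for every real $t$, so $\Re\ph(x+iy)<\Re\ph(x)=0$. Thus $\Re\ph$ is strictly \emph{negative} on every vertical line through the band. Since $U_c\cap U_d=\varnothing$ forces $\delta\le(d-c)/2$, every upper lip joining $\partial U_c$ to $\partial U_d$ passes through a point $\zeta_0=\tfrac{c+d}{2}+iy_0$ with $y_0>0$, outside $U_c\cup U_d$. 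There $|e^{-n\ph(\zeta_0)}|=e^{n|\Re\ph(\zeta_0)|}$ grows exponentially. So the conclusion fails for every admissible choice of lips, and the maximum-principle argument given for Lemma~\ref{lem:sign-varphi} never addresses this sign. Your first consistency check is where this would have surfaced. A lip factor is exponentially small only if it continues a genuine factorization of the band jump of $S$, with the decay coming from $\mu_*'>0$ via Cauchy--Riemann. By contrast, $e^{-n\ph}$ with this $\ph$ grows inside the lens.
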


\begin{proof}
By Lemma~\ref{lem:sign-varphi}, $\Re\ph(\zeta)\ge c_0$ on the lips away from the endpoint disks.
Hence $e^{-n\ph(\zeta)}=O(e^{-c_0 n})$ uniformly there. Substituting into \eqref{eq:Jlip} yields
\eqref{eq:lip-small}.
\end{proof}

\section{Parametrices: outer model and Airy neighborhoods}
\label{sec:parametrices}

In this section we construct the model solution obtained after the $g$-transformation and lens opening:
the outer parametrix $N$ solving the reduced constant-jump problem on the band $\Delta=[c,d]$, and the local
Airy parametrices $P^{(c)}$ and $P^{(d)}$ in neighborhoods of the soft edges $c$ and $d$.

\subsection{Outer parametrix $N$ and its jump}
\label{subsec:outer}

Let
\begin{equation}
\label{eq:a-def}
a(\zeta):=\left(\frac{\zeta-d}{\zeta-c}\right)^{1/4},
\qquad \zeta\in\C\setminus[c,d],
\end{equation}
where the branch is chosen so that $a(\zeta)\to 1$ as $\zeta\to\infty$.
Then $a_+(\zeta)= i\,a_-(\zeta)$ for $\zeta\in(c,d)$.

Define the outer parametrix by
\begin{equation}
\label{eq:outer-N}
N(\zeta):=
\frac{1}{2}
\begin{pmatrix}
a(\zeta)+a(\zeta)^{-1} & \dfrac{a(\zeta)-a(\zeta)^{-1}}{i}\\[0.6em]
-\dfrac{a(\zeta)-a(\zeta)^{-1}}{i} & a(\zeta)+a(\zeta)^{-1}
\end{pmatrix}.
\end{equation}

\begin{lemma}[Properties of $N$]
\label{lem:N-properties}
The matrix $N$ is analytic in $\C\setminus[c,d]$, has the jump relation
\begin{equation}
\label{eq:N-jump}
N_+(\zeta)=N_-(\zeta)\begin{pmatrix}0&1\\-1&0\end{pmatrix},
\qquad \zeta\in(c,d),
\end{equation}
and satisfies
\begin{equation}
\label{eq:N-infty}
N(\zeta)=I+O(\zeta^{-1}),\qquad \zeta\to\infty.
\end{equation}
\end{lemma}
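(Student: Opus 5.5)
The plan is to verify the three assertions of Lemma~\ref{lem:N-properties} directly from the explicit formula \eqref{eq:outer-N}, after first fixing the branch of $a$ precisely. The guiding observation is that
\[
N=\tfrac12\bigl(a+a^{-1}\bigr)I+\tfrac{1}{2i}\bigl(a-a^{-1}\bigr)\begin{pmatrix}0&1\\-1&0\end{pmatrix}.
\]
So every property of $N$ reduces to properties of the scalar function $a$ and the single relation $a_+=i\,a_-$ on $(c,d)$.

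\textbf{Step 1 (the branch of $a$).} The Möbius map $\zeta\mapsto(\zeta-d)/(\zeta-c)$ sends $\C\setminus[c,d]$ bijectively onto $\C\setminus(-\infty,0]$. It sends $\infty$ to $1$, sends $[c,d]$ to $(-\infty,0]$, and sends $(c,d)$ to $(-\infty,0)$. I will therefore define $a$ as the principal fourth root of this ratio. Then $a$ is analytic and nonvanishing on $\C\setminus[c,d]$, and $a(\zeta)\to1$ as $\zeta\to\infty$. Consequently $a^{-1}$ and all entries of $N$ are analytic there, which gives the first assertion.

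\textbf{Step 2 (the relation $a_+=i\,a_-$).} For $x\in(c,d)$, approached from above, $\arg(\zeta-d)\to\pi$ while $\arg(\zeta-c)\to0$. Approached from below, $\arg(\zeta-d)\to-\pi$. The ratio thus has boundary arguments $\pm\pi$ and common modulus $(d-x)/(x-c)$. Taking principal fourth roots gives
\[
a_\pm(x)=\Bigl(\tfrac{d-x}{x-c}\Bigr)^{1/4}e^{\pm i\pi/4},
\]
and hence $a_+=i\,a_-$. This is the only point where a sign could plausibly go wrong, so I expect it to be the main obstacle, mild as it is. I will double-check it against the orientation of $(c,d)$ from left to right, with the $+$ side on the left, that is, above the cut.

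\textbf{Step 3 (the jump).} Write $p=\frac12(a+a^{-1})$ and $q=\frac1{2i}(a-a^{-1})$, so that
\[
N=\begin{pmatrix}p&q\\-q&p\end{pmatrix}.
\]
From $a_+=ia_-$ and $a_+^{-1}=-i\,a_-^{-1}$ one gets $p_+=\tfrac{i}{2}(a_--a_-^{-1})=-q_-$ and $q_+=\tfrac1{2i}\,i(a_-+a_-^{-1})=p_-$. On the other hand, right multiplication by $\begin{psmallmatrix}0&1\\-1&0\end{psmallmatrix}$ maps a row $(u,v)$ to $(-v,u)$. Hence
\[
N_-\begin{pmatrix}0&1\\-1&0\end{pmatrix}=\begin{pmatrix}-q_-&p_-\\-p_-&-q_-\end{pmatrix}=\begin{pmatrix}p_+&q_+\\-q_+&p_+\end{pmatrix}=N_+,
\]
which is \eqref{eq:N-jump}.

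\textbf{Step 4 (behaviour at infinity and at the endpoints).} Since $(\zeta-d)/(\zeta-c)=1-(d-c)/\zeta+O(\zeta^{-2})$, we get $a=1-\frac{d-c}{4\zeta}+O(\zeta^{-2})$. Therefore $p=1+O(\zeta^{-2})$ and $q=O(\zeta^{-1})$, which gives \eqref{eq:N-infty}.

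I will also record two facts needed later. First, $\det N=p^2+q^2=\tfrac14\bigl[(a+a^{-1})^2-(a-a^{-1})^2\bigr]=1$. Second, the entries of $N$ are $O(|\zeta-c|^{-1/4})$ and $O(|\zeta-d|^{-1/4})$ near the endpoints. The first fact makes $N$ invertible with $N^{-1}$ bounded away from $c$ and $d$, which is what the matching in Proposition~\ref{prop:matching} uses. The second fact gives uniqueness of $N$: if $\widetilde N$ is another solution, then $\widetilde N N^{-1}$ has no jump, has at most $|\zeta-c|^{-1/2}$ and $|\zeta-d|^{-1/2}$ singularities, which are removable, and tends to $I$ at infinity. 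Liouville's theorem then gives $\widetilde N N^{-1}\equiv I$.
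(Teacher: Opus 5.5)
Your proposal is correct and follows the same route as the paper, which only asserts that analyticity, the jump and the normalization at infinity follow from the explicit formula and the relation $a_+=i\,a_-$; you supply the omitted details, namely the derivation of $a_+=i\,a_-$ from the principal fourth root of the M\"obius ratio and the entrywise check $p_+=-q_-$, $q_+=p_-$. The one harmless imprecision is in Step 1: the M\"obius map sends $\C\setminus[c,d]$ onto $\C\setminus\bigl((-\infty,0]\cup\{1\}\bigr)$, and it is a bijection onto $\C\setminus(-\infty,0]$ only once $\infty$ is included in the domain; your determinant, endpoint and uniqueness remarks go beyond what the lemma claims.
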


\begin{proof}
Analyticity and \eqref{eq:N-infty} follow directly from \eqref{eq:a-def}--\eqref{eq:outer-N}.
Using $a_+=ia_-$ on $(c,d)$, one checks that the boundary values satisfy \eqref{eq:N-jump}.
\end{proof}

\subsection{Airy parametrix and endpoint conformal maps}
\label{subsec:airy-parametrices}

Fix $\delta>0$ and let $U_c=B(c,\delta)$, $U_d=B(d,\delta)$ be the endpoint disks, chosen small enough so that
$U_c\cap U_d=\varnothing$ and $U_c\cup U_d$ does not intersect the contour $\Gamma$.

\paragraph{Local Airy model.}
Let $\wA:=e^{2\pi i/3}$ and define the standard Airy matrix
\begin{equation}
\label{eq:Airy-matrix}
A(\xi):=
\sqrt{2\pi}\,e^{-\pi i/12}
\begin{cases}
\begin{pmatrix}
\Ai(\xi) & \Ai(\wA^2\xi)\\
\Ai'(\xi) & \wA^2\,\Ai'(\wA^2\xi)
\end{pmatrix}e^{-\xi^{3/2}\sigma_3/2}, & 0<\arg\xi<\frac{2\pi}{3},\\[1.0em]
\begin{pmatrix}
\Ai(\xi) & -\wA\,\Ai(\wA\xi)\\
\Ai'(\xi) & -\wA^2\,\Ai'(\wA\xi)
\end{pmatrix}e^{-\xi^{3/2}\sigma_3/2}, & \frac{2\pi}{3}<\arg\xi<\pi,\\[1.0em]
\begin{pmatrix}
\Ai(\xi) & -\wA^2\,\Ai(\wA^2\xi)\\
\Ai'(\xi) & -\wA\,\Ai'(\wA^2\xi)
\end{pmatrix}e^{-\xi^{3/2}\sigma_3/2}, & -\pi<\arg\xi<-\frac{2\pi}{3},\\[1.0em]
\begin{pmatrix}
\Ai(\xi) & \wA\,\Ai(\wA\xi)\\
\Ai'(\xi) & \wA^2\,\Ai'(\wA\xi)
\end{pmatrix}e^{-\xi^{3/2}\sigma_3/2}, & -\frac{2\pi}{3}<\arg\xi<0,
\end{cases}
\end{equation}
where $\xi^{3/2}$ uses the principal branch with a cut along $(-\infty,0]$.

\begin{lemma}[Airy asymptotics]
\label{lem:airy-asympt}
As $\xi\to\infty$ with $|\arg\xi|<\pi$, the Airy matrix satisfies
\begin{equation}
\label{eq:airy-asympt}
A(\xi)=\left(I+O(\xi^{-3/2})\right)\,\xi^{-\sigma_3/4}\,
\frac{1}{\sqrt{2}}
\begin{pmatrix}
1&i\\
i&1
\end{pmatrix}
e^{-\xi^{3/2}\sigma_3/2}.
\end{equation}
\end{lemma}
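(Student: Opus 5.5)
The plan is to reduce the claim to the classical large-$\xi$ expansions of $\Ai$ and $\Ai'$, applied sector by sector in \eqref{eq:Airy-matrix}. Since the factor $e^{-\xi^{3/2}\sigma_3/2}$ appears on the right both in the definition \eqref{eq:Airy-matrix} and in \eqref{eq:airy-asympt}, I first strip it off. Write $A(\xi)=\sqrt{2\pi}\,e^{-\pi i/12}B(\xi)e^{-\xi^{3/2}\sigma_3/2}$, where $B$ is the piecewise matrix of Airy values. The lemma is then equivalent to an asymptotic statement for $\sqrt{2\pi}\,e^{-\pi i/12}B(\xi)$, with the same left error factor and the same matrix $\xi^{-\sigma_3/4}\frac{1}{\sqrt2}\begin{psmallmatrix}1&i\\ i&1\end{psmallmatrix}$.

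\textbf{Step 1 (columnwise expansions).} I will use, for $|\arg\xi|<\pi$,
\[
\Ai(\xi)=\tfrac{\xi^{-1/4}}{2\sqrt\pi}e^{-\frac23\xi^{3/2}}\bigl(1+O(\xi^{-3/2})\bigr),\qquad
\Ai'(\xi)=-\tfrac{\xi^{1/4}}{2\sqrt\pi}e^{-\frac23\xi^{3/2}}\bigl(1+O(\xi^{-3/2})\bigr).
\]
These give the first column of $B$ directly. For the second column, in each of the four sectors I substitute $\wA\xi$ or $\wA^2\xi$, choosing the branch so that the rotated argument stays in $|\arg|<\pi$. Writing $(\wA^{\pm1}\xi)^{3/2}=-\xi^{3/2}$ and $(\wA^{\pm1}\xi)^{\pm1/4}=e^{\pm\pi i/6}\xi^{\pm1/4}$ (with the appropriate signs), the second column has the form $\xi^{\mp1/4}\times(\text{constant})\times e^{+\frac23\xi^{3/2}}(1+O(\xi^{-3/2}))$. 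The prefactors $\wA^{k}$ in \eqref{eq:Airy-matrix} are exactly what make these constants agree across all four sectors. Combined with $\sqrt{2\pi}e^{-\pi i/12}/(2\sqrt\pi)=e^{-\pi i/12}/\sqrt2$, this should produce the constant matrix $\frac1{\sqrt2}\begin{psmallmatrix}1&i\\ i&1\end{psmallmatrix}$. I will verify the constants in the sector $0<\arg\xi<2\pi/3$ and obtain the others from the jump relations of $A$, which preserve the leading term.

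\textbf{Step 2 (the exponential).} Step 1 yields the intrinsic factor $e^{-\frac23\xi^{3/2}\sigma_3}$ in addition to the stripped factor. To reach \eqref{eq:airy-asympt} as worded, I must check that after reinstating $e^{-\xi^{3/2}\sigma_3/2}$ the total exponent matches the one displayed. I expect this to work only if $\xi^{3/2}$ in \eqref{eq:Airy-matrix} is read with the normalization fixed there. I will carry out this bookkeeping explicitly and not absorb any discrepancy silently.

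\textbf{Step 3 (moving the error to the left; main obstacle).} Step 1 gives the error as a right diagonal factor $D=I+O(\xi^{-3/2})$. Commuting it to the left produces $\xi^{-\sigma_3/4}MDM^{-1}\xi^{\sigma_3/4}$, and the off-diagonal entries of this conjugate pick up $\xi^{\pm1/2}$. A crude estimate therefore gives only $O(\xi^{-1})$. To obtain the stated $O(\xi^{-3/2})$, I will use the next terms of the expansions, $u_1=5/72$ for $\Ai$ and $v_1=-7/72$ for $\Ai'$, and exploit the unit determinant $\det B=\text{const}$ (from the Wronskian). The aim is to show that the $\xi^{-1}$ contribution in the $(1,2)$ entry cancels, with the $\xi^{-2}$ contribution in the $(2,1)$ entry controlled in the same way. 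This cancellation is the step I expect to be hardest, and I will check it first in a single sector.
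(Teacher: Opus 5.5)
\paragraph{Verdict.} There is a genuine gap, and it cannot be closed: with $A$ defined as in \eqref{eq:Airy-matrix}, the statement is false. The paper asserts it without proof. You were right to be uneasy about Steps~2 and~3, but the problem is not bookkeeping.

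\paragraph{Steps 1--2 cannot be completed.} In the sector $0<\arg\xi<2\pi/3$, the expansions you quote give the first column of $A$ as $\frac{e^{-\pi i/12}}{\sqrt2}\,\xi^{-\sigma_3/4}\begin{psmallmatrix}1+O(\xi^{-3/2})\\ -1+O(\xi^{-3/2})\end{psmallmatrix}e^{-\frac76\xi^{3/2}}$. This is because the intrinsic factor $e^{-\frac23\xi^{3/2}}$ and the factor $e^{-\frac12\xi^{3/2}}$ built into the definition add.

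The right side of \eqref{eq:airy-asympt} predicts $\xi^{-\sigma_3/4}\frac1{\sqrt2}\begin{psmallmatrix}1\\ i\end{psmallmatrix}e^{-\frac12\xi^{3/2}}$, up to a left factor $I+O(\xi^{-3/2})$. That factor changes the $(1,1)$ entry only by a relative $O(\xi^{-1})$. The leftover ratio $e^{-\pi i/12}e^{-\frac23\xi^{3/2}}$ tends to $0$ or $\infty$ off the ray $\arg\xi=\pi/3$ and oscillates on it. So no reading of the branch of $\xi^{3/2}$ rescues Step~2.

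Even with the exponents repaired, the constant vector is $e^{-\pi i/12}(1,-1)^{T}/\sqrt2$, not $(1,i)^{T}/\sqrt2$. A left factor $I+O(\xi^{-3/2})$, once pushed through $\xi^{-\sigma_3/4}$, becomes $I+O(\xi^{-1})$, so it can only reconcile constant matrices that are equal.

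Your claim in Step~1 that the $\wA^k$ prefactors make the sectors agree is also false. The second-column constant is $e^{\pi i/12}(1,1)^{T}/\sqrt2$ for $0<\arg\xi<2\pi/3$, but $e^{5\pi i/12}(1,1)^{T}/\sqrt2$ for $-2\pi/3<\arg\xi<0$. The determinant gives a one-line certificate. By $\mathcal W\{\Ai(z),\Ai(ze^{\mp 2\pi i/3})\}=e^{\pm\pi i/6}/(2\pi)$, one has $\det A\equiv 1$ in the first sector of \eqref{eq:Airy-matrix}, but $\det A\equiv e^{-2\pi i/3},\,e^{\pi i/3},\,e^{\pi i/3}$ in the second, third and fourth sectors. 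The right side of \eqref{eq:airy-asympt} has determinant $1+O(\xi^{-3/2})$. For the same reason you cannot transfer the constants from one sector to the others via the jumps of this $A$: they are not unimodular.

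\paragraph{Step 3 fails even for a correctly normalized parametrix.} Take a parametrix with a constant right factor, first column $\sqrt{2\pi}\,(\Ai(\xi),-i\Ai'(\xi))^{T}$ for $0<\arg\xi<2\pi/3$, and lower-triangular modifications of the first column for $2\pi/3<|\arg\xi|<\pi$. Its leading constant really is $M=\frac1{\sqrt2}\begin{psmallmatrix}1&i\\ i&1\end{psmallmatrix}$. With $\eta=\frac23\xi^{3/2}$, the columnwise expansions give $\xi^{-\sigma_3/4}\bigl(M+\eta^{-1}M_1+O(\eta^{-2})\bigr)e^{-\eta\sigma_3}$, where $M_1=\frac1{\sqrt2}\begin{psmallmatrix}-u_1&iu_1\\ -iv_1&v_1\end{psmallmatrix}$. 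Hence $M_1M^{-1}=\begin{psmallmatrix}0&iu_1\\ -iv_1&0\end{psmallmatrix}$.

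Moving the error to the left multiplies the $(2,1)$ entry by $\xi^{1/2}$ (not the $(1,2)$ entry, as you wrote). So the left error has $(2,1)$ entry $-\tfrac32 i v_1\,\xi^{-1}=\tfrac{7i}{48}\,\xi^{-1}\neq 0$. The constant Wronskian only forces $\operatorname{tr}(M^{-1}M_1)=0$, which is a condition on the diagonal. It cannot produce the off-diagonal cancellation you are looking for.

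\paragraph{What is actually true.} With the error on the left, the sharp bound is $O(\xi^{-1})$. The rate $O(\xi^{-3/2})$ holds only in the middle position,
$A(\xi)=\xi^{-\sigma_3/4}M\bigl(I+O(\xi^{-3/2})\bigr)e^{-\frac23\xi^{3/2}\sigma_3}$. That corrected statement, for the corrected definition, is what your columnwise argument proves when checked sector by sector. It is also the form a matching argument needs: the prefactor $E^{(c)}$ contains $(n^{2/3}\xi_c)^{\sigma_3/4}$, which undoes the conjugation by $\xi^{-\sigma_3/4}$.
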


\paragraph{Conformal coordinates at the endpoints.}
Let $\ph$ be the phase from Section~\ref{sec:g-function}. There exist conformal maps
$\xi_c:U_c\to\C$ and $\xi_d:U_d\to\C$ such that
\begin{equation}
\label{eq:xi-cd}
\xi_c(\zeta)=\left(\frac{3}{2}\,\ph(\zeta)\right)^{2/3},\qquad
\xi_d(\zeta)=\left(\frac{3}{2}\,\ph(\zeta)\right)^{2/3},
\end{equation}
with branch choices fixed by $\xi_c(c)=0$, $\xi_c'(c)>0$ and $\xi_d(d)=0$, $\xi_d'(d)>0$.
These maps send the local lens geometry to the standard Airy rays.

\subsection{Matching and explicit prefactors}
\label{subsec:matching-explicit}

Define the local parametrices in $U_c$ and $U_d$ by
\begin{equation}
\label{eq:Pc-def}
P^{(c)}(\zeta):=E^{(c)}(\zeta)\,A\!\left(n^{2/3}\xi_c(\zeta)\right)\,
e^{n\ph(\zeta)\sigma_3/2},
\qquad \zeta\in U_c\setminus\Sigma_R,
\end{equation}
and
\begin{equation}
\label{eq:Pd-def}
P^{(d)}(\zeta):=E^{(d)}(\zeta)\,A\!\left(n^{2/3}\xi_d(\zeta)\right)\,
e^{n\ph(\zeta)\sigma_3/2},
\qquad \zeta\in U_d\setminus\Sigma_R,
\end{equation}
where the analytic prefactors $E^{(c)}$, $E^{(d)}$ are chosen so that $P^{(c)}$ and $P^{(d)}$ match $N$
on the boundary circles.

\begin{proposition}[Explicit matching]
\label{prop:matching}
There exist analytic invertible matrices $E^{(c)}$ on $U_c$ and $E^{(d)}$ on $U_d$ such that

The matching is uniform on the circle boundaries: the error term is understood uniformly for
$\zeta\in \partial U_c\cup \partial U_d$.
Here the disks $U_c,U_d$ are chosen with $n$-independent radii in the $\zeta$-plane, so that the matching estimates
hold on fixed contours as $n\to\infty$.

\begin{equation}
\label{eq:matching-explicit}
P^{(c)}(\zeta)\,N(\zeta)^{-1}=I+O\!\left(\frac{1}{n}\right),
\qquad \zeta\in\partial U_c,
\end{equation}
and
\begin{equation}
\label{eq:matching-explicit-d}
P^{(d)}(\zeta)\,N(\zeta)^{-1}=I+O\!\left(\frac{1}{n}\right),
\qquad \zeta\in\partial U_d,
\end{equation}
uniformly as $n\to\infty$.
One may take
\begin{align}
\label{eq:Ec-def}
E^{(c)}(\zeta)
&:=
N(\zeta)\,
\frac{1}{\sqrt{2}}
\begin{pmatrix}
1&-i\\
-i&1
\end{pmatrix}\,
\left(n^{2/3}\xi_c(\zeta)\right)^{\sigma_3/4},\\[0.4em]
\label{eq:Ed-def}
E^{(d)}(\zeta)
&:=
N(\zeta)\,
\frac{1}{\sqrt{2}}
\begin{pmatrix}
1&-i\\
-i&1
\end{pmatrix}\,
\left(n^{2/3}\xi_d(\zeta)\right)^{\sigma_3/4},
\end{align}
with branches consistent with \eqref{eq:airy-asympt} and \eqref{eq:xi-cd}.
\end{proposition}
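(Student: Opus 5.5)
We prove the matching statement, Proposition~\ref{prop:matching}, in three steps: show that $E^{(c)}$ is analytic and invertible, expand $P^{(c)}N^{-1}$ using Lemma~\ref{lem:airy-asympt}, and then repeat the argument at $d$. The argument follows the standard Airy matching template.

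\textbf{Step 1: analyticity and invertibility of $E^{(c)}$.} We check that $E^{(c)}$ in \eqref{eq:Ec-def} has no jump in $U_c$ and no singularity at $c$.
\begin{itemize}[leftmargin=2.2em]
\item \emph{No jump on $(c,d)\cap U_c$.} By Lemma~\ref{lem:N-properties}, $N_+=N_-\begin{psmallmatrix}0&1\\-1&0\end{psmallmatrix}$ there. The factor $\xi_c^{\sigma_3/4}$ changes by $e^{\pm\pi i\sigma_3/4}$ across the same segment, because the band is mapped by $\xi_c$ to a ray on which $\xi^{1/4}$ has its cut. A direct $2\times2$ computation shows that conjugating $\begin{psmallmatrix}0&1\\-1&0\end{psmallmatrix}$ by $\frac1{\sqrt2}\begin{psmallmatrix}1&-i\\-i&1\end{psmallmatrix}$ and $e^{\pm\pi i\sigma_3/4}$ produces the identity. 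Hence $E^{(c)}_+=E^{(c)}_-$.
\item \emph{No jump elsewhere in $U_c$.} $N$ is analytic off $[c,d]$, and $\xi_c$ is conformal, so no other jump appears.
\item \emph{Removable singularity at $c$.} $N$ grows at most like $|\zeta-c|^{-1/4}$ by \eqref{eq:a-def}, and $\xi_c^{1/4}$ is of the same order. So $E^{(c)}$ is $O(|\zeta-c|^{-1/2})$ at worst. It is single-valued, and the explicit cancellation of the branch factors $a^{\pm1}$ against $\xi_c^{\pm1/4}$ shows the singularity is removable.
\item \emph{Invertibility.} $\det N\equiv1$, which follows from $(a+a^{-1})^2-(a-a^{-1})^2=4$. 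The other two factors have determinant $1$. Hence $\det E^{(c)}\equiv1$.
\end{itemize}

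\textbf{Step 2: the matching estimate on $\partial U_c$.} On the circle $\partial U_c$, with radius $\delta$ fixed, $|\xi_c|\ge c'>0$. Hence $|n^{2/3}\xi_c|\gtrsim n^{2/3}\to\infty$, and Lemma~\ref{lem:airy-asympt} applies uniformly. Substituting \eqref{eq:airy-asympt} into \eqref{eq:Pc-def} gives
\[
P^{(c)}=E^{(c)}\bigl(I+O(n^{-1}\xi_c^{-3/2})\bigr)(n^{2/3}\xi_c)^{-\sigma_3/4}\tfrac1{\sqrt2}\begin{pmatrix}1&i\\ i&1\end{pmatrix}e^{-\frac n2\xi_c^{3/2}\sigma_3}e^{\frac n2\ph\sigma_3}.
\]
By \eqref{eq:xi-cd}, $\tfrac23\xi_c^{3/2}=\ph$, so the two exponentials cancel. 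This requires the branches of $\xi_c^{3/2}$ and $\ph$ to agree in each sector, which we fix by the choice of lens lips. Next we commute the $O(n^{-1})$ error to the left through the bounded factor $(n^{2/3}\xi_c)^{-\sigma_3/4}$. Its conjugation can amplify off-diagonal entries by at most $n^{1/3}$, so we estimate
\[
E^{(c)}\bigl(I+O(n^{-1})\bigr)E^{(c)-1}.
\]
Since $E^{(c)}$ contains $n^{\pm1/6}$ factors, $E^{(c)}=O(n^{1/6})$ and $E^{(c)-1}=O(n^{1/6})$. The raw product is therefore only $I+O(n^{-2/3})$.

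To obtain $O(1/n)$, we use the standard refinement. The $\xi^{-3/2}$ correction in Lemma~\ref{lem:airy-asympt} has the explicit structure $\xi^{-\sigma_3/4}(I+\sum_k c_k\xi^{-3k/2})\xi^{\sigma_3/4}$, with $c_1$ carrying the known Airy constants. With this form, the conjugation by $(n^{2/3}\xi_c)^{\pm\sigma_3/4}$ is exactly absorbed, and the remainder is $O((n^{2/3})^{-3/2})=O(n^{-1})$. Using $(\frac1{\sqrt2}\begin{psmallmatrix}1&-i\\-i&1\end{psmallmatrix})(\frac1{\sqrt2}\begin{psmallmatrix}1&i\\ i&1\end{psmallmatrix})=I$, the leading terms reduce to $E^{(c)}\cdot(\text{leading part})=N$. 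This gives $P^{(c)}N^{-1}=I+O(1/n)$ uniformly on $\partial U_c$, since $N$ and $N^{-1}$ are bounded there.

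\textbf{Step 3: the endpoint $d$.} The argument is identical. The only difference is the orientation: at $d$ the band lies to the left, so the roles of the Airy sectors are mirrored. If necessary we replace $\xi_d$ by $-\xi_d$ and conjugate by $\sigma_3$ to restore the standard orientation, and adjust the constant matrix in \eqref{eq:Ed-def} correspondingly. The estimate \eqref{eq:matching-explicit-d} then follows exactly as in Step 2.

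\textbf{Main obstacle.} The hard part is the consistency of branches. We must verify that $\xi_c$ maps the lens lips and the band onto the rays $\arg\xi=\pm2\pi/3$ and $\pi$, so that the jumps of $P^{(c)}$ coincide with those of $S$. We must also verify that $(\tfrac32\ph)^{2/3}$ is genuinely conformal at $c$, i.e.\ that $\ph(\zeta)\sim C(\zeta-c)^{3/2}$. The second point is exactly the soft-edge condition (R3). Checking the sign $\xi_c'(c)>0$ against the sign of $\Re\ph$ from Lemma~\ref{lem:phase-real} is where the argument needs the most care. We would do this first, by expanding $\ph$ near $c$ using the square-root vanishing of $\mu_*'$.
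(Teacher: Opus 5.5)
Your Step~2 is the paper's argument: insert Lemma~\ref{lem:airy-asympt} into \eqref{eq:Pc-def}, let $E^{(c)}$ absorb $(n^{2/3}\xi_c)^{\pm\sigma_3/4}$ and the constant matrices, and use that $N^{\pm1}$ is bounded on the fixed circle. Your observation that a crude bound on $E^{(c)}(I+O(n^{-1}))(E^{(c)})^{-1}$ only gives $O(n^{-2/3})$ is correct, and the paper's one-line proof passes over it. The refinement you invoke is right, but it comes from the Airy functions themselves, not from Lemma~\ref{lem:airy-asympt} as written. The correction belongs to the right of $\xi^{-\sigma_3/4}\frac1{\sqrt2}\begin{psmallmatrix}1&i\\ i&1\end{psmallmatrix}$. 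Moved to the left it becomes $\xi^{-\sigma_3/4}O(\xi^{-3/2})\xi^{\sigma_3/4}$, which in general is only $O(\xi^{-1})$ entrywise. So you should quote the standard form rather than the lemma.

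The exponential cancellation you assert also does not follow. From $\tfrac23\xi_c^{3/2}=\varphi$ you get $e^{-\frac n2\xi_c^{3/2}\sigma_3}e^{\frac n2\varphi\sigma_3}=e^{-\frac n4\varphi\sigma_3}$, which is exponentially large wherever $\Re\varphi\neq0$ on $\partial U_c$. This inconsistency is already in the paper's constants and its proof. Still, the normalization has to be fixed before the matching can hold, e.g.\ Airy exponent $\tfrac23\xi^{3/2}$ together with $\xi_c=(\tfrac34\varphi)^{2/3}$.

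The genuine gap is Step~1 at $c$, and Step~3 puts the fix at the wrong endpoint.

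\emph{The sign of $\xi_c'(c)$.} With the stated normalization $\xi_c'(c)>0$, the band $(c,c+\delta)$, which lies to the right of $c$, is mapped into $(0,\infty)$, where $\xi^{1/4}$ has no cut. So $E^{(c)}$ from \eqref{eq:Ec-def} keeps the jump of $N$ on the band and acquires a new one to the left of $c$. The check you deferred settles the sign. By Lemma~\ref{lem:phase-real}, $\Re\varphi=0$ on the band, so $\xi_c^{3/2}$ is purely imaginary there. With $\xi_c'(c)$ real this forces $\xi_c((c,c+\delta))\subset(-\infty,0)$, i.e.\ $\xi_c'(c)<0$.

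\emph{The jump computation fails even then.} A conformal map preserves orientation, so the $+$ side of the band goes to $\arg\xi=-\pi$. This gives $(n^{2/3}\xi_c)_+^{\sigma_3/4}(n^{2/3}\xi_c)_-^{-\sigma_3/4}=-i\sigma_3$. On the other hand, with $M=\frac1{\sqrt2}\begin{psmallmatrix}1&i\\ i&1\end{psmallmatrix}$ and $J=\begin{psmallmatrix}0&1\\-1&0\end{psmallmatrix}$ one has $MJM^{-1}=-i\sigma_3$. Hence $(E^{(c)}_-)^{-1}E^{(c)}_+=(-i\sigma_3)^2=-I$, not $I$, so your ``direct $2\times2$ computation'' is false at $c$.

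\emph{Where the identity does hold.} At $d$, $\xi_d'(d)>0$ sends the $+$ side of the band to $\arg\xi=\pi$, the boundary-value ratio is $i\sigma_3$, and \eqref{eq:Ed-def} is analytic in $U_d$. That is the standard configuration and needs no reflection.

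\emph{The fix.} The reflection belongs at $c$:
\begin{itemize}
\item take $\xi_c'(c)<0$;
\item replace the constant matrix in \eqref{eq:Ec-def} by its $\sigma_3$-conjugate $\frac1{\sqrt2}\begin{psmallmatrix}1&i\\ i&1\end{psmallmatrix}$;
\item use $\sigma_3A\sigma_3$ in place of $A$ in \eqref{eq:Pc-def}.
\end{itemize}
Then $E^{(c)}$ is jump-free, its singularity at $c$ is removable by your growth argument, and the matching goes through as in Step~2. The paper's proof never checks analyticity of $E^{(c)}$, so the statement itself needs this correction at $c$.
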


\begin{proof}
Substitute \eqref{eq:airy-asympt} into \eqref{eq:Pc-def}--\eqref{eq:Pd-def}.
On $\partial U_c$ and $\partial U_d$ we have $|n^{2/3}\xi_{c,d}|\asymp n^{2/3}$, hence the Airy remainder
$O(\xi^{-3/2})$ becomes $O(n^{-1})$. Choosing $E^{(c)}$ and $E^{(d)}$ as in \eqref{eq:Ec-def}--\eqref{eq:Ed-def}
cancels the constant matrix and power factors from \eqref{eq:airy-asympt}, yielding the matching relations.
\end{proof}

\begin{remark}
The matching rate $O(1/n)$ is the source of the final small-norm estimate $R=I+O(1/n)$ in
Section~\ref{sec:final-ratio}.
\end{remark}

\section{Final ratio problem and error analysis}
\label{sec:final-ratio}

\subsection{Ratio RHP for $R$}
\label{subsec:ratio-small-norm}

Let $\Sigma_R$ denote the resulting jump contour after the $g$-transformation and lens opening:
it consists of the band $\Delta=[c,d]$, the two lens lips, and the boundary circles $\partial U_c$ and
$\partial U_d$ (and, if present, the auxiliary contour $\Gamma$; cf.\ Section~\ref{sec:g-and-lens}).
Define the global piecewise parametrix $\mathcal P$ by
\begin{equation}
\label{eq:global-parametrix}
\mathcal P(\zeta):=
\begin{cases}
P^{(c)}(\zeta), & \zeta\in U_c,\\
P^{(d)}(\zeta), & \zeta\in U_d,\\
N(\zeta), & \zeta\in \C\setminus(\overline U_c\cup \overline U_d),
\end{cases}
\end{equation}
where $N$ is the outer parametrix from Section~\ref{subsec:outer} and
$P^{(c)}$, $P^{(d)}$ are the Airy parametrices from Section~\ref{subsec:airy-parametrices}.

We define the ratio function by
\begin{equation}
\label{eq:def-R}
R(\zeta):=
\begin{cases}
S(\zeta)\,(\mathcal P(\zeta))^{-1}, & \zeta\in \C\setminus \Sigma_R,
\end{cases}
\end{equation}
where $S$ denotes the unknown after the $g$-transformation and lens opening
(Section~\ref{sec:g-and-lens}). Then $R$ satisfies the following ratio RHP.

\begin{problem}[Ratio RHP for $R$]
\label{prob:R-ratio}
Find a $2\times2$ matrix $R$ such that:
\begin{itemize}[leftmargin=2.2em]
\item[(R1)] $R$ is analytic in $\C\setminus\Sigma_R$.
\item[(R2)] $R$ has jumps $R_+=R_-J_R$ on $\Sigma_R$, where
\begin{equation}
\label{eq:JR-def}
J_R(\zeta)=\mathcal P_-(\zeta)\,J_S(\zeta)\,(\mathcal P_+(\zeta))^{-1},
\qquad \zeta\in\Sigma_R,
\end{equation}
and $J_S$ denotes the jump matrix for $S$.
\item[(R3)] $R(\zeta)=I+O(\zeta^{-1})$ as $\zeta\to\infty$.
\end{itemize}
\end{problem}

\subsection{Small-norm estimate and proof of the main theorem}
\label{subsec:small-norm-proof}

\begin{lemma}[Jumps are close to $I$]
\label{lem:R-jumps}
There exists $c_0>0$ such that:
\begin{itemize}[leftmargin=2.2em]
\item[(i)] On the lens lips away from the endpoint disks,
\begin{equation}
\label{eq:JR-lips}
J_R(\zeta)=I+O\!\left(e^{-c_0 n}\right),
\qquad 
\zeta\in \Sigma_{\mathrm{lip}}^{(u)}\cup\Sigma_{\mathrm{lip}}^{(l)}\setminus(U_c\cup U_d).
\end{equation}
\item[(ii)] On the matching circles,
\begin{equation}
\label{eq:JR-circles}
J_R(\zeta)=I+O\!\left(\frac{1}{n}\right),
\qquad \zeta\in\partial U_c\cup\partial U_d.
\end{equation}
\end{itemize}
All estimates are uniform in $\zeta$ on the indicated sets.
\end{lemma}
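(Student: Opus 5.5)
The plan is to compute $J_R=\mathcal P_-J_S\mathcal P_+^{-1}$ from \eqref{eq:JR-def} separately on each part of $\Sigma_R$. The two parts are handled by different inputs: the lens lips outside the disks, and the matching circles $\partial U_c\cup\partial U_d$.

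\textbf{(i) Lens lips.} For $\zeta$ on the lips and outside $\overline U_c\cup\overline U_d$, the global parametrix is $\mathcal P=N$ on both sides, by \eqref{eq:global-parametrix}. Moreover, $N$ is analytic across the lips, since its only jump is on $(c,d)$ by Lemma~\ref{lem:N-properties}. Hence $J_R=N\,J_S\,N^{-1}$ with $N_\pm=N$.

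The jump $J_S$ there is $J^{(u/l)}_{\mathrm{lip}}$ from \eqref{eq:Jlip}, up to inversion, so $J_S-I$ has only a $(1,2)$ entry $\pm e^{-n\ph}$. Lemma~\ref{lem:lip-small}, i.e.\ Lemma~\ref{lem:sign-varphi}, gives $|e^{-n\ph}|\le e^{-c_0 n}$ uniformly on these lip portions.

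It remains to bound $N$ and $N^{-1}$ uniformly there. The lips outside the disks stay at distance $\ge\delta$ from the endpoints $c,d$, where $a(\zeta)$ in \eqref{eq:a-def} is the only possible source of blow-up. Also $a^{\pm1}$ is bounded on the closure of that compact set, including the boundary values up to $(c,d)$, and $\det N\equiv1$. Then
\[
J_R-I=N(J_S-I)N^{-1}=O(e^{-c_0n}),
\]
which is \eqref{eq:JR-lips}.

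\textbf{(ii) Matching circles.} On $\partial U_c$, with the orientation chosen so that the outside is the $-$ side, we have $\mathcal P_-=N$ and $\mathcal P_+=P^{(c)}$. The matrix $S$ has no jump on the circle itself, so $J_S=I$. Thus $J_R=N\,(P^{(c)})^{-1}$, or its inverse depending on orientation. Proposition~\ref{prop:matching} gives $P^{(c)}N^{-1}=I+O(1/n)$ uniformly on $\partial U_c$. Inverting a matrix that is $I+O(1/n)$ gives $I+O(1/n)$, and the same argument applies at $d$. This yields \eqref{eq:JR-circles}.

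\textbf{Main obstacle.} The delicate point is uniformity near the junctions where the lips meet $\partial U_c,\partial U_d$. Strict positivity of $\Re\ph$ degenerates as $\zeta\to c,d$. I would handle this by fixing $\delta$ first and then choosing the lips inside the $\delta$-neighborhood of Lemma~\ref{lem:sign-varphi}. The constant $c_0$ then depends only on $\delta$, and the estimates on the lip portions are taken on a fixed compact set avoiding the endpoints.

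A second point to verify is that $\Gamma$, if it is kept in $\Sigma_R$, plays no role in either (i) or (ii). This holds because it is disjoint from $U_c\cup U_d$ and from the lips.
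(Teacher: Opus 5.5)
Your proposal is correct and follows the same route as the paper. In part (i) you conjugate the exponentially small lip jump of Lemma~\ref{lem:lip-small} by the outer parametrix $N$, which is uniformly bounded (together with $N^{-1}$, since $\det N\equiv 1$) on the lip portions outside $U_c\cup U_d$, and in part (ii) you read the estimate off Proposition~\ref{prop:matching}, correctly noting that depending on orientation the circle jump is $N(P^{(c)})^{-1}$ or its inverse, both of which are $I+O(1/n)$.
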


\begin{proof}
Statement (i) follows from Lemma~\ref{lem:lip-small} and the fact that $\mathcal P$ is uniformly bounded on
compact subsets away from $\Delta$.
Statement (ii) is precisely the matching relations
\eqref{eq:matching-explicit}--\eqref{eq:matching-explicit-d}.
\end{proof}

\begin{theorem}[Small-norm solution]
\label{thm:small-norm}
The ratio RHP (Problem~\ref{prob:R-ratio}) is solvable for all sufficiently large $n$, and its solution
satisfies
\begin{equation}
\label{eq:R-small-norm}
R(\zeta)=I+O\!\left(\frac{1}{n}\right),
\qquad n\to\infty,
\end{equation}
uniformly for $\zeta\in\C\setminus\Sigma_R$.
\end{theorem}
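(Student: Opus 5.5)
The plan is the standard small-norm argument via the Cauchy singular integral operator on $\Sigma_R$. First, I would check that after dividing out $\mathcal P$ the jump of $R$ is trivial on the band $(c,d)$ and on the parts of the lips inside $U_c\cup U_d$. On the band outside the disks, $N$ has the same jump as $S$ by Lemma~\ref{lem:N-properties} and the lens factorization. Inside the disks, $P^{(c)},P^{(d)}$ reproduce the jumps of $S$ exactly, since the Airy matrix \eqref{eq:Airy-matrix} has the standard piecewise-constant jumps and the prefactors $E^{(c)},E^{(d)}$ are analytic. For the latter I would verify that $N\,\frac{1}{\sqrt2}\begin{psmallmatrix}1&-i\\-i&1\end{psmallmatrix}\xi^{\sigma_3/4}$ has no jump on $(c-\delta,c)$, which uses $a_+=ia_-$ and the branch of $\xi^{1/4}$. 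So the effective contour $\Sigma_R'$ consists of the lips outside the disks, the circles $\partial U_c\cup\partial U_d$, and possibly $\Gamma$.

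Second, I would quantify $\|J_R-I\|$ in $L^2\cap L^\infty(\Sigma_R')$. On the lips this is $O(e^{-c_0n})$ by Lemma~\ref{lem:R-jumps}(i). On the circles it is $O(1/n)$ by (ii), i.e.\ by Proposition~\ref{prop:matching}. On $\Gamma$ I would need $J_R-I=O(e^{-cn})$: the jump there is $N\begin{psmallmatrix}1&\widetilde W_n e^{-2ng}e^{n\ell}\\0&1\end{psmallmatrix}N^{-1}$. By Lemma~\ref{lem:En-def} its off-diagonal entry is $\exp(-n\ph+o(n))$, and $\Re\ph\ge c>0$ on $\Gamma$ would follow from (R4) and the maximum principle, in the same way as Lemma~\ref{lem:sign-varphi}. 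The $o(n)$ error is harmless because $\Gamma$ is a fixed compact away from $[A,B]$.

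Third, I would write $R=I+C_-(\mu)$ type representation: set $\Delta_R:=J_R-I$ and $C_{\Delta_R}f:=C_-(f\Delta_R)$. Since $\Sigma_R'$ is a finite union of fixed smooth curves, $C_-$ is bounded on $L^2(\Sigma_R')$ with an $n$-independent norm, so $\|C_{\Delta_R}\|\le C\|\Delta_R\|_\infty=O(1/n)<1$ for large $n$. Then $\mu=I+(1-C_{\Delta_R})^{-1}C_{\Delta_R}I$ exists, with $\|\mu-I\|_{L^2}=O(1/n)$, and
\[
R(\zeta)=I+\frac{1}{2\pi i}\int_{\Sigma_R'}\frac{\mu(s)\Delta_R(s)}{s-\zeta}\,ds
\]
solves Problem~\ref{prob:R-ratio}. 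Uniqueness follows by the Liouville argument of Lemma~\ref{lem:Y-unique}, using $\det J_R=1$.

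Finally, the pointwise bound $R-I=O(1/n)$ holds directly for $\zeta$ at positive distance from $\Sigma_R'$. To get uniformity up to the contour, I would deform $\Sigma_R'$ slightly, which is allowed because $J_R$ extends analytically to a neighborhood of the circles and lips, and then apply the same estimate. The main obstacle is the first and $\Gamma$ steps: making sure that no residual jumps of size only $e^{o(n)}$ survive. This requires the exact jump cancellation inside the disks and the positivity of $\Re\ph$ on $\Gamma$ dominating the subexponential factor $\mathcal E_n$. I would handle $\Gamma$ first by choosing $\mathcal D$ close to $[A,B]$, where (R4) gives a uniform margin.
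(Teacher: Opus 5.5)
Your overall scheme (contraction of $C_{\Delta_R}$ from $\|J_R-I\|_{L^\infty}=O(1/n)$ on a fixed contour, the Cauchy-integral formula for $R$, Liouville for uniqueness) is the same as the paper's. The paper's proof simply reads off $\|J_R-I\|_{L^\infty(\Sigma_R)}=O(1/n)$ from Lemma~\ref{lem:R-jumps}, although that lemma covers only the lips outside $U_c\cup U_d$ and the circles. You are right that the band and $\Gamma$ need separate treatment, but the arguments you give for them do not work.

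\textbf{The band.} There is no lens factorization for the $S$ of \eqref{eq:T-to-Sg}. By Lemma~\ref{lem:compression}, $\widetilde T$ is analytic inside $\Gamma_n$, so the only jump of $S$ on $(c,d)$ is the diagonal matrix $e^{-n(g_+-g_-)\sigma_3}$. Multiplying by the upper-triangular factors \eqref{eq:Jlip} keeps it upper triangular, so it can never coincide with $N_-^{-1}N_+=\begin{psmallmatrix}0&1\\-1&0\end{psmallmatrix}$. In fact $J_R=N_-MN_-^{-1}$ on $(c,d)\setminus(\overline U_c\cup\overline U_d)$, where $M_{21}=e^{n(g_+-g_-)}$ has modulus one and $M_{22}=0$. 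Hence $J_R-I$ does not tend to $0$ there.

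What is missing is a step that puts the weight on $[c,d]$, so that the band jump becomes $\begin{psmallmatrix}e^{-n(g_+-g_-)}&1\\0&e^{n(g_+-g_-)}\end{psmallmatrix}$. That matrix factors as $\begin{psmallmatrix}1&0\\e^{n(g_+-g_-)}&1\end{psmallmatrix}\begin{psmallmatrix}0&1\\-1&0\end{psmallmatrix}\begin{psmallmatrix}1&0\\e^{-n(g_+-g_-)}&1\end{psmallmatrix}$, with lower-triangular lens factors rather than those of \eqref{eq:Jlip}. Since $W_n=L_n/\omega_n$ has a pole at every node, this transfer is not a contour deformation, and neither your proposal nor the paper's proof provides it.

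A smaller slip inside $U_c$: with $\xi_c'(c)>0$ as in \eqref{eq:xi-cd}, the cut of $\xi_c^{1/4}$ falls on $(c-\delta,c)$. There both $N$ and $a$ are analytic ($a_+=ia_-$ holds only on $(c,d)$), so $E^{(c)}$ of \eqref{eq:Ec-def} is not analytic. At the left endpoint one needs $\xi_c'(c)<0$, so that the band is sent to $(-\infty,0)$ and the two jumps cancel on $(c,c+\delta)$.

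\textbf{The contour $\Gamma$.} First, the conjugation in \eqref{eq:T-to-Sg} produces the $(1,2)$ entry $e^{2ng-n\ell}\widetilde W_n$, with $e^{+2ng}$. Lemma~\ref{lem:En-def} turns this into $\exp\bigl(n(2g+\tfrac12\Van-\ell)+o(n)\bigr)$, not $\exp(-n\ph+o(n))$.

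Second, and independently, the inequality $\Re\ph\ge c>0$ on $\Gamma$ that you want from (R4) and the maximum principle is false for every Jordan curve $\Gamma$ surrounding $[A,B]$. Off $[A,B]$ one has $\Re\ph=2U^{\mu_*}+V-\ell$, with $V(\zeta)=-2\int_A^B\log|\zeta-t|\,\kappa(t)\,dt$. This function is continuous and superharmonic on $\C$ (its Laplacian is $-4\pi(\mu_*+\kappa\,dx)$) and vanishes on $[c,d]\subset\mathcal D$. The minimum principle therefore forces $\min_\Gamma\Re\ph<0$.

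Locally, $\Re\ph(x\pm iy)=-2\pi\bigl(\mu_*'(x)+\kappa(x)\bigr)y+o(y)$ as $y\downarrow0$ for $x\in(c,d)$. So shrinking $\mathcal D$ toward $[A,B]$ puts $\Gamma$ precisely where $\Re\ph<0$; (R4) controls only the real void intervals. The analogy with Lemma~\ref{lem:sign-varphi} gives no support either, since the same computation shows $\Re\ph<0$ just above and below the band, where lens lips close to $[c,d]$ lie. So the $\Gamma$ component also needs an argument your proposal does not contain.
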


\begin{proof}
By Lemma~\ref{lem:R-jumps}, we have $J_R-I\in L^\infty(\Sigma_R)$ with
\[
\|J_R-I\|_{L^\infty(\Sigma_R)}=O(1/n),
\]
since the exponentially small contributions are negligible compared to $1/n$.
Moreover, $J_R-I$ is supported on a fixed contour and is uniformly bounded in $L^2(\Sigma_R)$ 
as well~\cite{DeiftBook,DeiftZhou}.
Therefore, the associated singular integral operator for the ratio RHP has norm $O(1/n)$ 
on $L^2(\Sigma_R)$, so for $n$ large it is a contraction. 

Since $\Sigma_R$ is a finite union of piecewise $C^{1}$ arcs, the Cauchy singular integral operators on $\Sigma_R$
are bounded on $L^{2}(\Sigma_R)$.
Together with $J_R-I\in L^{2}(\Sigma_R)\cap L^{\infty}(\Sigma_R)$ and $\|J_R-I\|_{L^{2}\cap L^{\infty}}=O(1/n)$,
this shows that the associated Beals--Coifman operator is a contraction for $n$ large.

The standard small-norm theorem (see, e.g., \cite{DeiftBook,DeiftZhou}) 
yields existence and uniqueness of $R$, together with \eqref{eq:R-small-norm}. 
Uniformity away from $\Sigma_R$ follows from the Cauchy integral representation for $R$.
\end{proof}

\begin{remark}[Applicability of the small-norm theorem]
\label{rem:small-norm-applicability}
Here $\Sigma_R$ is a finite union of piecewise $C^{1}$ arcs (the boundaries of the endpoint disks together with
the remaining portions of the lens lips), hence it is a Carleson contour.
Moreover, the jump matrix $J_R$ satisfies $J_R-I\in L^{2}(\Sigma_R)\cap L^{\infty}(\Sigma_R)$ with
$\|J_R-I\|_{L^{2}\cap L^{\infty}}=O(1/n)$ on the circle components and $O(e^{-cn})$ on the lens lips outside
$U_c\cup U_d$.
Therefore the associated Beals--Coifman operator is well-defined and is a contraction for $n$ large, so the
standard small-norm Riemann--Hilbert theory applies (see, e.g., \cite{DeiftBook,DeiftZhou}).
\end{remark}

\begin{proof}[Proof of Theorem~\ref{thm:main}]
The estimate \eqref{eq:R-small-norm} yields $R(\zeta)=I+O(1/n)$. Reverting the explicit sequence of
transformations described in Section~\ref{subsec:outline} and using the definition of $\mathcal P$ in
\eqref{eq:global-parametrix} gives the corresponding strong asymptotics for $Y$. 
Finally, by \eqref{eq:recover-PQ} we recover the strong asymptotics 
for the multipoint Pad\'e polynomials $(P_n,Q_n)$.
\end{proof}

\end{document}